\numberwithin{equation}{section}
\newcounter{mnote}
\let\oldmarginpar\marginpar
\renewcommand\marginpar[1]{\-\oldmarginpar[\raggedleft\footnotesize #1]%
{\raggedright\footnotesize #1}}
\newtheorem{theorem}{Theorem}[section]
\newtheorem{lemma}[theorem]{Lemma}
\newtheorem{corollary}[theorem]{Corollary}
\newtheorem{example}[theorem]{Example}
\newtheorem{remark}[theorem]{Remark}
\newcommand{\normmm}[1]{{\left\vert\kern-0.25ex\left\vert\kern-0.25ex\left\vert #1
\right\vert\kern-0.25ex\right\vert\kern-0.25ex\right\vert}}
\newcommand{\dx}{\,{\rm d}x}
\newcommand{\ds}{\,{\rm d}s}
\newcommand{\dd}{\,{\rm d}}
\newcommand{\curl}{\operatorname{curl}}
\renewcommand{\div}{\operatorname{div}}
\newcommand{\tr}{\operatorname{tr}}
\newcommand{\rot}{\operatorname{rot}}
\newcommand{\skw}{\operatorname{skw}}
\newcommand{\Oplus}{\ensuremath{\vcenter{\hbox{\scalebox{1.5}{$\oplus$}}}}}
\begin{document}



\title[Nonconforming Finite Element Method for SGE]{An optimal and robust nonconforming finite element method for the strain gradient elasticity}
\author{Jianguo Huang}%
\address{School of Mathematical Sciences, and MOE-LSC, Shanghai Jiao Tong University, Shanghai 200240, China}%
\email{jghuang@sjtu.edu.cn}%
\author{Xuehai Huang}%
\address{School of Mathematics, Shanghai University of Finance and Economics, Shanghai 200433, China}%
\email{huang.xuehai@sufe.edu.cn}%
\author{Zheqian Tang}%
\address{School of Mathematics, Shanghai University of Finance and Economics, Shanghai 200433, China}%
\email{tangzq0329@163.com}%

\thanks{The second author is the corresponding author. 
The first author was partially supported by NSFC (Grant No.\ 12071289).
The second author was supported by the National Natural Science Foundation of China (Grant No.\ 12171300). 
}
\keywords{Strain Gradient Elasticity Model; Nonconforming Finite Element Method; Nitsche's Technique; Finite Element Complex; Optimal and Robust Error Analysis.}
\makeatletter
\@namedef{subjclassname@2020}{\textup{2020} Mathematics Subject Classification}
\makeatother
\subjclass[2020]{
58J10;   
65N30;   
65N12;   
65N22;   
}

\begin{abstract}
An optimal and robust low-order nonconforming finite element method is developed for the strain gradient elasticity (SGE) model in arbitrary dimensions. An $H^2$-nonconforming quadratic vector-valued finite element in arbitrary dimensions is constructed,
which together with the Nitsche's technique, is applied for solving the SGE model. The resulting nonconforming finite element method is optimal and robust with respect to the Lam\'{e} coefficient $\lambda$ and the size parameter $\iota$, as confirmed by numerical results. Additionally, nonconforming finite element discretization of the smooth Stokes complex in two and three dimensions is devised.
\end{abstract}
\maketitle

\section{Introduction}
In this paper we shall develop an optimal and robust nonconforming finite element method for the strain gradient elasticity (SGE) model on a bounded polytopal domain $\Omega\subset\mathbb{R}^d$ ($d\geq 2$)
\begin{equation}\label{SGE0}
\begin{cases}
-\div ((\boldsymbol{I}-\iota^{2}\Delta)\boldsymbol{\sigma}(\boldsymbol{u}))=\boldsymbol{f} &\mbox{in} \ \Omega,\\
\boldsymbol{u}=\partial_{n}\boldsymbol{u}=\boldsymbol{0} &\mbox{on} \ \partial\Omega,
\end{cases}
\end{equation}
where $\boldsymbol{f}\in L^2(\Omega;\mathbb R^d)$ is the applied force, $\boldsymbol{u}=(u_1, \ldots, u_d)^\intercal$ is the displacement
field, $\partial_{n}\boldsymbol{u}$ is the normal derivative of $\boldsymbol{u}$, the stress $\boldsymbol{\sigma}(\boldsymbol{u})=2\mu\boldsymbol{\varepsilon}(\boldsymbol{u})+\lambda(\div\boldsymbol{u})\boldsymbol{I}$, and
$\boldsymbol{\varepsilon}(\boldsymbol{u})=(\varepsilon_{ij}(\boldsymbol{u}))_{d\times d}$ is the strain tensor field with
$\varepsilon_{ij}(\boldsymbol{u})=(\partial_j u_i+\partial_i u_j)/2$. Here $\lambda$ and $\mu$ are the Lam\'{e}
constants, $\boldsymbol{I}$ is the identity tensor field, and $\iota\in(0,1]$ denotes the size parameter of the material
under discussion.
The SGE model \eqref{SGE0}, proposed by Aifantis et al. \cite{altan1992structure,ru1993simple}, can be viewed as a simplification of the broader SGE models discussed in \cite{mindlin1964micro}, as it involves only one additional size parameter $\iota$, alongside the Lam\'{e} constants $\lambda$ and $\mu$. This SGE model effectively eliminates the strain singularity at the tip of brittle cracks \cite{exadaktylos1996two}. 

The SGE model \eqref{SGE0} is a fourth-order strain gradient perturbation of linear elasticity with a small size parameter $\iota$. When $\iota$ approaches $0$, it reduces to the second-order linear elasticity problem. However, the solution of this limiting problem fails to satisfy the normal derivative boundary condition, leading to boundary layers.
Furthermore, as $\lambda$ tends to infinity, the elastic material approaches nearly incompressibility $\div\boldsymbol{u}=0$, which triggers the volume locking phenomenon of low-order conforming finite element methods.
Consequently, constructing $H^2$ finite elements to develop robust numerical methods for the SGE model \eqref{SGE0}, which account for both the size parameter $\iota$ and the Lam\'{e} constant $\lambda$, is highly challenging.

Several $H^2$-conforming finite element methods \cite{akarapu2006numerical, papanicolopulos2009three, torabi2018c1, zervos2001modelling, zervos2009two} and nonconforming finite element methods \cite{MR3712289, MR4296093, MR4021023, MR3912981} are designed for the SGE model~\eqref{SGE0}. 
However, there is no error analysis or the error estimates are not robust with respect to the Lam\'{e} constant $\lambda$ in these papers.
To address this issue, Ming and his collaborators \cite{MR4549866} reformulate \eqref{SGE0} as a fourth-order strain gradient perturbation of the Lam\'e system by introducing the pressure field $p=\lambda\div\boldsymbol{u}$, 
and robust nonconforming mixed finite element methods based on this idea have been developed in \cite{MR4650917,MR4549866,LiaoMing2025} for SGE models.
In \cite{tianshudan}, a family of robust nonconforming finite element methods with the reduced integration technique has been designed for the primal formulation of problem~\eqref{SGE0} in two dimensions.
The robust error estimates in the energy norm in \cite{tianshudan,MR4650917,MR4549866} are only $O(h^{1/2})$, which is sharp but suboptimal.

Recently, a robust quartic nonconforming finite element method with optimal convergence was developed in \cite{ChenHuangHuang2025} for problem~\eqref{SGE0} in two dimensions. However, this approach requires both boundary conditions, 
$\boldsymbol{u}=0$ and $\partial_n\boldsymbol{u}=0$, to be imposed weakly via Nitsche's technique \cite{MR0341903,Schieweck2008,MR2917211}. The double application of Nitsche's technique complicates the discrete scheme. Furthermore, to the best of our knowledge, a robust nonconforming finite element method with optimal convergence for the primal formulation of \eqref{SGE0} in three dimensions is still unavailable.

In this paper, we aim to develop an optimal and robust low-order nonconforming finite element method tailored for the primal formulation of problem~\eqref{SGE0} in arbitrary dimensions. The key to craft these robust finite element methods lies in the construction of the finite element discretization of the short complex
\begin{equation}\label{complexdivpart}
H_0^2(\Omega;\mathbb{R}^d) \xrightarrow{\div} H_0^1(\Omega)\cap L_0^2(\Omega) \xrightarrow {}0.
\end{equation}
The conforming finite element discretization for the short complex \eqref{complexdivpart} has been meticulously constructed in two and three dimensions, as detailed in \cite{chen2022finite,MR4654617}. However, a limitation of $C^1$-conforming elements lies in their extensive degrees of freedom (DoFs) and the necessity for high-order polynomials in the shape functions.
To this end, we construct an $H^2$-nonconforming quadratic finite element in any dimension. Its shape function space $V(T)$ is a bubble function enrichment of the full quadratic polynomial space such that $Q(T):=\div V(T)=\mathbb{P}_{1}(T)\oplus{\rm span}\{b_T^{\textrm{NC}}\}$, where $b_T^{\textrm{NC}} = 2-(d+1)\sum^{d}_{i=0}\lambda_{i}^{2}$ is the nonconforming bubble function. A useful feature of the nonconforming construction is that boundary conditions can be imposed flexibly through different boundary DoFs. If all boundary DoFs associated with the function values and first-order derivatives are imposed, the resulting space gives a nonconforming discretization of $H_0^2(\Omega;\mathbb R^d)$; see Remark~\ref{remark-V-h0}. In the method analyzed in this paper, only the DoFs corresponding to the function-value boundary condition are imposed, and hence the global finite element space $V_h$ is used to discretize $H^2(\Omega;\mathbb R^d)\cap H_0^1(\Omega;\mathbb R^d)$. The $H^1$-nonconforming finite element space $Q_h$ associated with $Q(T)$ is employed to discretize $H^1(\Omega)\cap L_0^2(\Omega)$.
The global finite element space $V_h$ of $V(T)$ is $H^2$-nonconforming, but $H(\div)$-conforming.
The nonconforming finite element spaces $V_h$ and $Q_h$ form the discrete exact complex
\begin{equation}\label{femcomplexdivpart}
V_h \xrightarrow{\div} Q_h \xrightarrow {}0,
\end{equation}
which is a nonconforming discretization of the short complex
\begin{equation}\label{complexdivpart0}
H^2(\Omega;\mathbb{R}^d)\cap H_0^1(\Omega;\mathbb{R}^d) \xrightarrow{\div} H^1(\Omega)\cap L_0^2(\Omega) \xrightarrow {}0.
\end{equation}

For sufficiently smooth domains, for example when $\partial\Omega$ is of class $C^2$, the existence of a bounded right inverse of the divergence operator \cite[Theorem 3.1]{DanchinMucha2013} implies that the continuous complex \eqref{complexdivpart0} is exact. On nonsmooth domains, however, this exactness may fail. In two dimensions, the failure is related to corner compatibility conditions for the regular divergence problem on polygonal domains; see \cite{ArnoldScottVogelius1988}. Indeed, if $\boldsymbol{v}\in H^2(\Omega;\mathbb R^2)\cap H_0^1(\Omega;\mathbb R^2)$, then its tangential derivatives vanish along boundary edges. At a boundary corner, the two tangential directions are linearly independent, which imposes an additional compatibility condition on $\div\boldsymbol v$. A related corner difficulty for the SGE model with homogeneous simply supported boundary conditions is treated in \cite{LiaoMing2025} by a broken Hardy inequality. By contrast, since $V_h$ and $Q_h$ are nonconforming, no corner compatibility restrictions arise in two dimensions, nor do the analogous compatibility restrictions associated with codimension-two boundary faces arise in higher dimensions. This is one advantage of the nonconforming construction.

With the nonconforming finite element spaces $V_h$ and $Q_h$, we advance an optimal low-order nonconforming finite element method for problem~\eqref{SGE0}, which is robust with respect to the Lam\'{e} constant $\lambda$ and the size parameter $\iota$. Our approach employs Nitsche's technique to weakly enforce the boundary condition 
$\partial_n\boldsymbol{u}=0$, while the boundary condition 
$\boldsymbol{u}=0$ is inherently incorporated into the finite element space 
$V_h$ in a strong manner.
In this sense, the combination of the discrete complex \eqref{femcomplexdivpart} with Nitsche's method preserves the relevant structure of the short complex \eqref{complexdivpart} at the discrete level.

Based on the Brezzi-Douglas-Marini (BDM) element interpolation \cite{MR4458899,ChenChenHuangWei2024,BrezziDouglasMarini1986,BrezziDouglasDuranFortin1987,nedelec1986new},
we build up interpolation operators $I_h^V: H^1_0(\Omega;\mathbb{R}^d)\rightarrow V_h$ and $I_h^Q: L_0^2(\Omega)\rightarrow Q_{h}$ satisfying the commutative property
\[
\div I_h^V\boldsymbol{v}=I_h^Q\div\boldsymbol{v} \quad \forall \ \boldsymbol{v}\in H^1_0(\Omega;\mathbb{R}^d).
\]
Utilizing this commutative property and interpolation error estimates, we derive optimal and robust error estimates for the proposed nonconforming finite element method, especially the error in the energy norm $\mathopen{\interleave}\boldsymbol{u}-\boldsymbol{u}_h\mathclose{\interleave}_{\iota,\lambda,h}=\min\{O(h), O(\iota^{1/2}+h^{2})\}$, regardless of the presence of boundary layers. 

To illustrate the applicability of the method, we also briefly discuss the SGE model with mixed boundary conditions, while the rigorous error analysis is restricted to the fully clamped problem \eqref{SGE0}. In addition, the finite element spaces $V_h$ and $Q_h$ can be applied to nonconforming finite element discretizations of the smooth Stokes complexes in two and three dimensions.

The rest of this paper is organized as follows. In Section \ref{sec2}, we introduce some notation, bubble functions and the uniform regularity results for the SGE model. 
A new $H^2$-nonconforming finite element in any dimension and the corresponding interpolation are constructed in Section \ref{sec3}. 
In Section \ref{sec4}, we develop and analyze an optimal and robust nonconforming finite element method. 
Some numerical results are given in Section \ref{sec5} to confirm the theoretical results.
Finally, Appendix~\ref{app:complexes} provides the detailed construction of the nonconforming finite element discretizations of the smooth Stokes complexes in two and three dimensions.

\section{Preliminaries}\label{sec2}
\subsection{Notation}
Let $\Omega\subset\mathbb{R}^d$ ($d\geq 2$) be a bounded polytope with boundary $\partial\Omega$.
Given a bounded domain $D$ and an integer $m\geq 0$, denote by $H^m(D) $ the standard Sobolev space on $D$ with norm $\|\cdot\|_{m,D}$ and semi-norm $|\cdot |_{m,D}$, and $H_0^m(D)$ the closure of $C_0^\infty(D)$ with respect to $\|\cdot\|_{m,D}$. 
The notation $(\cdot,\cdot)_D$ denotes the $L^2$ inner product on $D$. For a Banach space $V$, we denote by $V'$ its dual space. For $D= \Omega$, we abbreviate $\|\cdot \|_{m, D}$, $|\cdot |_{m, D}$ and $( \cdot , \cdot ) _D$ as $\|\cdot \|_m, |\cdot |_m$ and $(\cdot,\cdot)$, respectively. Let $\mathbb{P}_k(D)$ be the set of all polynomials on $D$ with the total degree up to the nonnegative integer $k$. In addition, set $\mathbb{B}(D;\mathbb{R}^d):=\mathbb{B}(D)\otimes\mathbb{R}^d$.
Referring to \cite{GiraultRaviart1986}, the Sobolev spaces
$H(\curl, D)$, $H_0(\curl, D)$, $H(\div, D)$, $H_0(\div, D)$ and $L^2_
0(D)$ are defined in the standard way.
Denote by $Q_{k,D}$ the standard $L^2$ projection operator from $L^2(D)$ to $\mathbb{P}_k(D)$, whose vectorial/tensorial version is also denoted by $Q_{k,D}$ if there is no confusion.

Denote by $\mathcal{T}_h=\{T\}$ a conforming triangulation of $\Omega$ with each element being a simplex, where $h:=\max_{T\in\mathcal{T}_h}h_T$ and $h_T=\mathrm{diam}(T)$.
Let $\mathcal{F}_h$, $\mathring{\mathcal{F}}_h$, $\mathcal{E}_h$, $\mathring{\mathcal{E}}_h$, $\mathcal{V}_h$ and $\mathring{\mathcal{V}}_h$ be the set of all $(d-1)$-dimensional faces, interior $(d-1)$-dimensional faces, $(d-2)$-dimensional faces, interior $(d-2)$-dimensional faces, vertices and interior vertices, respectively.
Set $\mathcal F_h^{\partial}:=\mathcal{F}_h\backslash\mathring{\mathcal{F}}_h$, $\mathcal E_h^{\partial}:=\mathcal{E}_h\backslash\mathring{\mathcal{E}}_h$ and $\mathcal V_h^{\partial}:=\mathcal{V}_h\backslash\mathring{\mathcal{V}}_h$.
We next introduce patches for later use. 
For the $\ell$-dimensional face $f\in\Delta_{\ell}(\mathcal{T}_h)$,
write $\omega_f$ to be the union of all simplices in $\mathcal{T}_f$, where $\mathcal{T}_f$ is the set of all simplices 
in $\mathcal{T}_h$ sharing the common $\ell$-dimensional face $f$. 
For a finite set $A$, denote by $\#A$ its cardinality.
For $F\in\mathring{\mathcal{F}}_h$, which is shared by two simplices $T^+$ and $T^-$ in $\mathcal{T}_F$, denote by $\boldsymbol{n}^+$ and $\boldsymbol{n}^-$ the unit outward normal to $T^+$ and $T^-$, respectively.
We preset the unit normal vector of $F$ by $\boldsymbol{n}_F=\boldsymbol{n}^+$.
Define the jump on $F$ as $[\![\boldsymbol{v}]\!]|_F:=\boldsymbol{v}|_{T^+}-\boldsymbol{v}|_{T^-}$. We also write $[\![\boldsymbol{v}]\!]|_F:=\boldsymbol{v}|_F$ for all $F\in\mathcal F_h^{\partial}$.


For a non-degenerate $d$-dimensional simplex $T$ and $\ell = 0,1,\ldots,d$, denote by $\Delta_{\ell}(T)$ the set of all $\ell$-dimensional subsimplices of $T$. 
Elements of $\Delta_0(T)$ are $d+1$ vertices $\texttt{v}_0, \texttt{v}_1,\ldots, \texttt{v}_d$ of $T$ and $\Delta_d(T)=\{T\}$.
For each $f\in\Delta_{\ell}(T)$ with $0\leq \ell\leq d$, choose an orthonormal basis $\boldsymbol{t}_1^f,\ldots,\boldsymbol{t}_{\ell}^f$ for the tangent space of $f$, and an orthonormal basis $\boldsymbol{n}_1^f,\ldots,\boldsymbol{n}_{d-\ell}^f$ for the normal space of $f$.
We abbreviate $\boldsymbol{t}^e_1$ as $\boldsymbol{t}_e$ or $\boldsymbol{t}$ when $\ell = 1$, and $\boldsymbol{n}^F_1$ as $\boldsymbol{n}_F$ or $\boldsymbol{n}$ when $\ell = d-1$. We also abbreviate $\boldsymbol{n}^f_i$
and $\boldsymbol{t}^f_i$ as $\boldsymbol{n}_i$ and $\boldsymbol{t}_i$, respectively, if not causing any confusion. 
For a surface $F$ and $e\in\partial F$, denote by $\boldsymbol{n}_{F,e}$ the unit vector tangent to $F$ and outward normal to $\partial F$ along $e$.
For $i,j=0, 1, \ldots, d$, set $\boldsymbol{t}_{i,j}:=\texttt{v}_j-\texttt{v}_i$.

Given a face $F\in\Delta_{d-1}(T)$ and a vector $\boldsymbol{v}\in\mathbb{R}^d$, define 
\begin{align*}
\Pi_F\boldsymbol{v} = (\boldsymbol{I}-\boldsymbol{n}_F\boldsymbol{n}^\intercal_F)\boldsymbol{v}
\end{align*}
as the projection of $\boldsymbol{v}$ onto the face $F$. Then
\begin{align*}
\div\boldsymbol{v} = \div((\boldsymbol{n}_F\boldsymbol{n}^\intercal_F)\boldsymbol{v}+\Pi_F\boldsymbol{v}) = \partial_{\boldsymbol{n}_F}(\boldsymbol{v}\cdot\boldsymbol{n}_F) + \div_F\boldsymbol{v},
\end{align*}
where the surface divergence is denoted by $\div_F\boldsymbol{v} := \div(\Pi_F\boldsymbol{v})$.
For a $d\times d$ matrix $A$, we denote the skew-symmetric part of $A$ as
\[\skw A:=\dfrac{1}{2}(A-A^\intercal).\]
Denote by $\mathbb{K}$ the space of all skew-symmetric $d\times d$ matrices.

We use $\nabla_h$ and $\boldsymbol{\varepsilon}_h$ to represent the elementwise version of $\nabla$ and $\boldsymbol{\varepsilon}$ with respect to $\mathcal T_h$.
For a piecewise smooth vector-valued function $\boldsymbol{v} \in H(\div,\Omega)$, define the broken stress as follows:
\[
\boldsymbol{\sigma}_h(\boldsymbol{v})=2\mu\boldsymbol{\varepsilon}_h(\boldsymbol{v})+\lambda(\div\boldsymbol{v})\boldsymbol{I}.
\]
For $s\geq 1$, define $H^s(\mathcal{T}_h;\mathbb{R}^d):=H^s(\mathcal{T}_h)\otimes\mathbb{R}^d$, where
\[
H^s(\mathcal{T}_h):=\{v\in L^2(\Omega): v|_T\in H^s(T) \quad \forall \ T\in\mathcal{T}_h\}.
\]
For a piecewise smooth scalar, vector-valued or tensor-valued function $v$, define the broken squared norm and seminorms, for $s\geq 1$, by
\[
\|v\|_{s,h}^2:=\sum_{T\in\mathcal{T}_h}\|v\|^2_{s,T}, \;\; |v|_{s,h}^2:=\sum_{T\in\mathcal{T}_h}|v|^2_{s,T},\;\;
\mathopen{\interleave} v\mathclose{\interleave}^2_{1,h}:=|v|^2_{1,h}+\sum_{F\in\mathcal F_h^{\partial}}h^{-1}_F\|v\|^2_{0,F}.
\]
For $\boldsymbol{v}\in H^s(\mathcal{T}_h;\mathbb{R}^d)\cap H(\div,\Omega)$ with $s\geq2$, 
we introduce the additional discrete norms
\begin{align*}
\|\boldsymbol{v}\|^2_{\iota,\lambda,h}&:=2\mu\|\boldsymbol{\varepsilon}_h(\boldsymbol{v})\|^2_{0}+\lambda\|\div\boldsymbol{v}\|^2_{0}+\iota^2(2\mu|\boldsymbol{\varepsilon}_h(\boldsymbol{v})|^2_{1,h}+\lambda|\div\boldsymbol{v}|^2_{1,h}),\\
\mathopen{\interleave}\boldsymbol{v}\mathclose{\interleave}^2_{\iota,\lambda,h}&:=2\mu\|\boldsymbol{\varepsilon}_h(\boldsymbol{v})\|^2_{0}+\lambda\|\div\boldsymbol{v}\|^2_{0}+\iota^2(2\mu\mathopen{\interleave}\boldsymbol{\varepsilon}_h(\boldsymbol{v})\mathclose{\interleave}^2_{1,h} + \lambda\mathopen{\interleave}\div\boldsymbol{v}\mathclose{\interleave}^2_{1,h}).
\end{align*}
For $\boldsymbol v\in H^m(\Omega;\mathbb R^d)$ with $m\geq2$, we use the shorthand $\|\boldsymbol v\|_{\iota,\lambda}:=\|\boldsymbol v\|_{\iota,\lambda,h}$.
Furthermore, we define the broken $H^1$ inner product
\begin{equation*}
(u, v)_{1,h}:=(\nabla_hu,\nabla_hv)-\sum_{F\in\mathcal F_h^{\partial}}(u,\partial_{n}v)_F-\sum_{F\in\mathcal F_h^{\partial}}(\partial_{n}u,v)_F + \eta\sum_{F\in\mathcal F_h^{\partial}}h^{-1}_F(u, v)_F.
\end{equation*}
The penalty constant $\eta$ will be chosen to be sufficiently large to ensure the coercivity of the broken $H^1$ inner product $(\cdot, \cdot)_{1,h}$ on piecewise polynomial spaces.

In this paper, we use $\lesssim$ to represent $\leq C$, where $C$ is a generic positive constant
independent of the mesh size $h$, the size parameter $\iota$ and the Lam\'{e} constant $\lambda$.
And $a\eqsim b$ means $a\lesssim b\lesssim a$. 

\begin{lemma}
For $s\geq 1$, it holds
\begin{equation}\label{eq:brokenH1seminormequiv}
|v|^2_{1,h}+\sum_{F\in\mathcal F_h}h^{-1}_F\|[\![v]\!]\|^2_{0,F}\eqsim |v|^2_{1,h}+\sum_{F\in\mathcal F_h}h^{-1}_F\|Q_{0,F}[\![v]\!]\|^2_{0,F}\quad\forall~v\in H^s(\mathcal{T}_h).
\end{equation}
\end{lemma}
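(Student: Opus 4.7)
The inequality to prove has two directions; the easy one is $\gtrsim$ since $Q_{0,F}$ is an $L^2$-orthogonal projection onto constants on $F$, so $\|Q_{0,F}[\![v]\!]\|_{0,F}\leq\|[\![v]\!]\|_{0,F}$ pointwise in $F$, which immediately yields the right-hand side bounded by the left-hand side. Thus my plan focuses on the nontrivial direction $\lesssim$, namely controlling the full jump by its mean plus the broken $H^1$-seminorm.

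The main idea is to split the jump orthogonally as $[\![v]\!]=Q_{0,F}[\![v]\!]+([\![v]\!]-Q_{0,F}[\![v]\!])$, which is orthogonal in $L^2(F)$, so $\|[\![v]\!]\|_{0,F}^2=\|Q_{0,F}[\![v]\!]\|_{0,F}^2+\|[\![v]\!]-Q_{0,F}[\![v]\!]\|_{0,F}^2$. It then suffices to show that for every $F\in\mathcal F_h$ one has
\begin{equation*}
h_F^{-1}\|[\![v]\!]-Q_{0,F}[\![v]\!]\|_{0,F}^2\lesssim \sum_{T\in\mathcal T_F}|v|_{1,T}^2,
\end{equation*}
after which summing over $F$ and using the bounded overlap of the patches $\{\mathcal T_F\}_{F\in\mathcal F_h}$ (each simplex has at most $d+1$ faces) delivers the desired bound.

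To establish this local estimate, I would write $[\![v]\!]|_F=v^+-v^-$ on an interior face (with $v^\pm:=v|_{T^\pm}$, and with only one side for boundary faces), and bound
\begin{equation*}
\|[\![v]\!]-Q_{0,F}[\![v]\!]\|_{0,F}\leq \|v^+-Q_{0,F}v^+\|_{0,F}+\|v^--Q_{0,F}v^-\|_{0,F}.
\end{equation*}
For each term, I would exploit the optimality of $Q_{0,F}$ among $L^2(F)$-projections onto constants, namely $\|v^\pm-Q_{0,F}v^\pm\|_{0,F}\leq\|v^\pm-Q_{0,T^\pm}v^\pm\|_{0,F}$, and then combine the scaled trace inequality $\|w\|_{0,F}^2\lesssim h_T^{-1}\|w\|_{0,T}^2+h_T|w|_{1,T}^2$ with the standard Poincaré/approximation estimate $\|w-Q_{0,T}w\|_{0,T}\lesssim h_T|w|_{1,T}$ applied to $w=v^\pm$, yielding $\|v^\pm-Q_{0,F}v^\pm\|_{0,F}^2\lesssim h_T|v|_{1,T^\pm}^2\eqsim h_F|v|_{1,T^\pm}^2$ under shape-regularity.

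I do not anticipate a hard step: the proof is essentially a combination of $L^2$-orthogonality, the classical scaled trace inequality, and Poincaré's inequality on a simplex. The only minor subtlety is treating boundary and interior faces uniformly, which is handled by the convention $[\![v]\!]|_F=v|_F$ for $F\in\mathcal F_h^\partial$ stated earlier in the notation section, so that the same local estimate applies with only one element $T^+$ contributing. The final norm equivalence constants depend only on shape-regularity, as required for the $\eqsim$ notation.
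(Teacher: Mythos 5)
Your proposal is correct and follows essentially the same route as the paper: the nontrivial direction reduces to the estimate $h_F^{-1}\|[\![v]\!]-Q_{0,F}[\![v]\!]\|_{0,F}^2\lesssim\sum_{T\in\mathcal T_F}|v|_{1,T}^2$, which the paper invokes as "the estimate of $Q_{0,F}$" and which you verify in detail via the projection optimality, the scaled trace inequality and the Poincar\'e inequality. The only cosmetic difference is that you use the $L^2(F)$-orthogonal (Pythagorean) splitting where the paper uses the triangle inequality; both give the same conclusion with shape-regularity-dependent constants.
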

\begin{proof}
It is evident that the expression on the right-hand side in \eqref{eq:brokenH1seminormequiv} is not greater than that on the left-hand side.
On the other hand, by the estimate of $Q_{0,F}$,
\begin{align*}
\sum_{F\in\mathcal F_h}h^{-1}_F\|[\![v]\!]\|^2_{0,F}&\lesssim \sum_{F\in\mathcal F_h}h^{-1}_F\|[\![v]\!]-Q_{0,F}[\![v]\!]\|^2_{0,F}+\sum_{F\in\mathcal F_h}h^{-1}_F\|Q_{0,F}[\![v]\!]\|^2_{0,F} \\
&\lesssim |v|^2_{1,h}+\sum_{F\in\mathcal F_h}h^{-1}_F\|Q_{0,F}[\![v]\!]\|^2_{0,F},
\end{align*}
which indicates \eqref{eq:brokenH1seminormequiv}.
\end{proof}

Recall some notations in two dimensions. For a scalar function $v$ and vector function $\boldsymbol{v} = (v_1, v_2)^\intercal$, denote
\[
\curl v = \left(\frac{\partial v}{\partial y}, -\frac{\partial v}{\partial x}\right)^{\intercal}, \quad 
\rot\boldsymbol{v} = \frac{\partial v_2}{\partial x}-\frac{\partial v_1}{\partial y}.\]

\subsection{Bubble functions}
Let $F_i\in\Delta_{d-1}(T)$ be the $(d-1)$-dimensional face opposite to vertex $\texttt{v}_i$, and $\lambda_i$ be the barycentric coordinate corresponding to vertex $\texttt{v}_i$. Then $\lambda_i(\boldsymbol{x})$ is a linear polynomial and $\lambda_i|_{F_i}=0$.
Let $b_T=\lambda_0 \lambda_1\cdots \lambda_d=\lambda_i b_{F_i}$ be the bubble function of $T$,
where $b_{F_i}$ is the bubble function of $F_i$. 

We generalize the $H^1$-nonconforming quadratic bubble function in two and three dimensions in \cite{FortinSoulie1983,Fortin1985} to arbitrary dimensions, i.e., let $b_T^{\textrm{NC}} = 2-(d+1)\sum^{d}_{i=0}\lambda_{i}^{2}$ for a $d$-dimensional simplex $T$.

\begin{lemma}
We have 
\begin{equation}\label{bTNCpropdiv}
\div\bigg(\dfrac{1}{d+2}\Big(b_T^{\rm NC}-2\lambda_{0}+\dfrac{2}{d}\Big)\sum^d_{i=1}\lambda_i\boldsymbol{t}_{0,i}\bigg)=b_{T}^{\rm NC}.
\end{equation}
\end{lemma}
\begin{proof}
Let $q=b_T^{\textrm{NC}}-2\lambda_{0}+\dfrac{2}{d}$ for simplicity.
By $\boldsymbol{t}_{0,i}\cdot\nabla \lambda_j=\delta_{i,j}-\delta_{0,j}$ with $\delta_{i,j}$ being the Kronecker delta (cf. \cite[(2.4)]{MR4458899}),
\begin{equation*}
\boldsymbol{t}_{0,i}\cdot\nabla q = \boldsymbol{t}_{0,i}\cdot\nabla b_T^{\textrm{NC}} + 2 = 2(d+1)(\lambda_{0}-\lambda_{i})+2,\quad \forall\,i=1,\ldots, d.
\end{equation*}
It follows from $\lambda_{0}+\lambda_{1}+\dots+\lambda_{d}=1$ that
\begin{align*}
\div\bigg(q\sum^d_{i=1}\lambda_i\boldsymbol{t}_{0,i}\bigg) &= q\sum^d_{i=1}\boldsymbol{t}_{0,i}\cdot\nabla \lambda_i + \sum^d_{i=1}\lambda_i\boldsymbol{t}_{0,i}\cdot\nabla q \\
&= dq + 2d\lambda_0 + 2b_T^{\textrm{NC}} - 2 = (d+2)b_T^{\textrm{NC}}.
\end{align*}
Thus, \eqref{bTNCpropdiv} holds.
\end{proof}

\begin{lemma}
It holds
\begin{equation}\label{bTNCprop}
(b_{T}^{\rm NC}, q)_F=0\quad \forall~q\in\mathbb P_1(F), F\in\Delta_{d-1}(T).
\end{equation}
\end{lemma}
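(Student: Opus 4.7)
The plan is to prove the identity face by face. I fix a face $F=F_j\in\Delta_{d-1}(T)$, opposite to the vertex $\texttt{v}_j$. Since $\lambda_j\equiv 0$ on $F$, the restriction of $b_T^{\rm NC}$ to $F$ reads
\begin{equation*}
b_T^{\rm NC}|_F = 2-(d+1)\sum_{i\neq j}\lambda_i^2,
\end{equation*}
and the remaining barycentric coordinates $\{\lambda_i\}_{i\neq j}$ serve as barycentric coordinates of the $(d-1)$-simplex $F$ (so that $\sum_{i\neq j}\lambda_i\equiv 1$ on $F$). This expression is symmetric in those $d$ variables. Combining this symmetry with the linearity of the $L^2(F)$ pairing, it will be enough to test orthogonality against just the two polynomials $q=1$ and $q=\lambda_k$ for a single fixed $k\neq j$, since these together with the permutation symmetry generate $\mathbb P_1(F)$.

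The main computational tool will be the standard simplex integration formula on the face $F$,
\begin{equation*}
\int_F\prod_{\ell}\lambda_{i_\ell}^{\alpha_\ell}\,\dd S = \frac{\prod_\ell \alpha_\ell!}{(\sum_\ell\alpha_\ell+d-1)!}\,(d-1)!\,|F|.
\end{equation*}
From it I will read off the three integrals $\int_F\lambda_i^2\,\dd S$, $\int_F\lambda_i^2\lambda_k\,\dd S$ (for $i\neq k$) and $\int_F\lambda_k^3\,\dd S$. For $q=1$, summing the first expression over the $d$ values of $i\neq j$ yields a quantity that exactly matches $2|F|/(d+1)$, so the prefactor $(d+1)$ in $b_T^{\rm NC}$ produces the desired cancellation with the $2$. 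For $q=\lambda_k$, I will split $\sum_{i\neq j}\int_F\lambda_i^2\lambda_k\,\dd S$ into the special $i=k$ term (whose integrand is $\lambda_k^3$, contributing a factorial $3!$) and the $d-1$ remaining equal terms (with integrand $\lambda_i^2\lambda_k$, each contributing $2!$). The combined numerator $6+2(d-1)=2(d+2)$ telescopes against the factor $d+2$ in the denominator, and what remains matches $2\int_F\lambda_k\,\dd S=\tfrac{2}{d}|F|$ exactly, producing the second cancellation.

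There is no real obstacle here: the argument is entirely combinatorial once the simplex integration formula is in place. The only mild care needed is the bookkeeping that separates the $i=k$ contribution from the other $d-1$ terms when pairing against $\lambda_k$. It is worth noting that the coefficient $(d+1)$ in the definition of $b_T^{\rm NC}$ is the unique value that forces both cancellations simultaneously, which is exactly why $b_T^{\rm NC}$ is the correct dimension-independent generalization of the Fortin--Souli{\'e} bubble of \cite{FortinSoulie1983,Fortin1985}.
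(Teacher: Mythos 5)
Your proposal is correct and follows essentially the same route as the paper: reduce to testing $b_T^{\rm NC}$ against the barycentric coordinates restricted to $F$ (the paper tests all $\lambda_i$, $i\neq j$, directly, while you invoke permutation symmetry to test only one of them plus the redundant $q=1$), and then apply the exact simplex integration formula, with the same split of the $i=k$ cubic term from the $d-1$ mixed terms giving the numerator $6+2(d-1)=2d+4$. The computations match the paper's, so no further comment is needed.
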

\begin{proof}
Without loss of generality, assume $F$ is opposite to vertex $\texttt{v}_0$, so it's equivalent to prove
\begin{equation*}
(b_{T}^{\rm NC}, \lambda_i)_F=0,\quad i=1,2,\ldots, d.
\end{equation*}
By a direct computation,
\begin{equation*}
\frac{1}{|F|}(b_{T}^{\rm NC}, \lambda_i)_F=\frac{2}{d}-(d+1)\sum_{j=1}^d\frac{1}{|F|}\int_F\lambda_j^2\lambda_i\dd s=\frac{2}{d}-(d+1)\frac{(d-1)!}{(d+2)!}(2d+4)=0,
\end{equation*}
as required.
\end{proof}

\subsection{Weak formulation and regularity}
The weak formulation of problem \eqref{SGE0} is to find $\boldsymbol{u}\in H^2_0(\Omega;\mathbb{R}^d)$ such that
\begin{align}\label{weak1}
\iota^2a(\boldsymbol{u},\boldsymbol{v})+b(\boldsymbol{u},\boldsymbol{v})=(\boldsymbol{f},\boldsymbol{v}) \quad \forall \ \boldsymbol{v}\in H^2_0(\Omega;\mathbb{R}^d),
\end{align}
where
\begin{equation*}
a(\boldsymbol{u},\boldsymbol{v}):=(\nabla \boldsymbol{\sigma}(\boldsymbol{u}), \nabla \boldsymbol{\varepsilon}(\boldsymbol{v})), \quad
b(\boldsymbol{u},\boldsymbol{v}):=(\boldsymbol{\sigma}(\boldsymbol{u}), \boldsymbol{\varepsilon}(\boldsymbol{v})).
\end{equation*}

Taking $\iota=0$,  problem \eqref{SGE0} becomes the linear elasticity problem
\begin{equation}\label{SGElinear}
\begin{cases}
-2\mu\div(\boldsymbol{\varepsilon}(\boldsymbol{u}_0))-\lambda \nabla\div\boldsymbol{u}_0=\boldsymbol{f} &\mbox{in} \ \Omega,\\
\boldsymbol{u}_0=\boldsymbol{0} &\mbox{on} \ \partial\Omega.
\end{cases}
\end{equation}
We assume the linear elasticity problem \eqref{SGElinear} has the $s$-regularity with $2\leq s\leq 3$
\begin{equation}\label{elasregularity}
\|\boldsymbol{u}_0\|_s+\lambda\|\div\boldsymbol{u}_0\|_{s-1}\lesssim\|\boldsymbol f\|_{s-2}.
\end{equation}
When $\Omega$ is convex in two and three dimensions, the regularity \eqref{elasregularity} for $s=2$ can be found in \cite{brenner1992linear, MR2641539}.

Throughout this paper, we assume that the solution of problem~\eqref{SGE0} satisfies the regularity estimates
\begin{align}
\label{Regularity-u}
|\boldsymbol{u}-\boldsymbol{u}_0|_1+\iota\|\boldsymbol{u}\|_2+\iota^2\|\boldsymbol{u}\|_3&\lesssim\iota^{1/2}\|\boldsymbol{f}\|_0,\\
\label{Regularity-divu}
\lambda\|\div(\boldsymbol{u}-\boldsymbol{u}_0)\|_0+\lambda\iota|\div\boldsymbol{u}|_1+\lambda\iota^2\|\div\boldsymbol{u}\|_2&\lesssim\iota^{1/2}\|\boldsymbol{f}\|_0.
\end{align}
The estimates~\eqref{Regularity-u}--\eqref{Regularity-divu} were established for polytopal domains in~\cite{MR4650917} under reasonable assumptions, and for smooth domains in~\cite{MR4549866}.

\subsection{Mixed boundary conditions}
\label{subsec:mixed-bc}
We briefly discuss mixed boundary conditions for the SGE model. Since $\Omega$ is a bounded polytope, its boundary consists of finitely many relatively open boundary faces. We decompose
\[
\partial\Omega=\overline{\Gamma_{\rm cl}\cup\Gamma_{\rm ss}\cup\Gamma_{\rm fr}},
\]
where $\Gamma_{\rm cl}$, $\Gamma_{\rm ss}$, and $\Gamma_{\rm fr}$ denote the clamped, simply supported, and free parts of the boundary, respectively. We assume that these three parts are pairwise disjoint relatively open unions of boundary faces and that
$\operatorname{meas}(\Gamma_{\rm cl}\cup\Gamma_{\rm ss})>0$. 
Let $\mathcal G_{\rm fr}$ denote the collection of relatively open $(d-1)$-dimensional boundary faces contained in $\Gamma_{\rm fr}$. The SGE model with mixed boundary conditions reads
\begin{equation*}
\begin{cases}
-\div\bigl((\boldsymbol I-\iota^2\Delta)\boldsymbol\sigma(\boldsymbol u)\bigr)
=
\boldsymbol f
& \mbox{in } \Omega,\\[1mm]
\boldsymbol u=\boldsymbol g_{\rm cl}^0,\quad
\partial_n\boldsymbol u=\boldsymbol g_{\rm cl}^1
& \mbox{on } \Gamma_{\rm cl},\\[1mm]
\boldsymbol u=\boldsymbol g_{\rm ss}^0,\quad
\iota^2\partial_n\boldsymbol\sigma(\boldsymbol u)\boldsymbol n
=
\boldsymbol g_{\rm ss}^1
& \mbox{on } \Gamma_{\rm ss},\\[1mm]
\bigl(\boldsymbol\sigma(\boldsymbol u)-\iota^2\Delta\boldsymbol\sigma(\boldsymbol u)\bigr)\boldsymbol n
-
\iota^2\div_G\bigl(\partial_{n}\boldsymbol\sigma(\boldsymbol u)\bigr)
=
\boldsymbol g_{\rm fr}^0
& \mbox{on } G\in\mathcal G_{\rm fr},\\[1mm]
\iota^2\partial_n\boldsymbol\sigma(\boldsymbol u)\boldsymbol n
=
\boldsymbol g_{\rm fr}^1
& \mbox{on } \Gamma_{\rm fr},\\[1mm]
\sum_{G\in\mathcal G_{\rm fr}, e\subset\partial G}
\iota^2(\partial_{n_G}\boldsymbol\sigma(\boldsymbol u))\boldsymbol n_{G,e}
=
\boldsymbol g_{\rm fr}^{\rm sk}
& \mbox{on } e\in\mathcal E_{\rm fr}.
\end{cases}
\end{equation*}
Here $\boldsymbol n$ denotes the unit outward normal to $\partial\Omega$. For each $G\in\mathcal G_{\rm fr}$, $\div_G$ denotes the row-wise surface divergence on $G$. 
The set $\mathcal E_{\rm fr}$ denotes the collection of codimension-two boundary faces contained in the intersections of the closures of two adjacent free boundary faces whose normal traces are distinct along their intersection. We assume that $\boldsymbol f\in L^2(\Omega;\mathbb R^d)$ and the essential boundary data are compatible in the sense that the following affine space is nonempty:
\[
V_g:=
\bigl\{
\boldsymbol v\in H^2(\Omega;\mathbb R^d):
\boldsymbol v=\boldsymbol g_{\rm cl}^0,\ 
\partial_n\boldsymbol v=\boldsymbol g_{\rm cl}^1
\text{ on }\Gamma_{\rm cl},\quad
\boldsymbol v=\boldsymbol g_{\rm ss}^0
\text{ on }\Gamma_{\rm ss}
\bigr\}.
\]
Set
\[
V_0:=
\bigl\{
\boldsymbol v\in H^2(\Omega;\mathbb R^d):
\boldsymbol v=\boldsymbol0 \text{ on } \Gamma_{\rm cl}\cup\Gamma_{\rm ss},\quad
\partial_n\boldsymbol v=\boldsymbol0 \text{ on } \Gamma_{\rm cl}
\bigr\}.
\]
Let
$\Gamma_*:=\Gamma_{\rm ss}\cup\Gamma_{\rm fr}$, and define
$\boldsymbol g_m$ by
$
\boldsymbol g_m=\boldsymbol g_{\rm ss}^1$ on $\Gamma_{\rm ss}$ and $\boldsymbol g_m=\boldsymbol g_{\rm fr}^1$ on $\Gamma_{\rm fr}$.
For any $\boldsymbol v\in V_0$, define
\[
\tr_{\rm mix}(\boldsymbol v)
:=
\Bigl(
\boldsymbol v|_{\Gamma_{\rm fr}},
\partial_n\boldsymbol v|_{\Gamma_*},
\{\boldsymbol v|_e\}_{e\in\mathcal E_{\rm fr}}
\Bigr).
\]
We define the trace space by
\[
V_0^{\tr}:=\tr_{\rm mix}(V_0),
\]
and equip $V_0^{\tr}$ with the quotient norm
\[
\|\boldsymbol\phi\|_{V_0^{\tr}}
:=
\inf\bigl\{
\|\boldsymbol w\|_2:
\boldsymbol w\in V_0,\ 
\tr_{\rm mix}(\boldsymbol w)=\boldsymbol\phi
\bigr\}.
\]
The first component of $\tr_{\rm mix}(\boldsymbol v)$ is the trace of $\boldsymbol v$ on the free boundary $\Gamma_{\rm fr}$, the second component is the trace of $\partial_n\boldsymbol v$ on $\Gamma_*$, and the third component is the codimension-two trace of $\boldsymbol v$ on the free-boundary skeleton $\mathcal E_{\rm fr}$. Thus $V_0^{\tr}$ is the trace space defined as the image of $V_0$ under $\tr_{\rm mix}$, and its components must be compatible traces of the same function in $V_0$; it is not an unconstrained product of the component trace spaces. This image-space viewpoint is consistent with the trace-space framework for fourth-order problems in \cite{FuhrerHeuerNiemi2019}.

The natural boundary data are understood as a linear functional on $V_0^{\tr}$. More precisely, the data $(\boldsymbol g_{\rm fr}^0,\boldsymbol g_m,\boldsymbol g_{\rm fr}^{\rm sk})$ are assumed to define a functional $\boldsymbol g\in (V_0^{\tr})'$ by
\[
\langle \boldsymbol g,\tr_{\rm mix}\boldsymbol v\rangle_{\rm mix}
:=
\langle\boldsymbol g_{\rm fr}^0,\boldsymbol v\rangle_{\Gamma_{\rm fr}}
+
\langle\boldsymbol g_m,\partial_n\boldsymbol v\rangle_{\Gamma_*}
+
\sum_{e\in\mathcal E_{\rm fr}}
\langle\boldsymbol g_{\rm fr}^{\rm sk},\boldsymbol v\rangle_e ,
\quad \boldsymbol v\in V_0 .
\]
Here $\langle\cdot,\cdot\rangle$ denotes the corresponding duality pairing, $\boldsymbol g_{\rm fr}^0$ is the generalized-traction datum on $\Gamma_{\rm fr}$, $\boldsymbol g_m$ is the double-traction datum on $\Gamma_*$, and $\boldsymbol g_{\rm fr}^{\rm sk}$ is the natural datum on the free-boundary skeleton.
For any $\boldsymbol g\in (V_0^{\tr})'$, define
\[
\|\boldsymbol g\|_{\iota,\partial}
:=
\sup_{\boldsymbol v\in V_0\setminus\{\boldsymbol0\}}
\frac{
\left|
\langle \boldsymbol g,\tr_{\rm mix}\boldsymbol v\rangle_{\rm mix}
\right|
}{
\|\boldsymbol v\|_1+\iota|\boldsymbol v|_2
}.
\]

Then the weak formulation is to find $\boldsymbol u\in V_g$ such that
\begin{align}\label{weak-mix-nonhomogeneous}
\iota^2a(\boldsymbol u,\boldsymbol v)+b(\boldsymbol u,\boldsymbol v)
&=(\boldsymbol f,\boldsymbol v)
+\langle \boldsymbol g,\tr_{\rm mix}\boldsymbol v\rangle_{\rm mix},
\quad \forall\,\boldsymbol v\in V_0 .
\end{align}
Since $\operatorname{meas}(\Gamma_{\rm cl}\cup\Gamma_{\rm ss})>0$, Korn's inequality holds on $V_0$. Together with the inequality (cf. \cite[(7)]{MR4296093}) 
\begin{equation}\label{eq:20260614}
(1-1/\sqrt{2})\|\nabla^2\boldsymbol v\|_0^2\le \|\nabla\boldsymbol\varepsilon(\boldsymbol v)\|_0^2, \end{equation}
this gives
\[
\|\boldsymbol v\|_1+\iota|\boldsymbol v|_2
\lesssim
\|\boldsymbol v\|_{\iota,\lambda},
\quad \forall\,\boldsymbol v\in V_0.
\]
By the definition of $\|\boldsymbol g\|_{\iota,\partial}$,
\[
\left|
\langle \boldsymbol g,\tr_{\rm mix}\boldsymbol v\rangle_{\rm mix}
\right|
\lesssim
\|\boldsymbol g\|_{\iota,\partial}
\|\boldsymbol v\|_{\iota,\lambda}.
\]
Thus the natural boundary functional in \eqref{weak-mix-nonhomogeneous} is bounded with respect to the energy norm. Taking any lifting $\boldsymbol u_g\in V_g$ and writing $\boldsymbol u=\boldsymbol u_g+\boldsymbol w$ with $\boldsymbol w\in V_0$, the problem is reduced to a variational problem on $V_0$. The bilinear form $\iota^2a(\cdot,\cdot)+b(\cdot,\cdot)$ is bounded and coercive on $V_0$ with respect to $\|\cdot\|_{\iota,\lambda}$, and hence \eqref{weak-mix-nonhomogeneous} is well-posed by the Lax-Milgram lemma \cite{Ciarlet1978}.

\section{Nonconforming finite elements and interpolation operators}\label{sec3}
In this section, we will construct low-order nonconforming finite elements for the vector field 
$\boldsymbol{u}$ and its divergence $\div\boldsymbol{u}$, along with their corresponding interpolation operators. We will also establish the error estimates for these interpolations.

\subsection{Nonconforming finite elements}\label{NCFEM}

For a $d$-dimensional simplex $T$,
take
\begin{align*}
\begin{split}
V(T)&:=\mathbb{P}_{2}(T;\mathbb{R}^{d})
\oplus \textrm{span}\bigg\{\dfrac{1}{d+2}\Big(b_T^{\textrm{NC}}-2\lambda_{0}+\dfrac{2}{d}\Big)\sum^d_{i=1}\lambda_i\boldsymbol{t}_{0,i}\bigg\} \\
&\quad\,\oplus\Oplus_{i=0}^{d}\Oplus_{j=1}^{d-1}\div\big(\skw(\boldsymbol{n}_{F_i}\otimes\boldsymbol{t}_j^{F_i})b_{T}b_{F_i}\mathbb{P}_{1}(F_i)\big)\\
&\quad\,\oplus\Oplus_{i=0}^{d}\Oplus_{j=1}^{d-1}\div\big(\skw(\boldsymbol{n}_{F_i}\otimes\boldsymbol{t}_j^{F_i})b^2_{T}b_{F_i}\mathbb{P}_{0}(F_i)\big)
\end{split}
\end{align*}
as the space of shape functions.
Clearly, $V(T)\subset \mathbb{P}_{3d+1}(T;\mathbb{R}^{d})$. Note that the bubble functions in the last two components of $V(T)$ are divergence-free and have vanishing normal traces on $\partial T$.
By \eqref{bTNCpropdiv}, 
\begin{equation}\label{eq:divVT}
\div V(T)=\mathbb{P}_{1}(T)\oplus{\rm span}\{b_T^{\textrm{NC}}\}.
\end{equation}
For $\boldsymbol{v}\in V(T)$ and $F\in\Delta_{d-1}(T)$, we have $\boldsymbol{v}\cdot\boldsymbol{n}|_F\in \mathbb P_2(F)$.
It's easy to see that
\[
\dim  V(T)=d\left(\begin{matrix}d+2\\2\end{matrix}\right)+1+(d-1)d(d+1)+(d-1)(d+1)=\frac{1}{2}d^2(3d+5).
\]

\begin{remark}\rm
In two dimensions,
\begin{align*}
\div\big(\skw(\boldsymbol{n}_{F_i}\otimes\boldsymbol{t}_j^{F_i})b_{T}b_{F_i}\mathbb{P}_{1}(F_i)\big)&=\curl(b_{T}b_{F_i}\mathbb{P}_{1}(F_i)), \\
\div\big(\skw(\boldsymbol{n}_{F_i}\otimes\boldsymbol{t}_j^{F_i})b^2_{T}b_{F_i}\mathbb{P}_{0}(F_i)\big)&=\curl(b^2_{T}b_{F_i}\mathbb{P}_{0}(F_i)).
\end{align*}
In three dimensions,
\begin{align*}
\div\big(\skw(\boldsymbol{n}_{F_i}\otimes\boldsymbol{t}_j^{F_i})b_{T}b_{F_i}\mathbb{P}_{1}(F_i)\big)&=\curl(b_{T}b_{F_i}\mathbb{P}_{1}(F_i)\boldsymbol{t}_{3-j}^{F_i}), \\
\div\big(\skw(\boldsymbol{n}_{F_i}\otimes\boldsymbol{t}_j^{F_i})b^2_{T}b_{F_i}\mathbb{P}_{0}(F_i)\big)&=\curl(b^2_{T}b_{F_i}\mathbb{P}_{0}(F_i)\boldsymbol{t}_{3-j}^{F_i}).
\end{align*}
In the above identities, the symbol ``$=$'' is understood as an equality of the resulting polynomial spaces, rather than a pointwise identity for each individual function.
\end{remark}

The degrees of freedom (DoFs) are chosen as
\begin{subequations}\label{dof}
\begin{align}
\label{dof1}
\int_{e}\boldsymbol{v}\cdot \boldsymbol{n}^e_i \dd s, & \quad e\in\Delta_{d-2}(T), i = 1,2,\\
\label{dof2}
\int_{F}\boldsymbol{v}\cdot \boldsymbol{n} \, q\, \dd s, & \quad q\in \mathbb{P}_0(F)\oplus\hat{\mathbb{P}}_2(F), F\in\Delta_{d-1}(T),\\
\label{dof3}
\int_{F}\boldsymbol{v}\cdot\boldsymbol{t}_i \, q\, \dd s, & \quad q\in\mathbb{P}_1(F), F\in\Delta_{d-1}(T), i = 1,2,\ldots,d-1,\\
\label{dof4}
\int_{F}\div\boldsymbol{v}\,\dd s, & \quad F\in\Delta_{d-1}(T),\\
\label{dof5}
\int_{F}\partial_{n}(\boldsymbol{v}\cdot\boldsymbol{t}_i)\,\dd s, & \quad F\in\Delta_{d-1}(T), i = 1,2,\ldots,d-1,\\
\label{dof6}
\dfrac 1{d+1}\sum_{i=0}^{d}(\skw\nabla\boldsymbol{v})(\texttt{v}_{i}),&
\end{align}
\end{subequations}
where $\hat{\mathbb{P}}_2(F) = \{q\in\mathbb{P}_2(F)\cap L_0^2(F): \int_e q \ds = 0, \forall \ e\in\Delta_{d-2}(F)\}$. 
In two dimensions, DoF \eqref{dof1} is exactly $\boldsymbol{v}(\texttt{v})$ for each vertex $\texttt{v}\in\Delta_0(T)$, and $\hat{\mathbb{P}}_2(F)=\{0\}$.
In three dimensions, for face $F=F_3$, we have
\begin{equation*}
\hat{\mathbb{P}}_2(F)={\rm span}\{3(\lambda_1^2-\lambda_0^2)-2(\lambda_1-\lambda_0), 3(\lambda_2^2-\lambda_0^2)-2(\lambda_2-\lambda_0)\}.
\end{equation*}

\begin{lemma}\label{p2f-lemma}
For face $F\in\Delta_{d-1}(T)$, the DoFs
\begin{subequations}\label{p2f-dof}
\begin{align}
\label{p2f-dof1}
\int_e v \ds, & \quad e\in\Delta_{d-2}(F),\\
\label{p2f-dof2}
\int_F v \ds, & \\
\label{p2f-dof3}
\int_F v\, q\ds, & \quad q\in\hat{\mathbb{P}}_2(F)
\end{align}
\end{subequations}
are unisolvent for $\mathbb{P}_2(F)$.
\end{lemma}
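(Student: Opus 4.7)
The plan is to establish uni-solvence by matching the dimension of $\mathbb{P}_2(F)$ with the number of DoFs and then verifying that the DoF map is injective. Since $F$ is a $(d-1)$-simplex, $\dim\mathbb{P}_2(F)=\binom{d+1}{2}=\tfrac{d(d+1)}{2}$ and $\#\Delta_{d-2}(F)=d$. First I would check that the $d+1$ linear functionals $q\mapsto\int_{e}q\,\dd s$ for $e\in\Delta_{d-2}(F)$ together with $q\mapsto\int_F q\,\dd s$ are linearly independent on $\mathbb{P}_2(F)$, so that
\[
\dim\hat{\mathbb{P}}_2(F)=\tfrac{d(d+1)}{2}-(d+1)=\tfrac{(d-2)(d+1)}{2}.
\]
This can be verified by exhibiting explicit test polynomials built from the barycentric coordinates on $F$ (for instance, $\lambda_i^2$ vanishes on the edge opposite $\texttt{v}_i$ but has nonzero integrals on the remaining edges and on $F$). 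The total DoF count is then $d+1+\tfrac{(d-2)(d+1)}{2}=\tfrac{d(d+1)}{2}=\dim\mathbb{P}_2(F)$.

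For injectivity, suppose $v\in\mathbb{P}_2(F)$ annihilates every DoF in \eqref{p2f-dof}. The vanishing of \eqref{p2f-dof1} and \eqref{p2f-dof2} is literally the defining condition of $\hat{\mathbb{P}}_2(F)$, so $v\in\hat{\mathbb{P}}_2(F)$. Testing \eqref{p2f-dof3} against $q=v$ yields $\int_F v^2\,\dd s=0$, and hence $v\equiv 0$ on $F$. Combined with the dimension count, the DoF map is a bijection and uni-solvence follows.

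The only non-obvious step is the linear independence underlying the dimension formula for $\hat{\mathbb{P}}_2(F)$; it is routine but requires some care since one must separate a codimension-one constraint ($\int_F q=0$) from $d$ edge averages simultaneously. The injectivity half is particularly clean thanks to the self-test $q=v$, which is the very reason the shape space for \eqref{p2f-dof3} is chosen to be $\hat{\mathbb{P}}_2(F)$ rather than some other complement of the edge and face averages in $\mathbb{P}_2(F)$.
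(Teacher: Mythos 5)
Your proposal is correct and follows essentially the same route as the paper: everything reduces to the linear independence of the $d+1$ edge- and face-average functionals on $\mathbb{P}_2(F)$ (which fixes $\dim\hat{\mathbb{P}}_2(F)$ and the DoF count), after which injectivity is immediate from the self-test $q=v$ in \eqref{p2f-dof3}. The only difference is that the paper makes the independence step you label ``routine'' explicit, testing with $b_F^{\rm NC}=2-d\sum_{i=1}^{d}\lambda_i^2$ (whose edge integrals vanish by \eqref{bTNCprop}) to get $C_0=0$ and with $\tfrac{d(d-1)}{2}\lambda_i^2-1$ to kill the remaining coefficients, which is exactly the kind of barycentric test-function computation you sketch.
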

\begin{proof}
It suffices to prove that the DoFs (\ref{p2f-dof1}) and (\ref{p2f-dof2}) are linearly
independent in the dual space of $\mathbb{P}_2(F)$.
Without loss of generality, we assume the vertices of $F$ are $\texttt{v}_1, \ldots, \texttt{v}_d$.
Assume that there exist constants $C_0,\ldots,C_{d}$ satisfying the following equation
\[
\sum^{d}_{i=1}C_i\int_{e_i} v \ds+C_{0}\int_F v \ds = 0 \quad \forall \ e_i\in\Delta_{d-2}(F), v\in\mathbb{P}_2(F).
\]
Taking $v = b_F^{\textrm{NC}} = 2 - d\sum^{d}_{i=1}\lambda^{2}_{i}\in\mathbb{P}_2(F)$, by \eqref{bTNCprop} we get $C_0 = 0$. By taking $v = \dfrac{d(d-1)}{2}\lambda_{i}^2-1\in\mathbb{P}_2(F)$ for $i = 1, 2, \ldots, d$, we obtain $C_1 = C_2 = \cdots = C_{d} = 0$. Hence the DoFs (\ref{p2f-dof1}) and (\ref{p2f-dof2}) are linearly independent.
%
\end{proof}

\begin{lemma}\label{lem:ncfmQunisol}
For a $d$-dimensional simplex $T$,
the DoFs
\begin{subequations}\label{dof-q}
\begin{align}
\label{dof1-q}
&\int_{F} q\ds, \quad F\in\Delta_{d-1}(T),\\
\label{dof2-q}
&\int_{T}q\dx
\end{align}
\end{subequations}
are	unisolvent for the space $\mathbb{P}_{1}(T)\oplus \textrm{span}\{b_T^{\mathrm{NC}}\}$.
\end{lemma}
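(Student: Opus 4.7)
The plan is to verify unisolvency by a dimension-count plus a kernel argument. First I note that $\dim(\mathbb{P}_1(T)\oplus\operatorname{span}\{b_T^{\textrm{NC}}\}) = (d+1)+1 = d+2$, while the number of DoFs in \eqref{dof1-q}--\eqref{dof2-q} is $\#\Delta_{d-1}(T)+1 = (d+1)+1 = d+2$. So it suffices to show that if $q = p + c\, b_T^{\textrm{NC}}$ with $p\in\mathbb P_1(T)$ and $c\in\mathbb R$ annihilates every DoF, then $q=0$.

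Next I would decouple the polynomial and bubble parts by exploiting the face orthogonality \eqref{bTNCprop}. Taking $q\equiv 1 \in\mathbb P_1(F)$ there gives $\int_F b_T^{\textrm{NC}}\,\dd s = 0$ for every $F\in\Delta_{d-1}(T)$. Therefore the vanishing of \eqref{dof1-q} collapses to
\[
\int_F p\,\dd s = 0 \quad \forall~F\in\Delta_{d-1}(T).
\]
These are exactly the classical Crouzeix--Raviart DoFs for $\mathbb P_1(T)$, which are known to be unisolvent; hence $p=0$.

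With $p=0$, only the interior DoF \eqref{dof2-q} remains: $c\int_T b_T^{\textrm{NC}}\,\dd x = 0$. A short barycentric computation, using $\int_T \lambda_i^2\,\dd x = \tfrac{2}{(d+1)(d+2)}|T|$, gives
\[
\int_T b_T^{\textrm{NC}}\,\dd x = 2|T| - (d+1)\sum_{i=0}^d \int_T\lambda_i^2\,\dd x = 2|T|\Bigl(1-\tfrac{d+1}{d+2}\Bigr) = \tfrac{2|T|}{d+2} \neq 0,
\]
so $c=0$ and $q=0$.

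The whole argument is therefore essentially free once \eqref{bTNCprop} is in hand; the only mild obstacle is remembering to verify $\int_T b_T^{\textrm{NC}}\,\dd x \neq 0$ (so that the interior moment is actually a nontrivial functional on $\operatorname{span}\{b_T^{\textrm{NC}}\}$) and to invoke the standard unisolvency of the face-mean DoFs for $\mathbb P_1$. Both are routine, and the proof amounts to no more than these three short steps.
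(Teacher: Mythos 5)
Your proof is correct and follows essentially the same route as the paper's: a dimension count, then using \eqref{bTNCprop} with the vanishing face DoFs \eqref{dof1-q} to force the $\mathbb{P}_1$ part to zero, and finally the interior DoF \eqref{dof2-q} to kill the bubble component. The only difference is that you make explicit the computation $\int_T b_T^{\textrm{NC}}\dx=\tfrac{2|T|}{d+2}\neq 0$ and the Crouzeix--Raviart unisolvency of the face means, which the paper leaves implicit.
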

\begin{proof}
First both the number of DoFs \eqref{dof-q} and the dimension of space $\mathbb{P}_{1}(T)\oplus \textrm{span}\{b_T^{\textrm{NC}}\}$ are $d+2$. Assume that $q=q_1+q_2\in\mathbb{P}_{1}(T)\oplus \textrm{span}\{b_T^{\textrm{NC}}\}$ and all the DoFs \eqref{dof-q} vanish, where $q_1\in\mathbb{P}_{1}(T)$ and $q_2\in\textrm{span}\{b_T^{\textrm{NC}}\}$. It holds from \eqref{bTNCprop} and the vanishing DoF \eqref{dof1-q} that $q_1=0$. Then $q=0$ follows from the vanishing DoF~\eqref{dof2-q}.
\end{proof}

\begin{lemma}\label{lem:P2divbubble}
Let $\boldsymbol{v}\in \mathbb{P}_{2}(T;\mathbb{R}^{d})\cap H_0(\div,T)$ satisfy that $\div\boldsymbol{v}=0$ and DoF \eqref{dof6} vanishes. Then $\boldsymbol{v}=0$. 
\end{lemma}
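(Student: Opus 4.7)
The plan is a dimension count combined with an injectivity argument. Set $U(T):=\mathbb{P}_2(T;\mathbb{R}^d)\cap H_0(\div,T)\cap\ker\div$. By standard BDM$_2$ theory, the $(d+1)\binom{d+1}{2}$ face-normal moments $\int_F\boldsymbol{v}\cdot\boldsymbol{n}\,q\,\ds$ for $q\in\mathbb{P}_2(F)$ are linearly independent functionals on the $d\binom{d+2}{2}$-dimensional space $\mathbb{P}_2(T;\mathbb{R}^d)$, and $\div$ surjects from the resulting intersection onto $\mathbb{P}_1(T)\cap L^2_0(T)$ of dimension $d$. Hence rank--nullity gives $\dim U(T)=\binom{d}{2}$, matching the dimension of $\mathbb{K}$, the target of DoF \eqref{dof6}. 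It therefore suffices to prove that the linear map $\Phi:U(T)\to\mathbb{K}$, $\boldsymbol{v}\mapsto\tfrac{1}{d+1}\sum_{i=0}^d(\skw\nabla\boldsymbol{v})(\texttt{v}_i)$, is injective (equivalently, an isomorphism).

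The next step is to reformulate $\Phi$. Since $\boldsymbol{v}\in\mathbb{P}_2(T;\mathbb{R}^d)$ makes $\skw\nabla\boldsymbol{v}\in\mathbb{P}_1(T;\mathbb{K})$ affine, its average over the $d+1$ vertices coincides with its value at the centroid, which in turn equals its mean over $T$. Combined with integration by parts and the boundary condition $\boldsymbol{v}\cdot\boldsymbol{n}|_{\partial T}=0$,
\begin{equation*}
\Phi(\boldsymbol{v})=\tfrac{1}{|T|}\int_T\skw\nabla\boldsymbol{v}\,\dx=\tfrac{1}{|T|}\int_{\partial T}\skw(\boldsymbol{v}\otimes\boldsymbol{n})\,\ds,
\end{equation*}
so the vanishing of DoF \eqref{dof6} is equivalent to $\int_{\partial T}\boldsymbol{v}\cdot\boldsymbol{S}\boldsymbol{n}\,\ds=0$ for every constant $\boldsymbol{S}\in\mathbb{K}$.

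For the (equivalent) surjectivity of $\Phi$, given $\boldsymbol{S}\in\mathbb{K}$, the linear rotation field $\boldsymbol{u}_{\boldsymbol{S}}(\boldsymbol{x}):=\boldsymbol{S}\boldsymbol{x}$ lies in $\mathbb{P}_1(T;\mathbb{R}^d)\subset\mathbb{P}_2(T;\mathbb{R}^d)$, is divergence-free (since $\tr\boldsymbol{S}=0$), and has $\skw\nabla\boldsymbol{u}_{\boldsymbol{S}}\equiv\boldsymbol{S}$, so $\Phi(\boldsymbol{u}_{\boldsymbol{S}})=\boldsymbol{S}$. Since $\boldsymbol{u}_{\boldsymbol{S}}\cdot\boldsymbol{n}|_{\partial T}$ is nonzero in general, I then add a quadratic correction $\boldsymbol{w}\in\mathbb{P}_2(T;\mathbb{R}^d)$ with $\boldsymbol{w}\cdot\boldsymbol{n}|_F=-\boldsymbol{u}_{\boldsymbol{S}}\cdot\boldsymbol{n}|_F$ on each face $F$, $\div\boldsymbol{w}=0$, and $\int_T\skw\nabla\boldsymbol{w}\,\dx=0$, so that $\boldsymbol{v}_{\boldsymbol{S}}:=\boldsymbol{u}_{\boldsymbol{S}}+\boldsymbol{w}\in U(T)$ realizes $\Phi(\boldsymbol{v}_{\boldsymbol{S}})=\boldsymbol{S}$.

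The hard part will be verifying the existence of the correction $\boldsymbol{w}$: the normal-trace and divergence-free conditions consume $d$ degrees of freedom, leaving a $\binom{d}{2}$-dimensional candidate family differing by elements of $U(T)$, and forcing $\int_T\skw\nabla\boldsymbol{w}\,\dx=0$ singles out a unique member precisely when $\Phi$ is already an isomorphism---which is the lemma itself. A truly self-contained argument therefore proceeds by exhibiting an explicit basis of $U(T)$ and directly evaluating $\Phi$: in $d=2$ one has $U(T)=\operatorname{span}\{\curl b_T\}$, and a direct barycentric computation shows that $\Phi(\curl b_T)$ is proportional to $\int_T\Delta b_T\,\dx>0$; in higher dimensions an explicit basis can be built from divergences of cubic skew-symmetric matrix-valued bubbles supported in $T$, and non-singularity of $\Phi$ on this basis follows from barycentric identities analogous to those behind Lemma~\ref{p2f-lemma}.
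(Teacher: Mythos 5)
Your dimension count ($\dim U(T)=\binom{d}{2}=\dim\mathbb{K}$) and the reformulation of DoF \eqref{dof6} as $\int_{\partial T}\boldsymbol{v}\cdot\boldsymbol{S}\boldsymbol{n}\,\ds=0$ for all $\boldsymbol{S}\in\mathbb{K}$ are correct, but the proof does not close. You yourself identify that the surjectivity-via-correction route is circular: choosing the corrector $\boldsymbol{w}$ with prescribed normal trace, $\div\boldsymbol{w}=0$ and $\int_T\skw\nabla\boldsymbol{w}\dx=0$ is possible exactly when $\Phi$ is an isomorphism, i.e.\ it presupposes the lemma. The fallback you offer only settles $d=2$ (where $U(T)=\operatorname{span}\{\curl b_T\}$ and $\Phi(\curl b_T)\neq 0$; note, incidentally, that $\int_T\Delta b_T\dx=\int_{\partial T}\partial_n b_T\ds<0$, not $>0$, though only nonvanishing matters). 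For $d\geq 3$ you assert that an explicit basis of $U(T)$ "can be built from divergences of cubic skew-symmetric matrix-valued bubbles" and that nonsingularity of $\Phi$ on it "follows from barycentric identities" --- neither claim is substantiated, and the second is precisely the statement of Lemma~\ref{lem:P2divbubble}. Since the lemma is asserted in arbitrary dimension, this is a genuine gap, not a routine verification.

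For comparison, the paper avoids both difficulties by writing every element of $\mathbb{P}_2(T;\mathbb{R}^d)\cap H_0(\div,T)$ explicitly as $\boldsymbol{v}=\sum_{i,j}a_{ij}\lambda_i\lambda_j\boldsymbol{t}_{i,j}$ with $A=(a_{ij})\in\mathbb{K}$ of size $(d+1)\times(d+1)$; then $\div\boldsymbol{v}=0$ forces the row sums of $A$ to vanish, and the vanishing of DoF \eqref{dof6} (again using that $\skw\nabla\boldsymbol{v}$ is affine) reduces, after pairing with $\nabla\lambda_k$ and $\nabla\lambda_\ell$, to the matrix equation $AB+BA=0$ with $B=(\nabla\lambda_i\cdot\nabla\lambda_j)$ symmetric positive semi-definite of rank $d$. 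Diagonalizing $B$ and using skew-symmetry of $A$ gives $A=0$, hence $\boldsymbol{v}=0$, uniformly in $d$. If you want to salvage your approach, you would need to actually construct the claimed basis of $U(T)$ for general $d$ and compute $\Phi$ on it, at which point you would essentially be redoing this algebraic computation; as it stands, the argument proves the lemma only in two dimensions.
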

\begin{proof}
We can express (cf. \cite[Section 3.3]{MR4458899})
\begin{align}\label{v-expression}
\boldsymbol{v} = \sum_{0\leq i<j\leq d}2a_{ij}\lambda_{i}\lambda_j\boldsymbol{t}_{i,j} = \sum^d_{i,j=0}a_{ij}\lambda_{i}\lambda_j\boldsymbol{t}_{i,j},
\end{align}
where $A=(a_{ij})_{0\leq i,j\leq d}\in \mathbb{K}$.
Since
\begin{equation*}
\div\boldsymbol{v} = \sum^d_{i,j=0}a_{ij}(\lambda_{i}-\lambda_j) = 2\sum^d_{i,j=0}a_{ij}\lambda_{i}, 
\end{equation*}
it follows from $\div\boldsymbol{v}=0$ that
\begin{align}\label{dof-proof2}
\sum^d_{j=0}a_{ij}=0, \quad  i = 0, 1, \ldots, d.
\end{align}
Noting that $\nabla\boldsymbol{v}\in\mathbb{P}_{1}(T;\mathbb{R}^{d\times d})$,
the vertex average of $\skw\nabla\boldsymbol{v}$ coincides with its element average, namely
\begin{equation}\label{vertex-average}
\dfrac{1}{d+1}\sum_{i=0}^{d}(\skw\nabla\boldsymbol{v})(\texttt{v}_{i})
=
\dfrac{1}{|T|}\int_{T}\skw\nabla\boldsymbol{v} \dx.
\end{equation}
Combining \eqref{v-expression} and \eqref{vertex-average} gives
\begin{align*}
\dfrac{1}{d+1}\sum_{i=0}^{d}(\skw\nabla\boldsymbol{v})(\texttt{v}_{i}) = \dfrac{1}{|T|}\sum^d_{i,j=0}a_{ij}\int_{T}\skw(\nabla(\lambda_{i}\lambda_j)\otimes\boldsymbol{t}_{i,j}) \dx.
\end{align*}
Since
\begin{equation*}
\nabla(\lambda_i\lambda_j)=\lambda_i\nabla\lambda_j+\lambda_j\nabla\lambda_i,
\quad\text{and}\quad
\frac{1}{|T|}\int_T\lambda_i\,dx =\frac{1}{|T|}\int_T\lambda_j\,dx =\frac{1}{d+1},
\end{equation*}
the vanishing DoF \eqref{dof6} implies that
\begin{align*}
\sum_{i,j=0}^{d}a_{ij}\skw(\nabla\lambda_j\otimes\boldsymbol{t}_{i,j}+\nabla\lambda_i\otimes\boldsymbol{t}_{i,j})=0.
\end{align*}
Due to the skew-symmetry of $a_{ij}$ and $\boldsymbol{t}_{i,j}$, we have 
\begin{align*}
\sum_{i,j=0}^{d}a_{ij}\skw(\nabla\lambda_i\otimes\boldsymbol{t}_{i,j})=0.
\end{align*}
Thanks to \eqref{dof-proof2}, multiply $\nabla\lambda_{k}$ on the right hand side of the last equation to get
\begin{align*}
\sum_{i=0}^{d}a_{ik}\nabla\lambda_{i}-\sum_{i=0}^{d}\sum_{j=0}^{d}a_{ij}\boldsymbol{t}_{i,j}(\nabla\lambda_{i}\cdot\nabla\lambda_{k}) = 0,\quad k = 0,1,\ldots,d.
\end{align*}
The dot product of $\nabla\lambda_{\ell}$ with the last equation yields
\begin{align}\label{dof-proof6}
\sum_{i=0}^{d}a_{ik}(\nabla\lambda_{i}\cdot\nabla\lambda_{\ell})-\sum_{i=0}^{d}a_{i\ell}(\nabla\lambda_{i}\cdot\nabla\lambda_{k}) = 0,\quad k,\ell = 0,1,\ldots,d.
\end{align}
Let $B=(\nabla\lambda_{i}\cdot\nabla\lambda_{j})_{0\leq i,j\leq d}$, which is symmetric positive semi-definite.
Noting that $\nabla\lambda_{0} = -(\nabla\lambda_{1}+\cdots+\nabla\lambda_{d})$, we have ${\rm rank}(B) = d$. Then \eqref{dof-proof6} becomes
\begin{equation*}
AB+BA=0.
\end{equation*}
Since $B$ is symmetric positive semi-definite, it can be expressed in the form $B = M^{\intercal}DM$, where $M$ is an orthogonal matrix and $D$ is a symmetric positive semi-definite diagonal matrix with ${\rm rank}(D) = d$. Hence
\begin{align*}
(MAM^{\intercal})D + D(MAM^{\intercal})= 0,
\end{align*}
which combined with the fact $MAM^{\intercal}$  is skew-symmetric yields $MAM^{\intercal}=0$. Thus $A=0$ and $\boldsymbol{v}=0$.
\end{proof}

\begin{lemma}
\label{dof-uni-solvent}
For a $d$-dimensional simplex $T$, DoFs \eqref{dof} are unisolvent for $V(T)$.
\end{lemma}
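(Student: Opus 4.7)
The plan is first to check that the total number of DoFs in \eqref{dof} equals $\dim V(T)=\tfrac{d^2(3d+5)}{2}$ by a direct count (using $\#\Delta_{d-2}(T)=\binom{d+1}{2}$, $\#\Delta_{d-1}(T)=d+1$, $\dim\hat{\mathbb P}_2(F)=\tfrac{(d-2)(d+1)}{2}$, and $\dim\mathbb K=\tfrac{d(d-1)}{2}$), so it suffices to assume that all DoFs vanish for some $\boldsymbol v\in V(T)$ and deduce $\boldsymbol v=0$ in three stages.

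First I would show $\boldsymbol v\cdot\boldsymbol n_F|_F=0$ for every $F\in\Delta_{d-1}(T)$. A short verification based on $b_T|_{\partial T}=0$, the identity $A\boldsymbol n\cdot\boldsymbol n=0$ for skew $A$, and the orthogonality $\boldsymbol t_j^{F_i}\cdot\boldsymbol n_{F_i}=0$ gives $\boldsymbol v\cdot\boldsymbol n_F|_F\in\mathbb P_2(F)$; Lemma~\ref{p2f-lemma} then applies, since \eqref{p2f-dof1} follows from \eqref{dof1} (because $\boldsymbol n_F$ lies in the $2$-plane normal to every $(d-2)$-subsimplex of $F$) and \eqref{p2f-dof2}--\eqref{p2f-dof3} coincide with \eqref{dof2}, all of which vanish. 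Next, by \eqref{eq:divVT}, $\div\boldsymbol v\in\mathbb P_1(T)\oplus\mathrm{span}\{b_T^{\rm NC}\}$, and the DoFs of Lemma~\ref{lem:ncfmQunisol} for $\div\boldsymbol v$ amount to \eqref{dof4} together with $\int_T\div\boldsymbol v\dx=\int_{\partial T}\boldsymbol v\cdot\boldsymbol n\ds=0$, so $\div\boldsymbol v=0$. Decomposing $\boldsymbol v=\boldsymbol v_2+c_1\phi_1+\boldsymbol v_b$ with $\boldsymbol v_b$ in the skew-matrix bubble span, the identity $\div\phi_1=b_T^{\rm NC}$ from \eqref{bTNCpropdiv} and the identity $\div(M\nabla\phi)=M{:}\nabla^2\phi=0$ for skew $M$ (whence $\div\boldsymbol v_b=0$) force $c_1=0$ and $\div\boldsymbol v_2=0$ by the direct-sum structure of $\div V(T)$.

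The hard part is the third stage: using the remaining DoFs \eqref{dof3}, \eqref{dof5}, \eqref{dof6} to deduce $\boldsymbol v_b=0$, after which $\boldsymbol v_2\in\mathbb P_2(T;\mathbb{R}^d)\cap H_0(\div,T)$ with $\div\boldsymbol v_2=0$ and vanishing \eqref{dof6} is handled by Lemma~\ref{lem:P2divbubble}. The key local simplification is that on a face $F_i$, each ``cross-face'' bubble $M'\nabla(b_T b_{F_k}q')$ with $k\ne i$ vanishes identically, since $b_{F_k}$ contains the factor $\lambda_i$ whenever $k\ne i$, so these bubbles contribute nothing to \eqref{dof3}. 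On the ``own'' face $F_i$, using $M\boldsymbol t_l^{F_i}=\tfrac{1}{2}\delta_{jl}\boldsymbol n_{F_i}$ for $M=\skw(\boldsymbol n_{F_i}\otimes\boldsymbol t_j^{F_i})$, the first-type bubble $M\nabla(b_T b_{F_i}q)$ has tangential trace proportional to $\delta_{jl}b_{F_i}^2 q$, while the $b_T^2 b_{F_i}$-type bubble has vanishing tangential trace on $F_i$ but an integrated normal derivative proportional to $q_0$ weighted by $b_{F_i}^3$. These face-local, nondegenerate pairings of \eqref{dof3} with $\mathbb P_1(F_i)$ and of \eqref{dof5} with $\mathbb P_0(F_i)$ would immediately isolate the bubble coefficients if not for the contributions of $\boldsymbol v_2\cdot\boldsymbol t_l^{F_i}|_{F_i}$ to \eqref{dof3}--\eqref{dof5} and the surviving cross-face contributions to \eqref{dof5} (first-type bubbles on $F_k\ne F_i$ still contribute there, unlike in \eqref{dof3}); coupling these consistently with the vanishing normal trace, $\div\boldsymbol v_2=0$, and DoF~\eqref{dof6}, and then applying Lemma~\ref{lem:P2divbubble}, is the technical heart of the proof.
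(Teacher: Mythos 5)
Your first two stages coincide with the paper's argument (DoF count; normal trace killed via Lemma~\ref{p2f-lemma} from \eqref{dof1}--\eqref{dof2}; $\div\boldsymbol v=0$ via \eqref{dof4} and Lemma~\ref{lem:ncfmQunisol}, which also forces the coefficient of the extra cubic field to vanish). But your third stage is left genuinely incomplete: you yourself flag the coupling between the quadratic part $\boldsymbol v_2$ and the bubble enrichments in \eqref{dof3}/\eqref{dof5} as ``the technical heart'' and do not resolve it. That coupling is real if one tries to eliminate the bubbles first, so as written the proof does not close.

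The missing idea is an ordering one, and it dissolves the difficulty entirely. Each enrichment field has the form $M\nabla\varphi$ with $M=\skw(\boldsymbol n_{F_i}\otimes\boldsymbol t_j^{F_i})$ constant and $\varphi=b_Tb_{F_i}q$ or $b_T^2b_{F_i}q$; since $\varphi$ vanishes to order at least three at every vertex of $T$, one has $\skw\nabla(M\nabla\varphi)=M\nabla^2\varphi=0$ at all vertices, so DoF \eqref{dof6} is blind to both bubble families and its vanishing passes directly to $\boldsymbol v_2$. Because the bubbles also have zero normal trace on $\partial T$, $\boldsymbol v_2\in\mathbb P_2(T;\mathbb R^d)\cap H_0(\div,T)$ with $\div\boldsymbol v_2=0$, and Lemma~\ref{lem:P2divbubble} yields $\boldsymbol v_2=0$ \emph{before} DoFs \eqref{dof3} and \eqref{dof5} are ever invoked. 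Only the bubbles then remain: on $F_i$ the cross-face bubbles have vanishing trace (their scalars contain $\lambda_i^2$), so \eqref{dof3} pairs $(\boldsymbol v\cdot\boldsymbol t_j^{F_i})|_{F_i}=-\tfrac12(\boldsymbol n_{F_i}\cdot\nabla\lambda_i)b_{F_i}^2p_{ij}$ with $\mathbb P_1(F_i)$ against a positive weight and gives $p_{ij}=0$ for all $i,j$; once the first-type bubbles are gone, the cross-face contamination of \eqref{dof5} you worried about disappears (it came only from first-type bubbles), and \eqref{dof5} isolates $q_{ij}=0$ from $\partial_n(\boldsymbol v\cdot\boldsymbol t_j^{F_i})|_{F_i}=-(\boldsymbol n_{F_i}\cdot\nabla\lambda_i)(\partial_nb_T)b_{F_i}^2q_{ij}$. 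A minor additional point: before applying Lemma~\ref{p2f-lemma} you should also verify that the normal trace of the extra generator $\frac{1}{d+2}\big(b_T^{\rm NC}-2\lambda_0+\frac2d\big)\sum_i\lambda_i\boldsymbol t_{0,i}$ is quadratic on each $(d-1)$-face (it vanishes on $F_k$, $k\ge1$, and reduces to a quadratic on $F_0$ because $\sum_{i\ge1}\lambda_i=1$ there); your listed ingredients only cover the bubble terms.
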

\begin{proof}
The total number of DoFs \eqref{dof3}--\eqref{dof5} is $d^2(d+1)$. Hence, the total number of DoFs \eqref{dof} is
\[
d(d+1)+\dfrac{(d-1)d(d+1)}{2}+d^2(d+1)+\dfrac{(d-1)d}{2}=\frac{1}{2}d^2(3d+5)=\dim  V(T).\]
Take $\boldsymbol{v}\in  V(T)$ and assume all the DoFs \eqref{dof} vanish,
then we prove $\boldsymbol{v}=0$.
By the vanishing DoFs (\ref{dof1})-(\ref{dof2}) and Lemma \ref{p2f-lemma}, 
we get $\boldsymbol{v}\cdot\boldsymbol{n}|_{\partial T}=0$. Hence, $\div\boldsymbol{v}\in L_0^2(T)$.
Together with the vanishing DoF (\ref{dof4}),
we obtain from \eqref{eq:divVT} and Lemma~\ref{lem:ncfmQunisol} that $\div\boldsymbol{v}=0$. Then we can write $\boldsymbol{v}=\boldsymbol{v}_1+\boldsymbol{v}_2+\boldsymbol{v}_3$ with $\boldsymbol{v}_1\in\mathbb{P}_{2}(T;\mathbb{R}^{d})$ satisfying $\div\boldsymbol{v}_1=0$, $\boldsymbol{v}_2\in\sum_{i=0}^{d}\sum_{j=1}^{d-1}\div\big(\skw(\boldsymbol{n}_{F_i}\otimes\boldsymbol{t}_j^{F_i})b_{T}b_{F_i}\mathbb{P}_{1}(F_i)\big)$, and $\boldsymbol{v}_3\in\sum_{i=0}^{d}\sum_{j=1}^{d-1}\div\big(\skw(\boldsymbol{n}_{F_i}\otimes\boldsymbol{t}_j^{F_i})b^2_{T}b_{F_i}\mathbb{P}_{0}(F_i)\big)$.

Noting that $\boldsymbol{v}_2\in H_0(\div,T)$ and $\boldsymbol{v}_3\in H_0^1(T;\mathbb R^d)$, we have $\boldsymbol{v}_1\in \mathbb{P}_{2}(T;\mathbb{R}^{d})\cap H_0(\div,T)$ with the vanishing DoF \eqref{dof6}.
Apply Lemma~\ref{lem:P2divbubble} to conclude $\boldsymbol{v}_1=0$.

Now
\[
\boldsymbol{v}=\sum_{i=0}^{d}\sum_{j=1}^{d-1}\div(\skw(\boldsymbol{n}_{F_i}\otimes\boldsymbol{t}_j^{F_i})b_{T}b_{F_i}p_{ij})
+\sum_{i=0}^{d}\sum_{j=1}^{d-1}\div(\skw(\boldsymbol{n}_{F_i}\otimes\boldsymbol{t}_j^{F_i})b^2_{T}b_{F_i}q_{ij}),
\]
where $p_{ij}\in\mathbb{P}_1(F_i)$ and $q_{ij}\in\mathbb{P}_0(F_i)$. 
By $(\boldsymbol{v}\cdot\boldsymbol{t}_j^{F_i})|_{F_i}=-\frac{1}{2}(\boldsymbol{n}_{F_i}\cdot\nabla\lambda_i)b_{F_i}^2p_{ij}$, we obtain $p_{ij}=0$ from the vanishing DoF \eqref{dof3}.
By $\partial_n(\boldsymbol{v}\cdot\boldsymbol{t}_j^{F_i})|_{F_i}=-(\boldsymbol{n}_{F_i}\cdot\nabla\lambda_i)(\partial_nb_T)b_{F_i}^2q_{ij}$, we conclude $q_{ij}=0$ from the vanishing DoF \eqref{dof5}.
Thus, $\boldsymbol{v}=0$.
\end{proof}

Define the global $H^2$-nonconforming finite element space
\begin{align*}
V_{h}&=\{\boldsymbol{v}_{h}\in L^{2}(\Omega;\mathbb{R}^{d}):\boldsymbol{v}_h|_T\in V(T)~\textrm{for}~T\in\mathcal{T}_h; \textrm{all~the~DoFs \eqref{dof1}-\eqref{dof5}} \\
&\qquad\qquad\;\;\textrm{are~single-valued, and DoFs \eqref{dof1}-\eqref{dof3} on boundary vanish}\}. 
\end{align*}
Note that DoF~\eqref{dof6} is treated as interior to $T$, and therefore imposes no continuity constraints.
\begin{remark}\rm
Notice that
\begin{align*}
\int_{F}\div\boldsymbol{v}\,\dd s&=\int_{F}\partial_n(\boldsymbol{v}\cdot\boldsymbol{n})\,\dd s+\int_{F}\div_F\boldsymbol{v}\,\dd s \\
&=\int_{F}\partial_n(\boldsymbol{v}\cdot\boldsymbol{n})\,\dd s+\sum_{e\in\partial F}\int_{e}\boldsymbol{v}\cdot\boldsymbol{n}_{F,e}\,\dd s.
\end{align*}
With DoF \eqref{dof1}, when defining the finite element space $V_h$, DoF \eqref{dof4} can be equivalently replaced with
\begin{equation}\label{dof4new}
\int_{F}\partial_n(\boldsymbol{v}\cdot\boldsymbol{n})\,\dd s,  \quad F\in\Delta_{d-1}(T).
\end{equation}
\end{remark}
Clearly $V_{h}\subset H_0(\div,\Omega)$, but $V_{h}\not\subseteq H_0^1(\Omega;\mathbb R^d)$.
The finite element space $V_h$ 
has the weak continuity
\begin{align}
\label{weak-continuity-vh1}
&\int_{F}[\![\nabla_h\boldsymbol{v}_h]\!]\ds=0 \quad \forall \ \boldsymbol{v}_h\in V_h, \ F\in\mathring{\mathcal{F}}_h,\\
\label{weak-continuity-vh2}
&\int_{F}[\![\boldsymbol{v}_h]\!]\cdot \boldsymbol{q} \ds=0 \quad \forall \ \boldsymbol{v}_h\in V_h, \ \boldsymbol{q}\in\mathbb{P}_1(F;\mathbb{R}^d), \ F\in\mathcal{F}_h.
\end{align}
The weak continuity \eqref{weak-continuity-vh1} follows from the single-valuedness of the DoFs \eqref{dof5} and \eqref{dof4new}. The weak continuity \eqref{weak-continuity-vh2} follows from the inclusion $V_h\subset H_0(\div,\Omega)$ together with the single-valuedness of the DoF \eqref{dof3}.

Introduce the following finite element space for $\div\boldsymbol{u}$
\begin{align*}
Q_{h}=\{q_{h}\in L_0^{2}(\Omega)&: q_h|_T\in \mathbb{P}_{1}(T)\oplus \textrm{span}\{b_T^{\textrm{NC}}\}~\textrm{for}~T\in\mathcal{T}_h, \\
&\qquad\quad\;\;\textrm{ and DoF \eqref{dof1-q} is single-valued}\}. 
\end{align*}
Space $Q_{h}$ is $H^1$-nonconforming.

\subsection{Interpolation operators}\label{NCFEM-Q}
For the locking-free analysis, we need to define some interpolation operators.
We first introduce the quadratic
Brezzi-Douglas-Marini (BDM) element \cite{MR4458899,ChenChenHuangWei2024,BrezziDouglasMarini1986,BrezziDouglasDuranFortin1987,nedelec1986new}. The quadratic BDM element takes $\mathbb{P}_{2}(T;\mathbb{R}^{d})$ as the shape function space, and the DoFs are chosen as
\begin{align}
\label{BDM-dof1}
\int_{F}\boldsymbol{v}\cdot \boldsymbol n \, q\ds, & \quad q\in \mathbb{P}_{2}(F), \, F\in\Delta_{d-1}(T),\\
\label{BDM-dof2}
\int_{T}\boldsymbol{v}\cdot \boldsymbol{q}\dx, & \quad \boldsymbol{q}\in 
\mathbb{P}_{0}(T;\mathbb R^d)\oplus\mathbb{P}_{0}(T;\mathbb{K})\boldsymbol{x}.
\end{align}
Let $I_T^{\rm BDM}:H^1(T;\mathbb{R}^{d})\rightarrow\mathbb{P}_{2}(T;\mathbb{R}^{d})$ be the nodal interpolation operator based on DoFs (\ref{BDM-dof1})-(\ref{BDM-dof2}). It holds \cite{MR3097958}
\begin{align}
\label{BDM-div}
\div(I_T^{\rm BDM}\boldsymbol{v})=Q_{1,T}(\div\boldsymbol{v}) \quad\forall~\boldsymbol{v}\in H^1(T;\mathbb{R}^{d}).
\end{align}
Now we define the interpolation operator $I_h^V: H^1_0(\Omega;\mathbb{R}^d)\rightarrow V_h$ as follows:
for any $\boldsymbol{v}\in H^1_0(\Omega;\mathbb{R}^d),$
\begin{align}
\notag
&\int_e(I_h^V\boldsymbol{v})\cdot\boldsymbol{n}^e_i\ds=\dfrac{1}{\#\mathcal{T}_e}\sum_{T\in\mathcal{T}_e}\int_e(I_T^{\rm BDM}\boldsymbol{v})\cdot\boldsymbol{n}^e_i\ds \quad \forall~e\in\mathring{\mathcal{E}}_h, i=1,2,\\
\label{interpolation-I2}&\int_{F}(I_h^V\boldsymbol{v})\cdot\boldsymbol{n} \, q\ds=\int_{F}\boldsymbol{v}\cdot\boldsymbol{n} \, q\ds \quad \forall~q\in \mathbb{P}_0(F)\oplus\hat{\mathbb{P}}_2(F), F\in\mathring{\mathcal{F}}_h,\\
\notag
&\int_{F}\Pi_F(I_h^V\boldsymbol{v}) \cdot \boldsymbol{q} \ds=\int_{F}\Pi_F\boldsymbol{v} \cdot \boldsymbol{q}\ds\quad \forall~\boldsymbol{q}\in\mathbb{P}_1(F;\mathbb{R}^{d-1}), F\in\mathring{\mathcal{F}}_h,\\
\label{interpolation-I4}&\int_{F}\div(I_h^V\boldsymbol{v})\ds=\dfrac{1}{\#\mathcal{T}_F}\sum_{T\in\mathcal{T}_F}\int_{F}\div(I_T^{\rm BDM}\boldsymbol{v})\ds \quad \forall \ F\in\mathcal{F}_h,\\
\notag
&\int_{F}\partial_{n}(\Pi_F(I_h^V\boldsymbol{v}))\ds=\dfrac{1}{\#\mathcal{T}_F}\sum_{T\in\mathcal{T}_F}\int_{F}\partial_{n}(\Pi_F(I_T^{\rm BDM}\boldsymbol{v}))\ds \quad \forall \ F\in\mathcal{F}_h,\\
\notag
&\frac 1{d+1}\sum_{i=0}^{d}(\skw\nabla(I_h^V\boldsymbol{v}))(\texttt{v}_{i})=\frac 1{d+1}\sum_{i=0}^{d}(\skw\nabla(I_T^{\rm BDM}\boldsymbol{v}))(\texttt{v}_{i}) \quad \forall \ T\in\mathcal{T}_h.
\end{align}
\begin{lemma}
\label{interpolation-I-lemma1}
For integers $0\leq m\leq 3$ and $j=0,1$, we have for $\boldsymbol{v}\in H^1_0(\Omega;\mathbb{R}^d)\cap H^s(\Omega;\mathbb{R}^d)$ that
\begin{align}
\label{Ih-error1}
|\boldsymbol{v}-I_h^V\boldsymbol{v}|_{m,T}&\lesssim h_T^{s-m}|\boldsymbol{v}|_{s,\omega_T} \qquad\; \textrm{ for } \max\{m,1\}\leq s\leq 3, \\
\label{Ih-error11}
\|\partial_n^j\boldsymbol{\varepsilon}(\boldsymbol{v}-I_h^V\boldsymbol{v})\|_{0,\partial T}&\lesssim h_T^{s-j-3/2}|\boldsymbol{v}|_{s,\omega_T} \quad \textrm{ for } j+2\leq s\leq 3.
\end{align}
\end{lemma}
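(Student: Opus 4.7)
The plan is to use the BDM interpolation $I_T^{\rm BDM}$ as an intermediate bridge, and split on each simplex
\begin{equation*}
\boldsymbol v - I_h^V\boldsymbol v = (\boldsymbol v - I_T^{\rm BDM}\boldsymbol v) + (I_T^{\rm BDM}\boldsymbol v - I_h^V\boldsymbol v).
\end{equation*}
The first summand is handled by the classical BDM interpolation estimate $|\boldsymbol v-I_T^{\rm BDM}\boldsymbol v|_{m,T}\lesssim h_T^{s-m}|\boldsymbol v|_{s,T}$, which is valid for the full range $0\leq m\leq s\leq 3d+2$. Only the second summand will require genuine work.

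To treat it, I would introduce a local, non-averaged interpolant $I_T^{\rm loc}\boldsymbol v\in V(T)$ specified by exactly the same formulas as $I_h^V\boldsymbol v|_T$ in \eqref{interpolation-I2}--\eqref{interpolation-I4} except with each face/edge average $\frac{1}{\#\mathcal T_f}\sum_{T'\in\mathcal T_f}$ replaced by the $T$-contribution only. Since $I_T^{\rm BDM}$ is the identity on $V(T)\subset\mathbb P_{3d+1}(T;\mathbb R^d)$, the DoF identifications together with Lemma~\ref{dof-uni-solvent} show that $I_T^{\rm loc}\boldsymbol v=\boldsymbol v$ for every $\boldsymbol v\in V(T)$; in particular $I_T^{\rm loc}$ preserves $\mathbb P_2(T;\mathbb R^d)$. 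A standard Bramble-Hilbert and scaling argument on a reference simplex then yields
\begin{equation*}
|\boldsymbol v-I_T^{\rm loc}\boldsymbol v|_{m,T}\lesssim h_T^{s-m}|\boldsymbol v|_{s,T},\qquad \max\{m,1\}\leq s\leq 3.
\end{equation*}

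It then remains to bound $\boldsymbol w_T:=I_T^{\rm loc}\boldsymbol v-I_h^V\boldsymbol v|_T\in V(T)$. By construction, the DoFs of $\boldsymbol w_T$ coming from \eqref{dof2}--\eqref{dof3} (specified directly from $\boldsymbol v$) and from \eqref{dof6} (specified from $I_T^{\rm BDM}\boldsymbol v$ on $T$ alone) vanish, while each surviving DoF from \eqref{dof1}, \eqref{dof4}, \eqref{dof5} has the telescoping form
\begin{equation*}
\frac{1}{\#\mathcal T_f}\sum_{T'\in\mathcal T_f}L_f\bigl((\boldsymbol v-I_{T'}^{\rm BDM}\boldsymbol v)-(\boldsymbol v-I_T^{\rm BDM}\boldsymbol v)\bigr),
\end{equation*}
where $L_f$ is the underlying linear functional on the face or edge $f$. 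Applying the trace inequality $\|w\|_{0,\partial T'}\lesssim h_{T'}^{-1/2}\|w\|_{0,T'}+h_{T'}^{1/2}|w|_{1,T'}$ (and its higher-order variants for the normal-derivative and skew-gradient DoFs) to the BDM error on each $T'\in\omega_T\cup\{T\}$ and combining with the BDM interpolation estimate gives a bound of order $h_T^{s-m-(d-1)/2}|\boldsymbol v|_{s,\omega_T}$ for each DoF. An inverse inequality on the finite-dimensional space $V(T)$, together with the scaling-based equivalence of $\|\cdot\|_{0,T}$ with the appropriately weighted $\ell^2$-sum of the DoFs, then produces \eqref{Ih-error1}. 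The boundary estimate \eqref{Ih-error11} follows immediately by applying the trace inequality
\begin{equation*}
\|\partial_n^j z\|_{0,\partial T}\lesssim h_T^{-1/2}|z|_{j,T}+h_T^{1/2}|z|_{j+1,T}
\end{equation*}
to $z=\boldsymbol\varepsilon(\boldsymbol v-I_h^V\boldsymbol v)$ and invoking \eqref{Ih-error1} with $m=j+1$ and $m=j+2$.

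The main obstacle will be the patch bookkeeping. The averaging in the definition of $I_h^V$ mixes BDM values from every simplex that shares a $(d-1)$- or $(d-2)$-face with $T$, so the telescoped BDM errors naturally live on the full patch $\omega_T$ rather than on $T$ alone. One must carefully group the contributions from \eqref{dof1}, \eqref{dof4}, \eqref{dof5} by their originating neighbor $T'$ before applying the trace bound there, which is precisely why the right-hand side of \eqref{Ih-error1} involves $|\boldsymbol v|_{s,\omega_T}$ and not $|\boldsymbol v|_{s,T}$. A secondary, routine verification is the polynomial preservation $I_T^{\rm loc}|_{V(T)}=\mathrm{id}$, which underpins the Bramble-Hilbert step and relies on $\mathbb P_2(T;\mathbb R^d)\subset V(T)$ and on $I_T^{\rm BDM}$ acting as the identity on $\mathbb P_{3d+1}(T;\mathbb R^d)$.
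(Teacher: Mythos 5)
Your proposal is correct and follows essentially the same route as the paper: compare with the BDM interpolant, control the remaining discrete difference through its (averaged) DoFs by trace inequalities together with BDM interpolation/jump estimates on the patch, conclude via scaling and inverse inequalities, and deduce \eqref{Ih-error11} from \eqref{Ih-error1} by the elementwise trace inequality. Your extra intermediate interpolant $I_T^{\rm loc}$ merely repackages the paper's direct scaling estimate for $(I_h^V\boldsymbol{v})|_T-I_T^{\rm BDM}\boldsymbol{v}$, with the minor benefit that the function you scale lies in $V(T)$, where the DoF-based norm equivalence applies verbatim.
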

\begin{proof}
Let $\boldsymbol{w}=(I_h^V\boldsymbol{v})|_T-I_T^{\rm BDM}\boldsymbol{v}\in V(T)$ for simplicity. Note that the DoFs \eqref{dof2} and \eqref{dof6} of $\boldsymbol{w}$ vanish, while the DoFs \eqref{dof4} and \eqref{dof5} vanish on boundary faces.
By scaling arguments and the definition of $I_h^V$,
\begin{align*}
\|\boldsymbol{w}\|^2_{0,T}& \lesssim h^2_T\sum_{e\in\Delta_{d-2}(T)}\sum_{i=1}^2\|Q_{0,e}(\boldsymbol{w}\cdot\boldsymbol{n}_i^e)\|^2_{0,e} + h_T\sum_{F\in\Delta_{d-1}(T)}\|Q_{1,F}(\Pi_F\boldsymbol{w})\|^2_{0,F}\\
&\quad+h^3_T\sum_{F\in\Delta_{d-1}(T)\cap\mathring{\mathcal{F}}_h}\big(\|Q_{0,F}(\div\boldsymbol{w})\|^2_{0,F} + \|Q_{0,F}(\partial_{n}(\Pi_F\boldsymbol{w}))\|^2_{0,F}\big)\\
&\lesssim h^2_T\sum_{e\in\Delta_{d-2}(T)\cap\mathring{\mathcal{E}}_h}\sum_{i=1}^2\sum_{T^*\in\mathcal{T}_e}\|(I_T^{\rm BDM}\boldsymbol{v}-I_{T^*}^{\rm BDM}\boldsymbol{v})\cdot\boldsymbol{n}_i^e\|^2_{0,e}\\
&\quad+h^2_T\sum_{e\in\Delta_{d-2}(T)\cap\mathcal{E}^\partial_h}\sum_{i=1}^2\|I_T^{\rm BDM}\boldsymbol{v}\cdot\boldsymbol{n}_i^e\|^2_{0,e}+h_T\sum_{F\in\Delta_{d-1}(T)}\|\boldsymbol{v}-I_T^{\rm BDM}\boldsymbol{v}\|^2_{0,F}\\
&\quad +h^3_T\sum_{F\in\Delta_{d-1}(T)\cap\mathring{\mathcal{F}}_h}\big(\|[\![\div(I_T^{\rm BDM}\boldsymbol{v})]\!]\|^2_{0,F}+\|[\![\partial_{n}(\Pi_F(I_T^{\rm BDM}\boldsymbol{v}))]\!]\|^2_{0,F}\big).
\end{align*}
Using the trace inequality and the inverse inequality, we obtain
\begin{align*}
\|\boldsymbol{w}\|^2_{0,T}&\lesssim h_T\sum_{e\in\Delta_{d-2}(T)}\sum_{F\in\mathcal{F}_h, e\in\partial F}\|[\![I_T^{\rm BDM}\boldsymbol{v}]\!]\|^2_{0,F}+h_T\sum_{F\in\Delta_{d-1}(T)}\|\boldsymbol{v}-I_T^{\rm BDM}\boldsymbol{v}\|^2_{0,F}\\
&\quad +h^3_T\sum_{F\in\Delta_{d-1}(T)\cap\mathring{\mathcal{F}}_h}\|[\![\nabla(I_T^{\rm BDM}\boldsymbol{v})]\!]\|^2_{0,F}.
\end{align*}
Apply the inverse inequality and the interpolation error estimates of $I_T^{\rm BDM}$ to get
\begin{equation*}
h^{m}_T|I_h^V\boldsymbol{v}-I_T^{\rm BDM}\boldsymbol{v}|_{m,T}\lesssim \|\boldsymbol{w}\|_{0,T}\lesssim h_T^{s}|\boldsymbol{v}|_{s,\omega_T}.
\end{equation*}
Then we derive the estimate (\ref{Ih-error1}) from the triangle inequality and the interpolation error estimate of $I_T^{\rm BDM}$.

By the trace inequality,
\begin{equation*}
\|\partial_n^j\boldsymbol{\varepsilon}(\boldsymbol{v}-I_h^V\boldsymbol{v})\|_{0,\partial T}\lesssim h_T^{-1/2}|\boldsymbol{\varepsilon}(\boldsymbol{v}-I_h^V\boldsymbol{v})|_{j,T} + h_T^{1/2}|\boldsymbol{\varepsilon}(\boldsymbol{v}-I_h^V\boldsymbol{v})|_{j+1,T}.
\end{equation*}
Thus, \eqref{Ih-error11} holds from \eqref{Ih-error1}.
\end{proof}

Then we define an interpolation operator $I_h^Q: L_0^2(\Omega)\rightarrow Q_{h}$ as follows:
for any $q\in L_0^2(\Omega)$,
\begin{align}
\label{interpolation-Jh1}\int_{F}I_h^Q q\ds&=\dfrac{1}{\#\mathcal{T}_F}\sum_{T\in\mathcal{T}_F}\int_{F}Q_{1,T} q\ds \quad \forall \ F\in\mathcal{F}_h,\\
\label{interpolation-Jh2}\int_{T}I_h^Q q\dx&=\int_{T}q\dx \quad\forall \ T\in\mathcal{T}_h.
\end{align}
Utilizing the definition of above interpolation operators, we establish the following commutative property.
\begin{lemma}
\label{commutative}
It holds
\begin{align}
\label{commutative1}
\div (I_h^V\boldsymbol{v})&=I_h^Q(\div\boldsymbol{v}) \quad \forall~\boldsymbol{v}\in H^1_0(\Omega;\mathbb{R}^d). 
\end{align}
\end{lemma}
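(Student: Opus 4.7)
My plan is to reduce the identity to a DoF-check on each element $T$ using the local uni-solvence in Lemma~\ref{lem:ncfmQunisol}. The preparatory step is to observe that both $\div(I_h^V\boldsymbol v)$ and $I_h^Q(\div\boldsymbol v)$ lie in $Q_h$: the second by construction, the first because $\div V(T)=\mathbb P_1(T)\oplus\mathrm{span}\{b_T^{\mathrm{NC}}\}$ by \eqref{eq:divVT}, the single-valuedness of $\int_F\div(\cdot)\ds$ across interior faces comes from the single-valuedness of DoF \eqref{dof4} in the definition of $V_h$, and $\int_\Omega\div(I_h^V\boldsymbol v)\dx=0$ follows from $V_h\subset H_0(\div,\Omega)$ (itself a consequence of the vanishing boundary DoFs \eqref{dof1}, \eqref{dof2} combined with Lemma~\ref{p2f-lemma} applied to $\boldsymbol v\cdot\boldsymbol n|_F\in\mathbb P_2(F)$).

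Once both sides are in $Q_h$, it suffices to match the two local DoFs \eqref{dof1-q} and \eqref{dof2-q} on every $T\in\mathcal T_h$. The face DoF \eqref{dof1-q} is immediate from the BDM commutative identity \eqref{BDM-div}: on each face $F$,
\[
\int_F\div(I_h^V\boldsymbol v)\,\ds
=\frac{1}{\#\mathcal T_F}\sum_{T'\in\mathcal T_F}\int_F\div(I_{T'}^{\mathrm{BDM}}\boldsymbol v)\,\ds
=\frac{1}{\#\mathcal T_F}\sum_{T'\in\mathcal T_F}\int_F Q_{3d,T'}(\div\boldsymbol v)\,\ds
=\int_F I_h^Q(\div\boldsymbol v)\,\ds,
\]
by \eqref{interpolation-I4}, \eqref{BDM-div}, and \eqref{interpolation-Jh1} in turn.

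For the element DoF \eqref{dof2-q}, I would integrate by parts and then apply \eqref{interpolation-I2} with $q=1\in\mathbb P_0(F)$ on interior faces, while on boundary faces using $(I_h^V\boldsymbol v)\cdot\boldsymbol n|_F=0=\boldsymbol v\cdot\boldsymbol n|_F$ (the former by the vanishing boundary DoFs of $V_h$, the latter by $\boldsymbol v\in H_0^1$). This gives
\[
\int_T\div(I_h^V\boldsymbol v)\,\dx
=\sum_{F\in\Delta_{d-1}(T)}\int_F(I_h^V\boldsymbol v)\cdot\boldsymbol n\,\ds
=\sum_{F\in\Delta_{d-1}(T)}\int_F\boldsymbol v\cdot\boldsymbol n\,\ds
=\int_T\div\boldsymbol v\,\dx,
\]
which matches $\int_T I_h^Q(\div\boldsymbol v)\,\dx$ by \eqref{interpolation-Jh2}.

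The whole argument is really a bookkeeping exercise; the only nontrivial ingredient is the BDM commutative property \eqref{BDM-div}, which is what makes the averaged definitions on interior faces align. I expect the only place requiring genuine care is the boundary-face contribution in the element-DoF check, where the boundary condition on $\boldsymbol v$ and the construction of the boundary DoFs of $V_h$ must be invoked together to kill the normal trace on $\partial\Omega$.
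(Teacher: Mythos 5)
Your proposal is correct and follows essentially the same route as the paper: match the face moments via \eqref{interpolation-I4}, \eqref{BDM-div} and \eqref{interpolation-Jh1}, match the element average via \eqref{interpolation-Jh2}, integration by parts and \eqref{interpolation-I2} (with the boundary normal traces vanishing), and conclude by the uni-solvence of Lemma~\ref{lem:ncfmQunisol}. Your extra remarks on membership in $Q_h$ and the boundary-face bookkeeping are just a more explicit spelling-out of what the paper leaves implicit.
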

\begin{proof}
For $\boldsymbol{v}\in H^1_0(\Omega;\mathbb{R}^d)$, we have $\div\boldsymbol{v}\in L^2_0(\Omega)$. 
By (\ref{interpolation-I4}), (\ref{BDM-div}) and (\ref{interpolation-Jh1}), 
\[
\int_{F}\div(I_h^V\boldsymbol{v})\ds=\dfrac{1}{\#\mathcal{T}_F}\sum_{T\in\mathcal{T}_F}\int_{F}Q_{1,T}(\div\boldsymbol{v})\ds=\int_{F}I_h^Q(\div\boldsymbol{v})\ds \quad \forall~F\in\mathcal{F}_h.
\]
Using (\ref{interpolation-Jh2}), integration by parts, and (\ref{interpolation-I2}),
\begin{equation*}
\int_{T}(\div(I_h^V\boldsymbol{v})-I_h^Q(\div\boldsymbol{v}))\dx=\int_{T}\div(I_h^V\boldsymbol{v}-\boldsymbol{v})\dx=0\quad \forall~T\in\mathcal{T}_h.
\end{equation*}
Hence (\ref{commutative1}) holds from the last two equations and Lemma~\ref{lem:ncfmQunisol}.
\end{proof}

\begin{corollary}
The following div-surjectivity holds
\begin{equation}\label{divontodiscrete}
\div V_h=Q_h.	
\end{equation}
\end{corollary}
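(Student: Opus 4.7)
The plan is to prove the two inclusions $\div V_h\subseteq Q_h$ and $Q_h\subseteq \div V_h$ separately, using the local identity \eqref{eq:divVT} together with the commutative property in Lemma~\ref{commutative} and the uni-solvency result in Lemma~\ref{lem:ncfmQunisol}.

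For the inclusion $\div V_h\subseteq Q_h$, take any $\boldsymbol{v}_h\in V_h$. On each $T\in\mathcal{T}_h$, the local identity \eqref{eq:divVT} immediately gives $\div\boldsymbol{v}_h|_T\in\mathbb{P}_1(T)\oplus\operatorname{span}\{b_T^{\textrm{NC}}\}$. The single-valuedness of DoF~\eqref{dof4} across interior faces, which is built into the definition of $V_h$, is exactly the single-valuedness of DoF~\eqref{dof1-q} for $\div\boldsymbol{v}_h$. Finally, $V_h\subset H_0(\div,\Omega)$ together with the divergence theorem yields $\int_\Omega \div\boldsymbol{v}_h\dx=0$, i.e., $\div\boldsymbol{v}_h\in L_0^2(\Omega)$. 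Hence $\div\boldsymbol{v}_h\in Q_h$.

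For the reverse inclusion, given $q_h\in Q_h\subset L_0^2(\Omega)$, invoke the classical surjectivity of $\div:H_0^1(\Omega;\mathbb{R}^d)\to L_0^2(\Omega)$ on bounded Lipschitz domains to produce $\boldsymbol{v}\in H_0^1(\Omega;\mathbb{R}^d)$ with $\div\boldsymbol{v}=q_h$. The commutative identity \eqref{commutative1} then gives
\begin{equation*}
\div(I_h^V\boldsymbol{v})=I_h^Q(\div\boldsymbol{v})=I_h^Q q_h,
\end{equation*}
so the task reduces to showing $I_h^Q q_h=q_h$. Since $q_h|_T\in\mathbb{P}_1(T)\oplus\operatorname{span}\{b_T^{\textrm{NC}}\}\subset\mathbb{P}_{3d}(T)$, we have $Q_{3d,T}q_h=q_h|_T$; combined with the single-valuedness of $\int_F q_h\ds$ on interior faces (which is part of the definition of $Q_h$), definition \eqref{interpolation-Jh1} gives $\int_F I_h^Q q_h\ds=\int_F q_h\ds$, while \eqref{interpolation-Jh2} gives $\int_T I_h^Q q_h\dx=\int_T q_h\dx$. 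Thus $I_h^Q q_h$ and $q_h$ share the same DoFs \eqref{dof-q}, and Lemma~\ref{lem:ncfmQunisol} forces $I_h^Q q_h=q_h$, completing the proof.

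There is no real obstacle here: the content of the statement has already been packaged into Lemmas~\ref{lem:ncfmQunisol} and~\ref{commutative} and identity \eqref{eq:divVT}. The one non-trivial external ingredient is the Bogovski\u{\i}-type surjectivity of the divergence from $H_0^1(\Omega;\mathbb{R}^d)$ onto $L_0^2(\Omega)$, which is available under the standing Lipschitz assumption on $\Omega$.
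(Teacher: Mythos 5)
Your proof is correct and follows essentially the same route as the paper: the easy inclusion $\div V_h\subseteq Q_h$ from the local divergence identity, then surjectivity of $\div:H_0^1(\Omega;\mathbb{R}^d)\to L_0^2(\Omega)$ combined with the commutativity \eqref{commutative1} for the reverse inclusion. The only difference is that you spell out the verification that $I_h^Q q_h=q_h$ via the DoFs \eqref{dof-q} and Lemma~\ref{lem:ncfmQunisol}, a step the paper leaves implicit.
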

\begin{proof}
By \eqref{bTNCpropdiv}, $\div V_h\subseteq Q_h$.
For $q_h\in Q_h\subset L_0^2(\Omega)$, there exists a $\boldsymbol{v}\in H^1_0(\Omega;\mathbb{R}^d)$ such that $\div\boldsymbol{v}=q_h$. Then $I_h^V\boldsymbol{v}\in V_h$, and by \eqref{commutative1} we have $\div(I_h^V\boldsymbol{v})=I_h^Q(\div\boldsymbol{v})=I_h^Qq_h=q_h$.
\end{proof}

\begin{remark}\rm
The div-surjectivity~\eqref{divontodiscrete} is equivalent to the exactness of the finite element complex
\[
V_h \xrightarrow{\div} Q_h \xrightarrow{} 0.
\]
This complex may be viewed as a nonconforming discretization of the continuous complex
\[
H^2(\Omega;\mathbb{R}^d)\cap H_0^1(\Omega;\mathbb{R}^d)
\xrightarrow{\div}
H^1(\Omega)\cap L_0^2(\Omega)
\xrightarrow{} 0.
\]
In contrast to the discrete complex, the continuous complex is not exact in general; in the two-dimensional polygonal case, see \cite[Theorem~3.1]{ArnoldScottVogelius1988}. This distinction provides one motivation for using a nonconforming discretization.
\end{remark}

\begin{lemma}
\label{interpolation-Jh-lemma}
For integers $0\leq m\leq 2$ and $j=0,1$, we have for any $\boldsymbol{v}\in H^1_0(\Omega;\mathbb{R}^d)$ satisfying $\div\boldsymbol{v}\in H^{s}(\Omega)$ that
\begin{align}
\label{Jh-error1}
|\div\boldsymbol{v}-\div(I_h^V\boldsymbol{v})|_{m,T}&\lesssim h_T^{s-m}|\div\boldsymbol{v}|_{s,\omega_T}
\qquad\; \textrm{ for } m\leq s\leq 2, \\
\label{Jh-error11}
\|\partial_n^j\div(\boldsymbol{v}-I_h^V\boldsymbol{v})\|_{0,\partial T}&\lesssim h_T^{s-j-1/2}|\div\boldsymbol{v}|_{s,\omega_T}
\quad \textrm{ for } j+1\leq s\leq 2. 
\end{align}
\end{lemma}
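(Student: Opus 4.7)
The plan is to pass from $\div(I_h^V\boldsymbol{v})$ to $I_h^Q(\div\boldsymbol{v})$ via the commutative diagram already established, then prove the desired estimates directly for the scalar interpolant $I_h^Q$. Setting $q=\div\boldsymbol{v}\in L^2_0(\Omega)$, Lemma~\ref{commutative} gives $\div(I_h^V\boldsymbol{v})=I_h^Qq$, so \eqref{Jh-error1} reduces to showing $|q-I_h^Qq|_{m,T}\lesssim h_T^{s-m}|q|_{s,\omega_T}$ for $m\leq s\leq 2$, and \eqref{Jh-error11} will follow from a standard trace inequality applied to \eqref{Jh-error1}.

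To estimate $q-I_h^Qq$ on $T$, I would introduce an auxiliary \emph{purely local} interpolant $\tilde I_T\colon L^2(T)\to \mathbb P_1(T)\oplus\mathrm{span}\{b_T^{\mathrm{NC}}\}$ defined by matching the true DoFs of $q$, i.e.\ $\int_F\tilde I_Tq\,\mathrm{d}s=\int_F q\,\mathrm{d}s$ for $F\in\Delta_{d-1}(T)$ and $\int_T\tilde I_T q\,\mathrm{d}x=\int_T q\,\mathrm{d}x$; uni-solvence (Lemma~\ref{lem:ncfmQunisol}) makes this well-defined. Since $\tilde I_T$ preserves $\mathbb P_1(T)$ (by uni-solvence applied to $p-\tilde I_Tp$ for $p\in\mathbb P_1$) and is bounded between the relevant Sobolev norms on the reference simplex, Bramble--Hilbert together with affine scaling yields $|q-\tilde I_Tq|_{m,T}\lesssim h_T^{s-m}|q|_{s,T}$ for $m\leq s\leq 2$.

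The remaining task is to bound $\tilde I_Tq-I_h^Q q$ on $T$. These two pieces have the \emph{same} cell DoF (namely $\int_T q\,\mathrm{d}x$), so they differ only through the face DoFs, with the discrepancy on $F\in\Delta_{d-1}(T)$ equal to $\alpha_F:=\int_F q\,\mathrm{d}s-\frac{1}{\#\mathcal T_F}\sum_{T'\in\mathcal T_F}\int_F Q_{3d,T'}q\,\mathrm{d}s=\frac{1}{\#\mathcal T_F}\sum_{T'\in\mathcal T_F}\int_F(q-Q_{3d,T'}q)\,\mathrm{d}s$. Cauchy--Schwarz, the multiplicative trace inequality, and the standard error estimate $\|q-Q_{3d,T'}q\|_{0,T'}+h_{T'}|q-Q_{3d,T'}q|_{1,T'}\lesssim h_{T'}^{s}|q|_{s,T'}$ give $|\alpha_F|\lesssim h^{s+(d-1)/2-1/2}|q|_{s,\omega_F}$. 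A scaling argument on the reference finite element then converts this face-DoF bound into $\|\tilde I_Tq-I_h^Qq\|_{0,T}\lesssim h_T^{s}|q|_{s,\omega_T}$, and the inverse inequality on the finite-dimensional shape function space upgrades it to $|\tilde I_Tq-I_h^Qq|_{m,T}\lesssim h_T^{s-m}|q|_{s,\omega_T}$ for $m=1,2$. Combining with the Bramble--Hilbert bound on $q-\tilde I_Tq$ by the triangle inequality establishes \eqref{Jh-error1}.

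Finally, for \eqref{Jh-error11} I would apply the multiplicative trace inequality $\|\partial_n^j w\|_{0,\partial T}\lesssim h_T^{-1/2}|w|_{j,T}+h_T^{1/2}|w|_{j+1,T}$ with $w=q-I_h^Qq$, and insert \eqref{Jh-error1} with $m=j$ and $m=j+1$ respectively, which is exactly compatible with the hypothesis $j+1\leq s\leq 2$. The main technical obstacle is keeping careful track of the cross-element averaging in the definition of $I_h^Q$ on interior faces: the DoF-difference $\alpha_F$ is an average over the face patch, so the right-hand side naturally lives on $\omega_T$ (or $\omega_F$) rather than $T$, and one must verify that the powers of $h$ gained from the trace inequality and $L^2$-projection error combine correctly with the scaling of the nodal basis to yield precisely $h_T^{s}$ in the $L^2$ bound of $\tilde I_Tq-I_h^Qq$.
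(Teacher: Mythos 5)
Your argument is correct and follows essentially the same route as the paper: the paper likewise reduces \eqref{Jh-error1} to the estimate $|q-I_h^Qq|_{m,T}\lesssim h_T^{s-m}|q|_{s,\omega_T}$, proved by the scaling/local-comparison argument of Lemma \ref{interpolation-I-lemma1}, and then invokes the commutativity \eqref{commutative1}, with \eqref{Jh-error11} obtained from the trace inequality exactly as you do. The only small caveat is that your auxiliary interpolant $\tilde I_T$ uses face integrals of $q$ itself and hence needs $s>1/2$; for the remaining low-regularity range of \eqref{Jh-error1} (only $m=0$ with small $s$) one bounds the DoFs of $I_h^Qq$ directly through $Q_{3d,T'}q$, whose face traces are polynomial, which gives the required $L^2$-type stability without modifying the rest of your argument.
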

\begin{proof}
Let $q = \div\boldsymbol{v}\in L^2_0(\Omega)\cap H^{s}(\Omega)$, and $w=(I_h^Q q)|_T-Q_{1,T}q\in \mathbb{P}_{1}(T)\oplus \textrm{span}\{b_T^{\mathrm{NC}}\}$. Note that the DoF \eqref{dof2-q} of $w$ vanishes, while the DoF \eqref{dof1-q} vanishes on boundary faces. By scaling arguments, the definition of $I_h^Q$ and the trace inequality,
\begin{equation*}
\|w\|^2_{0,T} \lesssim h_T\sum_{F\in\Delta_{d-1}(T)\cap\mathring{\mathcal{F}}_h}\|Q_{0,F}w\|^2_{0,F}
\lesssim h_T\sum_{F\in\Delta_{d-1}(T)\cap\mathring{\mathcal{F}}_h}\|[\![Q_{1,T}q]\!]\|^2_{0,F}.
\end{equation*}
Applying the inverse inequality and the error estimates of $Q_{1,T}$, we get
\begin{equation*}
h^{m}_T|I_h^Q q-Q_{1,T}q|_{m,T}\lesssim \|w\|_{0,T}\lesssim h_T^{s}|q|_{s,\omega_T},
\end{equation*}
which combined with the triangle inequality, the error estimate of $Q_{1,T}$ and \eqref{commutative1} yields \eqref{Jh-error1}.

By the trace inequality,
\begin{equation*}
\|\partial_n^j\div(\boldsymbol{v}-I_h^V\boldsymbol{v})\|_{0,\partial T}\lesssim h_T^{-1/2}|\div(\boldsymbol{v}-I_h^V\boldsymbol{v})|_{j,T} + h_T^{1/2}|\div(\boldsymbol{v}-I_h^V\boldsymbol{v})|_{j+1,T}.
\end{equation*}
Therefore, \eqref{Jh-error11} holds from \eqref{Jh-error1}.
\end{proof}

\begin{lemma}
\label{interpolation-I-lemma2}
Let $\boldsymbol{u}\in H_0^2(\Omega;\mathbb{R}^d)\cap H^3(\Omega;\mathbb{R}^d)$ be the solution of problem \eqref{SGE0}, and the solution $\boldsymbol{u}_0\in H^1_0(\Omega;\mathbb{R}^d)$ of problem \eqref{SGElinear} satisfy the regularity \eqref{elasregularity} with $2\leq s\leq 3$.
We have
\begin{align}
\label{Ih-error3}
\normmm{\boldsymbol{u}-I_h^V\boldsymbol{u}}_{\iota,\lambda,h} & \lesssim \iota h(|\boldsymbol{u}|_3+\sqrt{\lambda}|\div\boldsymbol{u}|_2)+ h(|\boldsymbol{u}|_2+\sqrt{\lambda}|\div\boldsymbol{u}|_1), \\ 
\label{Ih-error2}
\normmm{\boldsymbol{u}-I_h^V\boldsymbol{u}}_{\iota,\lambda,h} & \lesssim\iota^{1/2}\|\boldsymbol{f}\|_0+h^{s-1}\|\boldsymbol{f}\|_{s-2}.
\end{align}
\end{lemma}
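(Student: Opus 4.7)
The plan is to handle the two estimates in turn; \eqref{Ih-error3} is essentially a direct summation of the four constituent pieces of the energy norm, whereas \eqref{Ih-error2} requires a boundary-layer decomposition through the linear elasticity solution $\boldsymbol{u}_0$.

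For \eqref{Ih-error3}, I would write out the four pieces of $\normmm{\boldsymbol{u}-I_h^V\boldsymbol{u}}^2_{\iota,h}$ and apply the interpolation error bounds from Lemmas~\ref{interpolation-I-lemma1} and \ref{interpolation-Jh-lemma}: the terms $\|\boldsymbol{\varepsilon}_h(\cdot)\|_0$ and $\|\div(\cdot)\|_0$ use \eqref{Ih-error1} with $(m,s)=(1,2)$ and \eqref{Jh-error1} with $(m,s)=(0,1)$; the broken $H^1$-seminorms inside the $\iota^2$ piece use the same lemmas with $(m,s)=(2,3)$ and $(m,s)=(1,2)$; the boundary jump contributions use \eqref{Ih-error11} with $(j,s)=(0,3)$ and \eqref{Jh-error11} with $(j,s)=(0,2)$. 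Each pair produces an $h^2$ factor after summation over $T\in\mathcal{T}_h$, and taking square roots yields \eqref{Ih-error3}.

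For \eqref{Ih-error2} the $\iota^2$-part and the $L^2$-part of the norm require different arguments. The $\iota^2$-part is bounded by the same four-piece decomposition as above, but using only the $s=2$ estimates \eqref{Ih-error1}, \eqref{Ih-error11} and the $s=1$ estimates \eqref{Jh-error1}, \eqref{Jh-error11}; this produces the combination $\iota|\boldsymbol{u}|_2+\iota\sqrt{\lambda}|\div\boldsymbol{u}|_1$, which the regularity bounds \eqref{Regularity-u} and \eqref{Regularity-divu} collapse into $\iota^{1/2}\|\boldsymbol{f}\|_0$. For the $L^2$-part I would use the splitting $\boldsymbol{u}-I_h^V\boldsymbol{u}=(\boldsymbol{u}-\boldsymbol{u}_0)+(\boldsymbol{u}_0-I_h^V\boldsymbol{u}_0)+I_h^V(\boldsymbol{u}_0-\boldsymbol{u})$. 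The first summand is bounded by $\iota^{1/2}\|\boldsymbol{f}\|_0$ directly via \eqref{Regularity-u} and \eqref{Regularity-divu}; the second summand is bounded by $h^{s-1}(|\boldsymbol{u}_0|_s+\sqrt{\lambda}|\div\boldsymbol{u}_0|_{s-1})\lesssim h^{s-1}\|\boldsymbol{f}\|_{s-2}$ via Lemmas~\ref{interpolation-I-lemma1} and \ref{interpolation-Jh-lemma} combined with the linear-elasticity regularity \eqref{elasregularity}; the third summand is handled by the $H^1$-stability $|I_h^V\boldsymbol{w}|_1\lesssim |\boldsymbol{w}|_1$, obtained from \eqref{Ih-error1} with $m=s=1$ and the triangle inequality, together with the commutativity $\div I_h^V=I_h^Q\div$ of Lemma~\ref{commutative} and the $L^2$-stability $\|I_h^Q q\|_0\lesssim \|q\|_0$, which yields $\sqrt{\lambda}\|\div I_h^V(\boldsymbol{u}_0-\boldsymbol{u})\|_0\lesssim \sqrt{\lambda}\|\div(\boldsymbol{u}_0-\boldsymbol{u})\|_0\lesssim \iota^{1/2}\|\boldsymbol{f}\|_0$.

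The main obstacle is precisely the $L^2$-part of \eqref{Ih-error2}: a naive use of \eqref{Ih-error3} gives terms like $h|\boldsymbol{u}|_2$ which blow up as $\iota^{-1/2}$ when $\iota\to 0$, so one cannot avoid the boundary-layer splitting through $\boldsymbol{u}_0$. The splitting separates the uniformly controllable boundary-layer contribution $\boldsymbol{u}-\boldsymbol{u}_0$, bounded via the regularity estimates, from the regular part $\boldsymbol{u}_0$, which enjoys the full $H^s$-regularity with $\iota$-independent constants. The small technical ingredient not spelled out in the preceding lemmas is the $L^2$-stability of $I_h^Q$, which I would verify by a standard scaling argument based on the DoFs \eqref{dof-q} on the reference simplex before invoking it here.
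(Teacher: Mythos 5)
Your proposal is correct and follows essentially the paper's argument: \eqref{Ih-error3} by direct summation of the interpolation estimates, and \eqref{Ih-error2} by treating the $\iota$-weighted part with the lower-order ($s=2$, $s=1$) estimates plus the regularity \eqref{Regularity-u}--\eqref{Regularity-divu}, and the $L^2$ part through the splitting via $\boldsymbol{u}_0$ together with \eqref{elasregularity}. Your three-term splitting is algebraically the same as the paper's grouping $(\boldsymbol{u}-\boldsymbol{u}_0)-I_h^V(\boldsymbol{u}-\boldsymbol{u}_0)$ plus $\boldsymbol{u}_0-I_h^V\boldsymbol{u}_0$, and the $L^2$-stability of $I_h^Q$ you propose to verify is already contained in \eqref{Jh-error1} with $s=m$ combined with \eqref{commutative1}, so no extra scaling argument is needed.
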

\begin{proof}
Using (\ref{Ih-error1})-(\ref{Ih-error11}) and (\ref{Jh-error1})-(\ref{Jh-error11}), we acquire the estimate (\ref{Ih-error3}),
\begin{align}
\begin{split}
\label{interpolation-I-lemma2-pf}
\|\boldsymbol{\sigma}_h(\boldsymbol{u}-I_h^V\boldsymbol{u})\|_{0}&\leq \|\boldsymbol{\sigma}_h((\boldsymbol{u}-\boldsymbol{u}_0)-I_h^V(\boldsymbol{u}-\boldsymbol{u}_0))\|_{0}+\|\boldsymbol{\sigma}_h(\boldsymbol{u}_0-I_h^V\boldsymbol{u}_0)\|_{0} \\
&\lesssim |\boldsymbol{u}-\boldsymbol{u}_0|_1 + \lambda\|\div(\boldsymbol{u}-\boldsymbol{u}_0)\|_0 \\
&\quad +h^{s-1}|\boldsymbol{u}_0|_s+\lambda h^{s-1}|\div\boldsymbol{u}_0|_{s-1}, \\
\end{split}
\end{align}
and
\begin{align*}
\mathopen{\interleave}\boldsymbol{\sigma}_h(\boldsymbol{u}-I_h^V\boldsymbol{u})\mathclose{\interleave}_{1,h}
&\lesssim \mathopen{\interleave}\boldsymbol{\varepsilon}_h(\boldsymbol{u}-I_h^V\boldsymbol{u})\mathclose{\interleave}_{1,h} +
\lambda\mathopen{\interleave}\div(\boldsymbol{u}-I_h^V\boldsymbol{u})\mathclose{\interleave}_{1,h} \\
&\lesssim|\boldsymbol{u}|_2+\lambda|\div\boldsymbol{u}|_1.
\end{align*}
Then (\ref{Ih-error2}) holds from the last two inequalities and (\ref{elasregularity})-(\ref{Regularity-divu}).
\end{proof}

\subsection{Nonconforming finite element complexes}
The finite element spaces $V_h$ and $Q_h$ constructed above can be applied to nonconforming finite element discretizations of the smooth Stokes complexes in two and three dimensions.

In two dimensions, after introducing a suitable $H^3$-nonconforming finite element space $W_h$, we obtain the exact discrete complex
\[
0\xrightarrow{\subset} W_h\xrightarrow{\curl}  V_h
\xrightarrow{\div} Q_h \xrightarrow {}0.
\]
In three dimensions, after introducing an $H(\nabla^2\curl)$-nonconforming finite element space $W_h$, we obtain the exact discrete complex
\[
0\xrightarrow{\subset}V_h^L\xrightarrow{\nabla} W_h \xrightarrow{\curl}  V_h
\xrightarrow{\div} Q_h \xrightarrow {}0.
\]
The definitions of spaces, the associated interpolation operators, and the proofs of exactness and commutativity are given in Appendix~\ref{app:complexes}.

\section{Nonconforming finite element method}\label{sec4}
In this section, we will utilize the nonconforming finite elements constructed in the last section and Nitsche's technique to develop an optimal and robust nonconforming finite element method for the SGE model \eqref{SGE0}.

\subsection{Discrete formulation}
Through applying Nitsche's technique \cite{MR0341903,Schieweck2008,MR2917211}, i.e. imposing the boundary condition $\partial_{n}\boldsymbol{u} = 0$ weakly,
we propose the following nonconforming
finite element method for weak formulation \eqref{weak1}: find $\boldsymbol{u}_h\in V_h$ such that
\begin{align}\label{discrete-fem-weak}
\iota^2a_h(\boldsymbol{u}_h,\boldsymbol{v}_h)+b_h(\boldsymbol{u}_h,\boldsymbol{v}_h)=(\boldsymbol{f},\boldsymbol{v}_h) \quad \forall \ \boldsymbol{v}_h\in V_h,
\end{align} 
where 
\begin{align*}
a_h(\boldsymbol{u}_h,\boldsymbol{v}_h)&:=(\nabla_h\boldsymbol{\sigma}_h(\boldsymbol{u}_h),\nabla_h\boldsymbol{\varepsilon}_h(\boldsymbol{v}_h))-\sum_{F\in\mathcal F_h^{\partial}}(\boldsymbol{\sigma}_h(\boldsymbol{u}_h),\partial_{n}(\boldsymbol{\varepsilon}_h(\boldsymbol{v}_h)))_F\\
&\quad\;\;-\sum_{F\in\mathcal F_h^{\partial}}(\partial_{n}(\boldsymbol{\sigma}_h(\boldsymbol{u}_h)),\boldsymbol{\varepsilon}_h(\boldsymbol{v}_h))_F + \eta\sum_{F\in\mathcal F_h^{\partial}}h^{-1}_F(\boldsymbol{\sigma}_h(\boldsymbol{u}_h),\boldsymbol{\varepsilon}_h(\boldsymbol{v}_h))_F,\\
b_h(\boldsymbol{u}_h,\boldsymbol{v}_h)&:=(\boldsymbol{\sigma}_h(\boldsymbol{u}_h), \boldsymbol{\varepsilon}_h(\boldsymbol{v}_h)).
\end{align*}
Notice that $a_h(\boldsymbol{u}_h,\boldsymbol{v}_h)=(\boldsymbol{\sigma}_h(\boldsymbol{u}_h), \boldsymbol{\varepsilon}_h(\boldsymbol{v}_h))_{1,h}$.

\begin{lemma}
We have the discrete Korn's inequality
\begin{equation}\label{discreteKorn}
\|\boldsymbol{v}\|_{1,h}\lesssim \| \boldsymbol{\varepsilon}_h(\boldsymbol{v})\|_{0} \quad \forall~\boldsymbol{v}\in  V_h,
\end{equation}
and the discrete $H^2$-Korn's inequality
\begin{equation}\label{discreteKornH2}
\|\boldsymbol{v}\|_{2,h}\lesssim \mathopen{\interleave} \boldsymbol{\varepsilon}_h(\boldsymbol{v})\mathclose{\interleave}_{1,h}  \quad \forall~\boldsymbol{v}\in  V_h.
\end{equation}
\end{lemma}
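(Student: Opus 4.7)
The plan is to bootstrap two standard broken-norm inequalities---a Brenner-type piecewise Korn inequality and a discrete Poincar\'{e}-Friedrichs inequality---and let the weak continuity built into $V_h$ kill the jump penalty terms.

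For \eqref{discreteKorn} I will invoke the piecewise Korn's second inequality: for any piecewise $H^1$ vector field $\boldsymbol{w}$ on a shape-regular triangulation,
\[
\|\boldsymbol{w}\|_{1,h}^2 \lesssim \|\boldsymbol{\varepsilon}_h(\boldsymbol{w})\|_{0}^2 + \sum_{F\in\mathcal F_h} h_F^{-1}\|Q_{0,F}[\![\boldsymbol{w}]\!]\|_{0,F}^2,
\]
with the convention $[\![\boldsymbol{w}]\!]|_F=\boldsymbol{w}|_F$ on boundary faces. For $\boldsymbol{v}\in V_h$ the weak continuity \eqref{weak-continuity-vh2} holds on \emph{every} face---interior faces by single-valuedness of DoFs \eqref{dof1}-\eqref{dof3}, boundary faces because those same DoFs vanish there---so $Q_{0,F}[\![\boldsymbol{v}]\!]=0$ for every $F\in\mathcal F_h$. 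The jump sum drops out and \eqref{discreteKorn} follows.

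For \eqref{discreteKornH2} I split $\|\boldsymbol{v}\|_{2,h}^2 = \|\boldsymbol{v}\|_{1,h}^2+|\boldsymbol{v}|_{2,h}^2$ and bound each part. The pointwise algebraic identity
\[
\partial_j\partial_k v_i = \partial_j\varepsilon_{ik}(\boldsymbol{v}) + \partial_k\varepsilon_{ij}(\boldsymbol{v}) - \partial_i\varepsilon_{jk}(\boldsymbol{v}),
\]
valid on each element, immediately gives $|\boldsymbol{v}|_{2,h} \lesssim |\boldsymbol{\varepsilon}_h(\boldsymbol{v})|_{1,h} \leq \interleave\boldsymbol{\varepsilon}_h(\boldsymbol{v})\interleave_{1,h}$. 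For the lower-order piece, \eqref{discreteKorn} reduces the task to controlling $\|\boldsymbol{\varepsilon}_h(\boldsymbol{v})\|_0$ by $\interleave\boldsymbol{\varepsilon}_h(\boldsymbol{v})\interleave_{1,h}$. I would apply the discrete Poincar\'{e}-Friedrichs inequality
\[
\|w\|_0^2 \lesssim |w|_{1,h}^2 + \sum_{F\in\mathcal F_h} h_F^{-1}\|Q_{0,F}[\![w]\!]\|_{0,F}^2
\]
with $w=\boldsymbol{\varepsilon}_h(\boldsymbol{v})$. On interior faces \eqref{weak-continuity-vh1} gives $Q_{0,F}[\![\nabla_h\boldsymbol{v}]\!]=0$, so $Q_{0,F}[\![\boldsymbol{\varepsilon}_h(\boldsymbol{v})]\!] = \sym Q_{0,F}[\![\nabla_h\boldsymbol{v}]\!] = 0$; on boundary faces $h_F^{-1}\|Q_{0,F}\boldsymbol{\varepsilon}_h(\boldsymbol{v})\|_{0,F}^2$ is already one of the summands defining $\interleave\boldsymbol{\varepsilon}_h(\boldsymbol{v})\interleave_{1,h}^2$. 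Summing the two estimates delivers \eqref{discreteKornH2}.

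The main obstacle is really bookkeeping rather than analysis: identifying a reference (or supplying a short derivation) for the piecewise Korn and discrete Poincar\'{e}-Friedrichs inequalities in the forms above, and verifying through \eqref{weak-continuity-vh1}-\eqref{weak-continuity-vh2} and the projected-jump equivalence \eqref{eq:brokenH1seminormequiv} that every penalty term they produce is indeed absorbed by the right-hand side. Once that matching is done the chain of estimates is routine.
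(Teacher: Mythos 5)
Your second half (the $H^2$ bound) is essentially the paper's own argument and is sound: the elementwise identity $\partial_j\partial_k v_i=\partial_j\varepsilon_{ik}+\partial_k\varepsilon_{ij}-\partial_i\varepsilon_{jk}$ gives $|\boldsymbol{v}|_{2,h}\lesssim|\boldsymbol{\varepsilon}_h(\boldsymbol{v})|_{1,h}$, and the scalar (componentwise) discrete Poincar\'e--Friedrichs inequality of Brenner with mean-value jump terms, combined with \eqref{weak-continuity-vh1} on interior faces and absorption of the boundary-face terms into $\interleave\boldsymbol{\varepsilon}_h(\boldsymbol{v})\interleave_{1,h}$, gives $\|\boldsymbol{\varepsilon}_h(\boldsymbol{v})\|_0\lesssim\interleave\boldsymbol{\varepsilon}_h(\boldsymbol{v})\interleave_{1,h}$; together with \eqref{discreteKorn} this yields \eqref{discreteKornH2}.

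The genuine gap is the auxiliary inequality you use for \eqref{discreteKorn}. The ``piecewise Korn's second inequality'' with only the constant projections of the jumps,
\begin{equation*}
\|\boldsymbol{w}\|_{1,h}^2\lesssim\|\boldsymbol{\varepsilon}_h(\boldsymbol{w})\|_0^2+\sum_{F\in\mathcal F_h}h_F^{-1}\|Q_{0,F}[\![\boldsymbol{w}]\!]\|_{0,F}^2,
\end{equation*}
is \emph{false} for general piecewise $H^1$ vector fields. If it were true it would imply the discrete Korn inequality for the Crouzeix--Raviart space with its usual (midpoint/mean) boundary conditions, since CR fields have mean-zero jumps on every interior face and mean-zero traces on boundary faces; but the failure of discrete Korn for CR is classical (Falk; Hansbo--Larson), and is precisely the reason Brenner's piecewise Korn inequality penalizes the projection of the face jump onto rigid motions (equivalently, a subspace of $\mathbb{P}_1(F;\mathbb{R}^d)$), not merely onto constants: $Q_{0,F}$ cannot see the rotational discrepancy $(\skw\nabla\boldsymbol{w}|_{T^+}-\skw\nabla\boldsymbol{w}|_{T^-})(\boldsymbol{x}-\boldsymbol{x}_F)$ across a face. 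The repair is immediate and is exactly what the paper does: invoke Brenner's inequality (inequality (1.22) in the cited Korn paper, or its variant in Chen--Hu--Huang) whose face terms involve the projection of $[\![\boldsymbol{w}]\!]$ onto rigid motions/linear polynomials; for $\boldsymbol{v}\in V_h$ the weak continuity \eqref{weak-continuity-vh2} makes the jump orthogonal to all of $\mathbb{P}_1(F;\mathbb{R}^d)$ on every face (boundary faces included, since those DoFs vanish there), so these terms drop and \eqref{discreteKorn} follows. With that substitution your argument coincides with the paper's proof.
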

\begin{proof}
Thanks to (1.22) in \cite{MR2047078} and (3.8) in \cite{ChenHuHuang2018},
the discrete Korn's inequality \eqref{discreteKorn} follows from the weak continuity \eqref{weak-continuity-vh2}.

According to the inequality \eqref{eq:20260614}, it holds from the discrete Korn's inequality \eqref{discreteKorn} that
\begin{equation*}
\|\boldsymbol{v}\|_{2,h}\lesssim |\boldsymbol{\varepsilon}_h(\boldsymbol{v})|_{1,h}+\| \boldsymbol{\varepsilon}_h(\boldsymbol{v})\|_{0} \quad \forall~\boldsymbol{v}\in  V_h.
\end{equation*}
On the other side, employing the discrete Poincar\'e inequality in \cite[Remark 1.1]{MR1974504} and the weak continuity \eqref{weak-continuity-vh1}, we get
\begin{equation*}
\| \boldsymbol{\varepsilon}_h(\boldsymbol{v})\|_{0}\lesssim \mathopen{\interleave} \boldsymbol{\varepsilon}_h(\boldsymbol{v})\mathclose{\interleave}_{1,h} \quad\forall~\boldsymbol{v}\in V_h.
\end{equation*}
Combining the last two inequalities gives \eqref{discreteKornH2}.
\end{proof}

\begin{lemma}\label{normeq-lemma}
The following norm equivalence holds
\begin{align}
\label{normeq}\mathopen{\interleave}\boldsymbol{v}\mathclose{\interleave}^2_{\iota,\lambda,h}&\eqsim 2\mu\|\boldsymbol{v}\|^2_{1,h}+\lambda\|\div\boldsymbol{v}\|^2_{0}+\iota^2(2\mu\|\boldsymbol{v}\|^2_{2,h}+\lambda|\div\boldsymbol{v}|^2_{1,h})
\\
\notag&\quad\;+\iota^2\sum_{F\in\mathcal F_h}h^{-1}_F(2\mu\|[\![\boldsymbol{\varepsilon}_h(\boldsymbol{v})]\!]\|^2_{0,F}+\lambda\|[\![\div\boldsymbol{v}]\!]\|^2_{0,F}) \qquad\forall~\boldsymbol{v}\in V_h,
\end{align}
which implies that $\mathopen{\interleave}\cdot\mathclose{\interleave}_{\iota,\lambda,h}$ is a norm on space $V_h$.
\end{lemma}
\begin{proof}
By the weak continuity \eqref{weak-continuity-vh1}, we have for any $\boldsymbol{v}\in V_h$ that
\begin{equation*}
Q_{0,F}[\![\nabla_h\boldsymbol v]\!]=0
\qquad \forall \  F\in\mathring{\mathcal F}_h,
\end{equation*}
which combined with the discrete Korn's inequalities \eqref{discreteKorn}-\eqref{discreteKornH2} and the norm equivalence \eqref{eq:brokenH1seminormequiv} yields \eqref{normeq}.
\end{proof}


\begin{lemma}
\label{wellposed-lemma}
The nonconforming
finite element method \eqref{discrete-fem-weak} is well-posed, when $\eta$ is large enough.
\end{lemma}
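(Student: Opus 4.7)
The plan is to apply the finite-dimensional Lax–Milgram theorem on $V_h$ equipped with the norm $\interleave\cdot\interleave_{\iota,h}$, whose norm property on $V_h$ is already supplied by the preceding discussion together with the discrete Korn inequalities \eqref{discreteKorn}–\eqref{discreteKornH2}. Thus it suffices to establish, for $\eta$ sufficiently large, the boundedness and coercivity of $B_h(\boldsymbol{u}_h,\boldsymbol{v}_h):=\iota^2 a_h(\boldsymbol{u}_h,\boldsymbol{v}_h)+b_h(\boldsymbol{u}_h,\boldsymbol{v}_h)$ with respect to $\interleave\cdot\interleave_{\iota,h}$.

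The first step is to unfold $a_h(\boldsymbol{v}_h,\boldsymbol{v}_h)$. Using $\boldsymbol{\sigma}_h(\boldsymbol{v}_h)=2\mu\boldsymbol{\varepsilon}_h(\boldsymbol{v}_h)+\lambda(\div\boldsymbol{v}_h)\boldsymbol{I}$ and $\operatorname{tr}\boldsymbol{\varepsilon}_h(\boldsymbol{v}_h)=\div\boldsymbol{v}_h$, the volume and Nitsche pairings decouple into an $\boldsymbol{\varepsilon}$-part weighted by $2\mu$ and a $\div$-part weighted by $\lambda$. In particular
\begin{equation*}
(\nabla_h\boldsymbol{\sigma}_h(\boldsymbol{v}_h),\nabla_h\boldsymbol{\varepsilon}_h(\boldsymbol{v}_h))=2\mu|\boldsymbol{\varepsilon}_h(\boldsymbol{v}_h)|_{1,h}^2+\lambda|\div\boldsymbol{v}_h|_{1,h}^2,\qquad (\boldsymbol{\sigma}_h(\boldsymbol{v}_h),\boldsymbol{\varepsilon}_h(\boldsymbol{v}_h))_F=2\mu\|\boldsymbol{\varepsilon}_h(\boldsymbol{v}_h)\|_{0,F}^2+\lambda\|\div\boldsymbol{v}_h\|_{0,F}^2,
\end{equation*}
and the same decoupling applies to the two cross terms. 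Similarly, $b_h(\boldsymbol{v}_h,\boldsymbol{v}_h)=2\mu\|\boldsymbol{\varepsilon}_h(\boldsymbol{v}_h)\|_0^2+\lambda\|\div\boldsymbol{v}_h\|_0^2$.

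The second step is to absorb the two consistency terms $(\boldsymbol{\sigma}_h(\boldsymbol{v}_h),\partial_n\boldsymbol{\varepsilon}_h(\boldsymbol{v}_h))_F$ on boundary faces. I would treat the $\boldsymbol{\varepsilon}$-part and the $\div$-part separately, applying Cauchy–Schwarz, Young's inequality with parameter $\epsilon$, and the standard trace inverse inequality $h_F\|\partial_n q\|_{0,F}^2\lesssim|q|_{1,T}^2$ available because $\boldsymbol{\varepsilon}_h(\boldsymbol{v}_h)$ and $\div\boldsymbol{v}_h$ are piecewise polynomial. Choosing $\epsilon$ of order $\eta^{-1}$ and summing, the cross term is bounded by $(C_\ast/\eta)(2\mu|\boldsymbol{\varepsilon}_h(\boldsymbol{v}_h)|_{1,h}^2+\lambda|\div\boldsymbol{v}_h|_{1,h}^2)+\tfrac{\eta}{2}\sum_{F\in\mathcal F_h^\partial}h_F^{-1}(2\mu\|\boldsymbol{\varepsilon}_h(\boldsymbol{v}_h)\|_{0,F}^2+\lambda\|\div\boldsymbol{v}_h\|_{0,F}^2)$ for some $C_\ast$ independent of $h,\iota,\lambda,\mu$. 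Taking $\eta>2C_\ast$ yields
\begin{equation*}
a_h(\boldsymbol{v}_h,\boldsymbol{v}_h)\gtrsim 2\mu\interleave\boldsymbol{\varepsilon}_h(\boldsymbol{v}_h)\interleave_{1,h}^2+\lambda\interleave\div\boldsymbol{v}_h\interleave_{1,h}^2,
\end{equation*}
which, combined with the expression of $b_h(\boldsymbol{v}_h,\boldsymbol{v}_h)$, produces the desired coercivity $B_h(\boldsymbol{v}_h,\boldsymbol{v}_h)\gtrsim\interleave\boldsymbol{v}_h\interleave_{\iota,h}^2$ uniformly in $\iota$, $\lambda$.

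Boundedness of $B_h$ with respect to $\interleave\cdot\interleave_{\iota,h}$ follows from a term-by-term application of Cauchy–Schwarz to the volume integral, each Nitsche consistency integral (re-absorbed via the same trace inverse inequality into the broken $|\cdot|_{1,h}$ seminorm), and the penalty integral. A finite-dimensional Lax–Milgram argument then gives well-posedness. The main obstacle is the cross-term estimate, whose subtlety is that one must split the scaling by $\mu$ and by $\lambda$ before invoking Young's inequality; bounding $\|\boldsymbol{\sigma}_h(\boldsymbol{v}_h)\|_{0,F}$ by a single quantity would create a $\lambda^2$ factor and destroy robustness. Tracking the two parts separately keeps $C_\ast$ independent of $\lambda$, so the threshold on $\eta$ is itself uniform in all problem parameters.
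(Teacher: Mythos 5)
Your proposal is correct and follows essentially the same route as the paper: establish the $\lambda$-robust coercivity $a_h(\boldsymbol{v}_h,\boldsymbol{v}_h)\gtrsim 2\mu\interleave\boldsymbol{\varepsilon}_h(\boldsymbol{v}_h)\interleave_{1,h}^2+\lambda\interleave\div\boldsymbol{v}_h\interleave_{1,h}^2$ for $\eta$ large, deduce $\iota^2a_h(\boldsymbol{v}_h,\boldsymbol{v}_h)+b_h(\boldsymbol{v}_h,\boldsymbol{v}_h)\eqsim\interleave\boldsymbol{v}_h\interleave_{\iota,h}^2$, and invoke Lax--Milgram. The only difference is that you carry out the standard Nitsche coercivity argument (splitting into the $2\mu$- and $\lambda$-weighted parts, Young's inequality, trace inverse inequality) explicitly, whereas the paper cites this estimate from the literature.
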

\begin{proof}
There exists a constant $\eta_0>0$ depending only on the shape regularity of $\mathcal T_h$ such that for any fixed number $\eta\geq\eta_0$, it holds that (cf. \cite{EpshteynRiviere2007})
\begin{equation*}
2\mu\mathopen{\interleave} \boldsymbol{\varepsilon}_h(\boldsymbol{v})\mathclose{\interleave}_{1,h}^2 + \lambda\mathopen{\interleave}\div\boldsymbol{v}\mathclose{\interleave}^2_{1,h} \lesssim a_{h}(\boldsymbol{v}, \boldsymbol{v})  \quad \forall~\boldsymbol{v}\in  V_h.
\end{equation*}
Together with the coercivity of $b_h(\cdot,\cdot)$ and the continuity of the bilinear form, we obtain the equivalence
\begin{equation}\label{eq:abhelliptic}
\mathopen{\interleave}\boldsymbol{v}\mathclose{\interleave}^2_{\iota,\lambda,h}\eqsim \iota^2a_h(\boldsymbol{v}, \boldsymbol{v})+b_h(\boldsymbol{v}, \boldsymbol{v}) \quad \forall~\boldsymbol{v}\in  V_h.
\end{equation}
Hence, the well-posedness of the nonconforming finite element method \eqref{discrete-fem-weak} follows from \eqref{normeq} and the Lax-Milgram lemma \cite{Ciarlet1978}.
\end{proof}

\subsection{Error analysis}
Denote the Lagrange finite element space of quadratic order by
\[ 
V^L_h:=\{\boldsymbol{v}\in H^1_0(\Omega;\mathbb{R}^d): \boldsymbol{v}|_T\in\mathbb{P}_2(T;\mathbb{R}^d) \quad \forall~T\in\mathcal{T}_h \}.  
\]
Define a connection operator $E_h: V_h\rightarrow V^L_h$ as follows: for any $\boldsymbol{v}\in V_h$,  $E_h\boldsymbol{v}$ is determined by
\[
\mathcal{N}(E_h\boldsymbol{v}):=\dfrac{1}{\#\mathcal{T}_{\mathcal{N}}}\sum_{T\in\mathcal T_\mathcal{N}}\mathcal{N}(\boldsymbol{v}|_T)
\]
for each interior degree of freedom $\mathcal{N}$ of the space $ V^L_h$, where $\mathcal{T}_{\mathcal{N}}\subset\mathcal{T}_h$ denotes the set of simplices sharing the DoF $\mathcal{N}$. By the weak continuity \eqref{weak-continuity-vh2} of $ V_h$, we have (cf. \cite[Lemma 3.5]{Wang2001})
\begin{align}
\label{connection-Vh1}
|\boldsymbol{v}-E_h\boldsymbol{v}|_{m,T}\lesssim h_T^{s-m}|\boldsymbol{v}|_{s,\omega_T}\quad \forall \ \boldsymbol{v}\in V_h, 0\leq m\leq 1, 1\leq s\leq 2.
\end{align}

\begin{lemma}
\label{interpolation-I-lemma7}
Let $\boldsymbol{u}\in H_0^2(\Omega;\mathbb{R}^d)\cap H^3(\Omega;\mathbb{R}^d)$ be the solution of problem \eqref{SGE0}, and the solution $\boldsymbol{u}_0\in H^1_0(\Omega;\mathbb{R}^d)$ of problem \eqref{SGElinear} satisfies the regularity \eqref{elasregularity} with $2\leq s\leq 3$. We have for any $\boldsymbol{v}_h\in V_h$ that
\begin{align}
\label{Ih-error4}
&\quad\;\iota^2a_h(\boldsymbol{u}-I_h^V\boldsymbol{u},\boldsymbol{v}_h)+b_h(\boldsymbol{u}-I_h^V\boldsymbol{u},\boldsymbol{v}_h) \\
\notag
&\qquad\qquad\qquad\qquad\qquad\lesssim \iota^{-1/2}h\|\boldsymbol{f}\|_0(\|\boldsymbol{\varepsilon}_h(\boldsymbol{v}_h)\|_{0}+\iota\mathopen{\interleave} \boldsymbol{\varepsilon}_h(\boldsymbol{v}_h)\mathclose{\interleave}_{1,h}), \\
\label{Ih-error5}
&\quad\;\iota^2a_h(\boldsymbol{u}-I_h^V\boldsymbol{u},\boldsymbol{v}_h)+b_h(\boldsymbol{u}-I_h^V\boldsymbol{u},\boldsymbol{v}_h) \\
\notag
&\qquad\qquad\qquad\qquad\qquad\lesssim(\iota^{1/2}\|\boldsymbol{f}\|_0+h^{s-1}\|\boldsymbol{f}\|_{s-2})\|\boldsymbol{\varepsilon}_h(\boldsymbol{v}_h)\|_0.
\end{align}
\end{lemma}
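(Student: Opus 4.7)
Write $\boldsymbol{w}:=\boldsymbol{u}-I_h^V\boldsymbol{u}$. The plan is to bound $b_h(\boldsymbol{w},\boldsymbol{v}_h)$ and $\iota^2 a_h(\boldsymbol{w},\boldsymbol{v}_h)$ separately by Cauchy--Schwarz against two distinct ``sizes'' of $\boldsymbol{\sigma}_h(\boldsymbol{w})$, and then insert the interpolation-error bounds already extracted (explicitly or implicitly) in the proof of Lemma~\ref{interpolation-I-lemma2}, together with the regularity estimates \eqref{Regularity-u}--\eqref{Regularity-divu}. First, the pointwise identity $\boldsymbol{\sigma}\!:\!\boldsymbol{\varepsilon}=2\mu\,\boldsymbol{\varepsilon}\!:\!\boldsymbol{\varepsilon}+\lambda(\div)(\div)$ and $|\div\boldsymbol{v}_h|\leq\sqrt d\,|\boldsymbol{\varepsilon}(\boldsymbol{v}_h)|$ produce the continuity estimate $b_h(\boldsymbol{w},\boldsymbol{v}_h)\lesssim \|\boldsymbol{\sigma}_h(\boldsymbol{w})\|_0\,\|\boldsymbol{\varepsilon}_h(\boldsymbol{v}_h)\|_0$. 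Second, the identity $a_h(\cdot,\cdot)=(\boldsymbol{\sigma}_h(\cdot),\boldsymbol{\varepsilon}_h(\cdot))_{1,h}$ recorded just after the definition of $a_h$, combined with Cauchy--Schwarz applied to every term of the broken $H^1$-Nitsche inner product and discrete trace/inverse estimates to control the face integrals on $\mathcal F_h^\partial$, yields the $\lambda$-uniform bound $a_h(\boldsymbol{w},\boldsymbol{v}_h)\lesssim \interleave\boldsymbol{\sigma}_h(\boldsymbol{w})\interleave_{1,h}\,\interleave\boldsymbol{\varepsilon}_h(\boldsymbol{v}_h)\interleave_{1,h}$.

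For \eqref{Ih-error4}, I apply the interpolation-error estimates \eqref{Ih-error1}, \eqref{Ih-error11}, \eqref{Jh-error1}, \eqref{Jh-error11} using the full $H^3$-regularity of $\boldsymbol{u}$, combined with the triangle inequality $\|\boldsymbol{\sigma}_h(\boldsymbol{z})\|_0\lesssim \mu\|\boldsymbol{\varepsilon}_h(\boldsymbol{z})\|_0+\lambda\|\div\boldsymbol{z}\|_0$, to obtain
\[
\|\boldsymbol{\sigma}_h(\boldsymbol{w})\|_0\lesssim h(|\boldsymbol{u}|_2+\lambda|\div\boldsymbol{u}|_1)\lesssim \iota^{-1/2}h\|\boldsymbol{f}\|_0,\qquad \interleave\boldsymbol{\sigma}_h(\boldsymbol{w})\interleave_{1,h}\lesssim h(|\boldsymbol{u}|_3+\lambda|\div\boldsymbol{u}|_2)\lesssim \iota^{-3/2}h\|\boldsymbol{f}\|_0,
\]
where the last inequality on each line invokes \eqref{Regularity-u}--\eqref{Regularity-divu}. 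Substituting these two estimates into the continuity bounds from the previous paragraph and summing delivers \eqref{Ih-error4} directly, with the $\iota^2 a_h$-term contributing $\iota^{1/2}h\|\boldsymbol{f}\|_0\,\interleave\boldsymbol{\varepsilon}_h(\boldsymbol{v}_h)\interleave_{1,h}$ and the $b_h$-term contributing $\iota^{-1/2}h\|\boldsymbol{f}\|_0\,\|\boldsymbol{\varepsilon}_h(\boldsymbol{v}_h)\|_0$.

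For \eqref{Ih-error5}, the $b_h$-contribution is handled by the alternative, $\boldsymbol{u}_0$-decomposition bound $\|\boldsymbol{\sigma}_h(\boldsymbol{w})\|_0\lesssim \|\boldsymbol{\sigma}(\boldsymbol{u}-\boldsymbol{u}_0)\|_0+h^{s-1}(|\boldsymbol{u}_0|_s+\lambda|\div\boldsymbol{u}_0|_{s-1})\lesssim \iota^{1/2}\|\boldsymbol{f}\|_0+h^{s-1}\|\boldsymbol{f}\|_{s-2}$ derived inside the proof of Lemma~\ref{interpolation-I-lemma2}. For the $\iota^2 a_h$-contribution I combine the second bound $\interleave\boldsymbol{\sigma}_h(\boldsymbol{w})\interleave_{1,h}\lesssim \iota^{-3/2}h\|\boldsymbol{f}\|_0$ from the previous step with the standard inverse estimate $\interleave\boldsymbol{\varepsilon}_h(\boldsymbol{v}_h)\interleave_{1,h}\lesssim h^{-1}\|\boldsymbol{\varepsilon}_h(\boldsymbol{v}_h)\|_0$, which is valid since $\boldsymbol{\varepsilon}_h(\boldsymbol{v}_h)|_T$ is a polynomial of degree at most $3d$; the factors $h$ and $h^{-1}$ cancel, leaving $\iota^2 a_h(\boldsymbol{w},\boldsymbol{v}_h)\lesssim \iota^{1/2}\|\boldsymbol{f}\|_0\,\|\boldsymbol{\varepsilon}_h(\boldsymbol{v}_h)\|_0$. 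Adding the two contributions gives \eqref{Ih-error5}. The principal technical obstacle is establishing the $\interleave\cdot\interleave_{1,h}$-continuity of $a_h$ with a constant independent of both $\iota$ and $\lambda$; this requires careful bookkeeping of the three Nitsche boundary terms via discrete trace and inverse estimates applied separately to the $\boldsymbol{\varepsilon}_h$- and $\div$-contributions hidden inside $\boldsymbol{\sigma}_h$. The remaining steps are essentially algebraic combinations of interpolation and regularity estimates already at our disposal.
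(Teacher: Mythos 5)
Your overall strategy coincides with the paper's: split into $a_h$ and $b_h$, prove two continuity-type bounds for each (one against $\interleave\boldsymbol{\varepsilon}_h(\boldsymbol{v}_h)\interleave_{1,h}$ using full $H^3$-regularity, one against $\|\boldsymbol{\varepsilon}_h(\boldsymbol{v}_h)\|_0$ using inverse estimates), and use the $\boldsymbol{u}_0$-splitting bound \eqref{interpolation-I-lemma2-pf} for the $b_h$-part of \eqref{Ih-error5}. However, your key intermediate inequality $a_h(\boldsymbol{w},\boldsymbol{v}_h)\lesssim \interleave\boldsymbol{\sigma}_h(\boldsymbol{w})\interleave_{1,h}\,\interleave\boldsymbol{\varepsilon}_h(\boldsymbol{v}_h)\interleave_{1,h}$ is not valid as stated. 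The Nitsche term $\sum_{F\in\mathcal F_h^{\partial}}(\partial_{n}\boldsymbol{\sigma}_h(\boldsymbol{w}),\boldsymbol{\varepsilon}_h(\boldsymbol{v}_h))_F$ requires control of $h_F^{1/2}\|\partial_n\boldsymbol{\sigma}_h(\boldsymbol{w})\|_{0,F}$, which the norm $\interleave\boldsymbol{\sigma}_h(\boldsymbol{w})\interleave_{1,h}$ (only $|\cdot|_{1,h}$ plus boundary $L^2$-traces) does not provide; and the ``discrete trace/inverse estimates'' you invoke apply only to piecewise polynomials, whereas $\boldsymbol{\sigma}_h(\boldsymbol{w})=\boldsymbol{\sigma}_h(\boldsymbol{u}-I_h^V\boldsymbol{u})$ is not piecewise polynomial. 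The continuous trace inequality would bring in $h_T|\boldsymbol{\sigma}_h(\boldsymbol{w})|_{2,T}$, which is of the right size but lies outside your claimed bound. The paper avoids this by keeping the term $\sum_{F\in\mathcal F_h^{\partial}}\|\partial_{n}\boldsymbol{\sigma}_h(\boldsymbol{w})\|_{0,F}\|\boldsymbol{\varepsilon}_h(\boldsymbol{v}_h)\|_{0,F}$ separate and estimating it directly with the $j=1$ cases of \eqref{Ih-error11} and \eqref{Jh-error11}; you cite these estimates but only feed them into $\interleave\boldsymbol{\sigma}_h(\boldsymbol{w})\interleave_{1,h}$, which never sees the normal-derivative traces. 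The fix is routine, but as written the step is a genuine gap.

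A secondary issue concerns the $\iota^2a_h$-contribution in \eqref{Ih-error5}: you cancel a global $h$ (from $\interleave\boldsymbol{\sigma}_h(\boldsymbol{w})\interleave_{1,h}\lesssim \iota^{-3/2}h\|\boldsymbol{f}\|_0$, $h=\max_T h_T$) against a global $h^{-1}$ from the inverse estimate $\interleave\boldsymbol{\varepsilon}_h(\boldsymbol{v}_h)\interleave_{1,h}\lesssim h^{-1}\|\boldsymbol{\varepsilon}_h(\boldsymbol{v}_h)\|_0$, which actually produces $(\min_T h_T)^{-1}$; the cancellation therefore needs quasi-uniformity of $\mathcal{T}_h$. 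The paper performs the cancellation locally and element-by-element, bounding $h_T^{-1}\|\nabla\boldsymbol{\sigma}_h(\boldsymbol{w})\|_{0,T}\lesssim|\boldsymbol{\sigma}(\boldsymbol{u})|_{2,\omega_T}$ and weighting the face terms by $h_F^{2j-3}$, so only shape regularity is used. With the Nitsche term carried separately (as above) and the pairing done locally, your argument reproduces the paper's proof; the remaining regularity bookkeeping in your two displays is correct.
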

\begin{proof}
Employing the Cauchy-Schwarz inequality, the inverse inequality, (\ref{Ih-error1})-(\ref{Ih-error11}), (\ref{Jh-error1})-(\ref{Jh-error11}), and \eqref{interpolation-I-lemma2-pf},
\begin{align*}
a_h(\boldsymbol{u}-I_h^V\boldsymbol{u},\boldsymbol{v}_h)&\lesssim \mathopen{\interleave}\boldsymbol{\sigma}_h(\boldsymbol{u}-I_h^V\boldsymbol{u})\mathclose{\interleave}_{1,h}\mathopen{\interleave}\boldsymbol{\varepsilon}_h(\boldsymbol{v}_h)\mathclose{\interleave}_{1,h} \\
&\quad + \sum_{F\in\mathcal F_h^{\partial}}\|\partial_{n}(\boldsymbol{\sigma}_h(\boldsymbol{u}-I_h^V\boldsymbol{u}))\|_{0,F}\|\boldsymbol{\varepsilon}_h(\boldsymbol{v}_h)\|_{0,F} \\
&\lesssim h(|\boldsymbol{u}|_3+\lambda|\div\boldsymbol{u}|_2)\mathopen{\interleave}\boldsymbol{\varepsilon}_h(\boldsymbol{v}_h)\mathclose{\interleave}_{1,h}, \\
a_h(\boldsymbol{u}-I_h^V\boldsymbol{u},\boldsymbol{v}_h)&\lesssim \|\boldsymbol{\varepsilon}_h(\boldsymbol{v}_h)\|_{0}\Bigg(\sum_{T\in\mathcal{T}_h}h_T^{-2}\|\nabla\boldsymbol{\sigma}_h(\boldsymbol{u}-I_h^V\boldsymbol{u})\|_{0,T}^2\Bigg)^{1/2} \\
&\quad + \|\boldsymbol{\varepsilon}_h(\boldsymbol{v}_h)\|_{0}\Bigg(\sum_{F\in\mathcal{F}_h^{\partial}}\sum_{j=0}^1h_F^{2j-3}\|\partial_n^j\boldsymbol{\sigma}_h(\boldsymbol{u}-I_h^V\boldsymbol{u})\|_{0,F}^2\Bigg)^{1/2} \\
&\lesssim (|\boldsymbol{u}|_3+\lambda|\div\boldsymbol{u}|_2)\|\boldsymbol{\varepsilon}_h(\boldsymbol{v}_h)\|_{0},
\end{align*}
\begin{equation*}
b_h(\boldsymbol{u}-I_h^V\boldsymbol{u},\boldsymbol{v}_h)\leq\|\boldsymbol{\sigma}_h(\boldsymbol{u}-I_h^V\boldsymbol{u})\|_0\|\boldsymbol{\varepsilon}_h(\boldsymbol{v}_h)\|_0 \lesssim h(|\boldsymbol{u}|_2+\lambda|\div\boldsymbol{u}|_1)\|\boldsymbol{\varepsilon}_h(\boldsymbol{v}_h)\|_0,
\end{equation*}
and
\begin{align*}
& b_h(\boldsymbol{u}-I_h^V\boldsymbol{u},\boldsymbol{v}_h) \\
&\quad \lesssim (|\boldsymbol{u}-\boldsymbol{u}_0|_1 + \lambda\|\div(\boldsymbol{u}-\boldsymbol{u}_0)\|_0+h^{s-1}(|\boldsymbol{u}_0|_s+\lambda |\div\boldsymbol{u}_0|_{s-1}))\|\boldsymbol{\varepsilon}_h(\boldsymbol{v}_h)\|_0.
\end{align*}
Then estimates (\ref{Ih-error4})-(\ref{Ih-error5}) follow from (\ref{elasregularity})-(\ref{Regularity-divu}).
\end{proof}

\begin{lemma}
\label{consistency-lemma1}
Let $\boldsymbol{u}\in H_0^2(\Omega;\mathbb{R}^d)\cap H^3(\Omega;\mathbb{R}^d)$ be the solution of problem \eqref{SGE0}. We have for any $\boldsymbol{v}_h\in V_h$ that
\begin{align}
\label{consistency-error30}
a_h(\boldsymbol{u},\boldsymbol{v}_h)+(\Delta\boldsymbol{\sigma}(\boldsymbol{u}),\boldsymbol{\varepsilon}(E_h\boldsymbol{v}_h))&\lesssim \iota^{-3/2}h\|\boldsymbol{f}\|_0|\boldsymbol{v}_h|_{2,h}, \\
\label{consistency-error3}
a_h(\boldsymbol{u},\boldsymbol{v}_h)+(\Delta\boldsymbol{\sigma}(\boldsymbol{u}),\boldsymbol{\varepsilon}(E_h\boldsymbol{v}_h))&\lesssim \iota^{-3/2}\|\boldsymbol{f}\|_0|\boldsymbol{v}_h|_{1,h}.
\end{align}
\end{lemma}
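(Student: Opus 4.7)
The plan is to integrate by parts in $a_h(\boldsymbol u,\boldsymbol v_h)$ and use the homogeneous clamped boundary condition $\boldsymbol u=\partial_n\boldsymbol u=0$ on $\partial\Omega$ to reduce the expression to a bulk residual plus interior-face jump terms. Since $\nabla\boldsymbol u|_{\partial\Omega}=0$, the Cauchy stress $\boldsymbol\sigma(\boldsymbol u)$ vanishes on $\partial\Omega$, so the two Nitsche terms in $a_h$ that carry an unweighted $\boldsymbol\sigma(\boldsymbol u)$ drop out. Elementwise integration by parts of $(\nabla_h\boldsymbol\sigma(\boldsymbol u),\nabla_h\boldsymbol\varepsilon_h(\boldsymbol v_h))$ produces $-(\Delta\boldsymbol\sigma(\boldsymbol u),\boldsymbol\varepsilon_h(\boldsymbol v_h))$ together with boundary contributions; since $\partial_n\boldsymbol\sigma(\boldsymbol u)$ is continuous across interelement faces (from $\boldsymbol u\in H^3$), the interior contributions assemble into jumps of $\boldsymbol\varepsilon_h(\boldsymbol v_h)$, while the $\partial\Omega$ contribution cancels the surviving third Nitsche term. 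Adding $(\Delta\boldsymbol\sigma(\boldsymbol u),\boldsymbol\varepsilon(E_h\boldsymbol v_h))$ then yields the clean decomposition
\[
a_h(\boldsymbol u,\boldsymbol v_h)+(\Delta\boldsymbol\sigma(\boldsymbol u),\boldsymbol\varepsilon(E_h\boldsymbol v_h)) = (\Delta\boldsymbol\sigma(\boldsymbol u),\boldsymbol\varepsilon_h(E_h\boldsymbol v_h-\boldsymbol v_h)) + \sum_{F\in\mathring{\mathcal F}_h}(\partial_n\boldsymbol\sigma(\boldsymbol u),[\![\boldsymbol\varepsilon_h(\boldsymbol v_h)]\!])_F,
\]
whose two pieces I will call $\mathrm I$ and $\mathrm{II}$.

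The regularity assumptions \eqref{Regularity-u}--\eqref{Regularity-divu} supply the common prefactor $|\boldsymbol\sigma(\boldsymbol u)|_2\lesssim|\boldsymbol u|_3+\lambda|\div\boldsymbol u|_2\lesssim\iota^{-3/2}\|\boldsymbol f\|_0$. For $\mathrm I$, Cauchy--Schwarz gives $|\mathrm I|\leq\|\Delta\boldsymbol\sigma(\boldsymbol u)\|_0\,|E_h\boldsymbol v_h-\boldsymbol v_h|_{1,h}$, and I would invoke the connection bound \eqref{connection-Vh1} in two regimes: with $s=m=1$ it yields $|E_h\boldsymbol v_h-\boldsymbol v_h|_{1,h}\lesssim|\boldsymbol v_h|_{1,h}\lesssim\|\boldsymbol\varepsilon_h(\boldsymbol v_h)\|_0$ by the discrete Korn inequality \eqref{discreteKorn}, delivering \eqref{consistency-error3}; with $s=2$, $m=1$ it yields $|E_h\boldsymbol v_h-\boldsymbol v_h|_{1,h}\lesssim h|\boldsymbol v_h|_{2,h}\lesssim h\interleave\boldsymbol\varepsilon_h(\boldsymbol v_h)\interleave_{1,h}$ by the discrete $H^2$-Korn inequality \eqref{discreteKornH2}, producing the extra $h$ required for \eqref{consistency-error30}.

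For $\mathrm{II}$ the critical ingredient is the weak continuity \eqref{weak-continuity-vh1}, which forces $\int_F[\![\boldsymbol\varepsilon_h(\boldsymbol v_h)]\!]\ds=0$ on every interior face. I therefore replace $\partial_n\boldsymbol\sigma(\boldsymbol u)$ by $\partial_n\boldsymbol\sigma(\boldsymbol u)-Q_{0,F}\partial_n\boldsymbol\sigma(\boldsymbol u)$ at no cost, apply Cauchy--Schwarz, and use the trace--Poincar\'e estimate $\|\partial_n\boldsymbol\sigma(\boldsymbol u)-Q_{0,F}\partial_n\boldsymbol\sigma(\boldsymbol u)\|_{0,F}^2\lesssim h_T|\boldsymbol\sigma(\boldsymbol u)|_{2,T}^2$ on the adjacent element to gain the power $h$; combining the interior jump sum with the equivalence \eqref{eq:brokenH1seminormequiv} bounds the remaining factor by $\interleave\boldsymbol\varepsilon_h(\boldsymbol v_h)\interleave_{1,h}$. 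This yields $|\mathrm{II}|\lesssim h|\boldsymbol\sigma(\boldsymbol u)|_2\interleave\boldsymbol\varepsilon_h(\boldsymbol v_h)\interleave_{1,h}$, matching \eqref{consistency-error30}. For \eqref{consistency-error3} I additionally absorb the prefactor $h$ via the inverse inequality $\interleave\boldsymbol\varepsilon_h(\boldsymbol v_h)\interleave_{1,h}\lesssim h^{-1}\|\boldsymbol\varepsilon_h(\boldsymbol v_h)\|_0$, which follows from $|\boldsymbol v_h|_{2,h}\lesssim h^{-1}|\boldsymbol v_h|_{1,h}$ on polynomial spaces and the discrete Korn inequality.

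The hardest step is the handling of $\mathrm{II}$: a direct trace estimate on $\partial_n\boldsymbol\sigma(\boldsymbol u)$ would yield only $|\boldsymbol\sigma(\boldsymbol u)|_1\lesssim\iota^{-1/2}\|\boldsymbol f\|_0$ and introduce a spurious $\iota^{-1/2}h^{-1}$ prefactor that destroys the desired $\iota^{-3/2}$-scaling. Only by exploiting the weak continuity of $V_h$ to subtract the face projection $Q_{0,F}\partial_n\boldsymbol\sigma(\boldsymbol u)$ does one simultaneously gain the power $h$ and promote the regularity factor to $|\boldsymbol\sigma(\boldsymbol u)|_2$, producing the robust $\iota^{-3/2}$-scaling with the correct $h$ exponent in both bounds.
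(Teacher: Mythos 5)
Your proposal is correct and follows essentially the same route as the paper's proof: the identical integration-by-parts decomposition into the bulk term $(\Delta\boldsymbol{\sigma}(\boldsymbol{u}),\boldsymbol{\varepsilon}_h(E_h\boldsymbol{v}_h-\boldsymbol{v}_h))$ plus the interior-face jump sum, the same use of the weak continuity \eqref{weak-continuity-vh1} to insert $Q_{0,F}\partial_{n}\boldsymbol{\sigma}(\boldsymbol{u})$ at no cost, and the same final estimates via \eqref{connection-Vh1}, the error estimate of $Q_{0,F}$, the inverse inequality, the discrete Korn inequalities \eqref{discreteKorn}--\eqref{discreteKornH2}, and the regularity \eqref{Regularity-u}--\eqref{Regularity-divu}. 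The only difference is presentational: you make explicit the cancellation of the Nitsche boundary terms via $\boldsymbol{\sigma}(\boldsymbol{u})|_{\partial\Omega}=0$, which the paper leaves implicit.
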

\begin{proof}
Integrating by parts and using the weak continuity of $ V_h$, we have 
\begin{align*}
&\quad\; a_h(\boldsymbol{u},\boldsymbol{v}_h)+(\Delta\boldsymbol{\sigma}(\boldsymbol{u}),\boldsymbol{\varepsilon}(E_h\boldsymbol{v}_h))\\
&=(\Delta\boldsymbol{\sigma}(\boldsymbol{u}),\boldsymbol{\varepsilon}_h(E_h\boldsymbol{v}_h-\boldsymbol{v}_h))+\sum_{F\in\mathring{\mathcal{F}}_h}(\partial_{n}\boldsymbol{\sigma}(\boldsymbol{u}),[\![\boldsymbol{\varepsilon}_h(\boldsymbol{v}_h)]\!])_F\\
&=(\Delta\boldsymbol{\sigma}(\boldsymbol{u}),\boldsymbol{\varepsilon}_h(E_h\boldsymbol{v}_h-\boldsymbol{v}_h))\\
&\quad+\sum_{F\in\mathring{\mathcal{F}}_h}(\partial_{n}\boldsymbol{\sigma}(\boldsymbol{u})-Q_{0,F}\partial_{n}\boldsymbol{\sigma}(\boldsymbol{u}),[\![\boldsymbol{\varepsilon}_h(\boldsymbol{v}_h)]\!]-Q_{0,F}[\![\boldsymbol{\varepsilon}_h(\boldsymbol{v}_h)]\!])_F.
\end{align*}
Applying the error estimate of $Q_{0,F}$, (\ref{connection-Vh1}) and the inverse inequality, we deduce
\begin{align*}
& a_h(\boldsymbol{u},\boldsymbol{v}_h) + (\Delta\boldsymbol{\sigma}(\boldsymbol{u}),\boldsymbol{\varepsilon}(E_h\boldsymbol{v}_h)) \lesssim h|\boldsymbol{\sigma}(\boldsymbol{u})|_2|\boldsymbol{v}_h|_{2,h}, \\
& a_h(\boldsymbol{u},\boldsymbol{v}_h) + (\Delta\boldsymbol{\sigma}(\boldsymbol{u}),\boldsymbol{\varepsilon}(E_h\boldsymbol{v}_h)) \lesssim |\boldsymbol{\sigma}(\boldsymbol{u})|_2|\boldsymbol{v}_h|_{1,h}.
\end{align*}
Hence, the estimates (\ref{consistency-error30})-(\ref{consistency-error3}) follow from the regularity (\ref{Regularity-u})-(\ref{Regularity-divu}).
\end{proof}

\begin{lemma}
\label{consistency-lemma2}
Let $\boldsymbol{u}\in H_0^2(\Omega;\mathbb{R}^d)\cap H^3(\Omega;\mathbb{R}^d)$ be the solution of problem \eqref{SGE0}, and the solution $\boldsymbol{u}_0\in H^1_0(\Omega;\mathbb{R}^d)$ of problem \eqref{SGElinear} satisfies the regularity \eqref{elasregularity} with $2\leq s\leq 3$. We have for any $\boldsymbol{v}_h\in V_h$ that
\begin{align}
\label{main-proof-uh-20}
&(\boldsymbol{f},\boldsymbol{v}_h)-\iota^2 a_h(\boldsymbol{u},\boldsymbol{v}_h)-b_h(\boldsymbol{u},\boldsymbol{v}_h) \!\lesssim h\|\boldsymbol{f}\|_0(\|\boldsymbol{\varepsilon}_h(\boldsymbol{v}_h)\|_{0}+\iota^{1/2}\mathopen{\interleave} \boldsymbol{\varepsilon}_h(\boldsymbol{v}_h)\mathclose{\interleave}_{1,h}), \\
\label{main-proof-uh-2}
&(\boldsymbol{f},\boldsymbol{v}_h)-\iota^2 a_h(\boldsymbol{u},\boldsymbol{v}_h)-b_h(\boldsymbol{u},\boldsymbol{v}_h) \lesssim (\iota^{1/2}\|\boldsymbol{f}\|_0+h^{s-1}\|\boldsymbol{f}\|_{s-2})\|\boldsymbol{\varepsilon}_h(\boldsymbol{v}_h)\|_{0}.
\end{align}
\end{lemma}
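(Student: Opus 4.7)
The plan is to convert the consistency error into two face sums via element-wise integration by parts, and then bound each via structural identities coming from the PDE and the linear-elasticity reduction. Introduce $\boldsymbol{\tau}:=\boldsymbol{\sigma}(\boldsymbol{u})-\iota^{2}\Delta\boldsymbol{\sigma}(\boldsymbol{u})$, so that the strong form of \eqref{SGE0} reads $-\div\boldsymbol{\tau}=\boldsymbol{f}$. Element-wise Green's identity and the continuity of $\boldsymbol{\tau}\boldsymbol{n}_F$ across interior faces yield $(\boldsymbol{f},\boldsymbol{v}_h)=(\boldsymbol{\sigma}(\boldsymbol{u}),\boldsymbol{\varepsilon}_h(\boldsymbol{v}_h))-\iota^{2}(\Delta\boldsymbol{\sigma}(\boldsymbol{u}),\boldsymbol{\varepsilon}_h(\boldsymbol{v}_h))-\sum_{F\in\mathcal F_h}(\boldsymbol{\tau}\boldsymbol{n},[\![\boldsymbol{v}_h]\!])_F$. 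A second element-wise IBP on $\iota^{2}a_h(\boldsymbol{u},\boldsymbol{v}_h)$, together with the key observation that $\boldsymbol{u}|_{\partial\Omega}=\partial_n\boldsymbol{u}|_{\partial\Omega}=0$ forces $\boldsymbol{\sigma}(\boldsymbol{u})|_{\partial\Omega}=0$ and thereby cancels every Nitsche boundary contribution, gives $\iota^{2}a_h(\boldsymbol{u},\boldsymbol{v}_h)=-\iota^{2}(\Delta\boldsymbol{\sigma}(\boldsymbol{u}),\boldsymbol{\varepsilon}_h(\boldsymbol{v}_h))+\iota^{2}\sum_{F\in\mathring{\mathcal F}_h}(\partial_n\boldsymbol{\sigma}(\boldsymbol{u}),[\![\boldsymbol{\varepsilon}_h(\boldsymbol{v}_h)]\!])_F$. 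Subtracting and using $b_h(\boldsymbol{u},\boldsymbol{v}_h)=(\boldsymbol{\sigma}(\boldsymbol{u}),\boldsymbol{\varepsilon}_h(\boldsymbol{v}_h))$ produces the representation
\begin{equation*}
(\boldsymbol{f},\boldsymbol{v}_h)-\iota^{2}a_h(\boldsymbol{u},\boldsymbol{v}_h)-b_h(\boldsymbol{u},\boldsymbol{v}_h)=-\sum_{F\in\mathcal F_h}(\boldsymbol{\tau}\boldsymbol{n},[\![\boldsymbol{v}_h]\!])_F-\iota^{2}\sum_{F\in\mathring{\mathcal F}_h}(\partial_n\boldsymbol{\sigma}(\boldsymbol{u}),[\![\boldsymbol{\varepsilon}_h(\boldsymbol{v}_h)]\!])_F.
\end{equation*}

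The second sum is precisely the face integral already controlled inside the proof of Lemma~\ref{consistency-lemma1}, so multiplying \eqref{consistency-error30} and \eqref{consistency-error3} by $\iota^{2}$ covers it, producing the $\iota^{1/2}h\|\boldsymbol{f}\|_0\interleave\boldsymbol{\varepsilon}_h(\boldsymbol{v}_h)\interleave_{1,h}$ and $\iota^{1/2}\|\boldsymbol{f}\|_0\|\boldsymbol{\varepsilon}_h(\boldsymbol{v}_h)\|_0$ contributions to \eqref{main-proof-uh-20} and \eqref{main-proof-uh-2}. For the first sum I would split $\boldsymbol{\tau}=\boldsymbol{\sigma}(\boldsymbol{u}_0)+\boldsymbol{w}$ with $\boldsymbol{w}:=\boldsymbol{\tau}-\boldsymbol{\sigma}(\boldsymbol{u}_0)$, where $\boldsymbol{u}_0$ solves \eqref{SGElinear}; since $-\div\boldsymbol{\sigma}(\boldsymbol{u}_0)=\boldsymbol{f}=-\div\boldsymbol{\tau}$, $\boldsymbol{w}$ is divergence-free. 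For the $\boldsymbol{\sigma}(\boldsymbol{u}_0)\boldsymbol{n}$ piece, the weak continuity \eqref{weak-continuity-vh2} (which is valid on boundary faces as well) allows subtraction of $Q_{1,F}(\boldsymbol{\sigma}(\boldsymbol{u}_0)\boldsymbol{n})$; combining a trace--approximation estimate $\|\boldsymbol{\sigma}(\boldsymbol{u}_0)\boldsymbol{n}-Q_{1,F}(\boldsymbol{\sigma}(\boldsymbol{u}_0)\boldsymbol{n})\|_{0,F}\lesssim h_F^{s-3/2}|\boldsymbol{\sigma}(\boldsymbol{u}_0)|_{s-1,\omega_F}$, the elasticity regularity $|\boldsymbol{\sigma}(\boldsymbol{u}_0)|_{s-1}\lesssim\|\boldsymbol{f}\|_{s-2}$ from \eqref{elasregularity}, the jump bound $\sum_F h_F^{-1}\|[\![\boldsymbol{v}_h]\!]\|^2_{0,F}\lesssim|\boldsymbol{v}_h|^2_{1,h}$ (a consequence of \eqref{weak-continuity-vh2} and \eqref{eq:brokenH1seminormequiv}), and discrete Korn \eqref{discreteKorn}, gives $h^{s-1}\|\boldsymbol{f}\|_{s-2}\|\boldsymbol{\varepsilon}_h(\boldsymbol{v}_h)\|_0$; specializing to $s=2$ yields exactly the $h\|\boldsymbol{f}\|_0\|\boldsymbol{\varepsilon}_h(\boldsymbol{v}_h)\|_0$ piece of \eqref{main-proof-uh-20}.

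The crux, and the main obstacle, is the divergence-free piece $\boldsymbol{w}$: since $\boldsymbol{w}\in H(\div,\Omega)$ is symmetric with $\div\boldsymbol{w}=0$, a global IBP gives $\sum_{F\in\mathcal F_h}(\boldsymbol{w}\boldsymbol{n},[\![\boldsymbol{v}_h]\!])_F=(\boldsymbol{w},\boldsymbol{\varepsilon}_h(\boldsymbol{v}_h))$; and because $E_h\boldsymbol{v}_h\in V_h^L\subset H^1_0(\Omega;\mathbb R^d)$, another global IBP forces $(\boldsymbol{w},\boldsymbol{\varepsilon}(E_h\boldsymbol{v}_h))=-(\div\boldsymbol{w},E_h\boldsymbol{v}_h)=0$, so the sum equals $(\boldsymbol{w},\boldsymbol{\varepsilon}_h(\boldsymbol{v}_h-E_h\boldsymbol{v}_h))$. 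The regularity \eqref{Regularity-u}--\eqref{Regularity-divu} yields $\|\boldsymbol{w}\|_0\leq\|\boldsymbol{\sigma}(\boldsymbol{u}-\boldsymbol{u}_0)\|_0+\iota^{2}\|\Delta\boldsymbol{\sigma}(\boldsymbol{u})\|_0\lesssim\iota^{1/2}\|\boldsymbol{f}\|_0$. For \eqref{main-proof-uh-20} the connection estimate \eqref{connection-Vh1} at $(m,s)=(1,2)$ together with discrete $H^2$-Korn \eqref{discreteKornH2} gives $\|\boldsymbol{\varepsilon}_h(\boldsymbol{v}_h-E_h\boldsymbol{v}_h)\|_0\lesssim h|\boldsymbol{v}_h|_{2,h}\lesssim h\interleave\boldsymbol{\varepsilon}_h(\boldsymbol{v}_h)\interleave_{1,h}$, producing the needed $\iota^{1/2}h\|\boldsymbol{f}\|_0\interleave\boldsymbol{\varepsilon}_h(\boldsymbol{v}_h)\interleave_{1,h}$; for \eqref{main-proof-uh-2} the coarser $\|\boldsymbol{\varepsilon}_h(\boldsymbol{v}_h-E_h\boldsymbol{v}_h)\|_0\lesssim\|\boldsymbol{\varepsilon}_h(\boldsymbol{v}_h)\|_0$ suffices. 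Without the $E_h\boldsymbol{v}_h$ insertion a direct Cauchy--Schwarz on $(\boldsymbol{w},\boldsymbol{\varepsilon}_h(\boldsymbol{v}_h))$ yields only $\iota^{1/2}\|\boldsymbol{f}\|_0\|\boldsymbol{\varepsilon}_h(\boldsymbol{v}_h)\|_0$, which is enough for \eqref{main-proof-uh-2} but lacks the extra $h$-factor required by \eqref{main-proof-uh-20}; this orthogonality trick is therefore the essential step.
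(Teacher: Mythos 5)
Your argument is correct, and it is essentially the paper's proof in a different bookkeeping: the paper splits the test function $\boldsymbol{v}_h=E_h\boldsymbol{v}_h+(\boldsymbol{v}_h-E_h\boldsymbol{v}_h)$, tests \eqref{SGE0} with the conforming part and \eqref{SGElinear} with the remainder, and invokes Lemma~\ref{consistency-lemma1}; you instead split the flux, writing $\boldsymbol{\tau}=\boldsymbol{\sigma}(\boldsymbol{u})-\iota^2\Delta\boldsymbol{\sigma}(\boldsymbol{u})=\boldsymbol{\sigma}(\boldsymbol{u}_0)+\boldsymbol{w}$ with $\div\boldsymbol{w}=0$ and using $E_h$ only to exploit $(\boldsymbol{w},\boldsymbol{\varepsilon}(E_h\boldsymbol{v}_h))=0$. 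The resulting terms are in bijection with the paper's ($\boldsymbol{w}$-term $\leftrightarrow$ $(\boldsymbol{\sigma}(\boldsymbol{u}_0-\boldsymbol{u}),\boldsymbol{\varepsilon}_h(W_h))$ plus the $\iota^2(\Delta\boldsymbol{\sigma}(\boldsymbol{u}),\boldsymbol{\varepsilon}_h(E_h\boldsymbol{v}_h-\boldsymbol{v}_h))$ piece; your $\boldsymbol{\sigma}(\boldsymbol{u}_0)\boldsymbol{n}$ and $\partial_n\boldsymbol{\sigma}(\boldsymbol{u})$ face sums are exactly the paper's), and the tools are the same: weak continuity \eqref{weak-continuity-vh1}--\eqref{weak-continuity-vh2}, \eqref{connection-Vh1}, the regularities \eqref{elasregularity} and \eqref{Regularity-u}--\eqref{Regularity-divu}, and the Korn inequalities \eqref{discreteKorn}--\eqref{discreteKornH2}; both you and the paper implicitly use the $s=2$ instance of \eqref{elasregularity} for \eqref{main-proof-uh-20}. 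Two small loose ends you should tidy. First, your intermediate identity treats $\boldsymbol{\tau}\boldsymbol{n}|_F$ (equivalently $\boldsymbol{w}\boldsymbol{n}|_F$) as an $L^2(F)$ function, but with $\boldsymbol{u}\in H^3$ you only have $\Delta\boldsymbol{\sigma}(\boldsymbol{u})\in L^2$, so $\boldsymbol{w}\in H(\div,\Omega)$ and its normal trace lives only in $H^{-1/2}$; since you immediately convert that face sum back into $(\boldsymbol{w},\boldsymbol{\varepsilon}_h(\boldsymbol{v}_h-E_h\boldsymbol{v}_h))$, the cleanest fix is to never generate it: use $(\boldsymbol{f},\boldsymbol{v}_h)=-(\div\boldsymbol{\sigma}(\boldsymbol{u}_0),\boldsymbol{v}_h)$ and integrate by parts only the $\boldsymbol{\sigma}(\boldsymbol{u}_0)$-flux (whose trace is in $L^2(F)$), keeping the $\boldsymbol{w}$ contribution as a volume term throughout — which is precisely the paper's arrangement. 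Second, citing \eqref{consistency-error30}--\eqref{consistency-error3} "times $\iota^2$" does not bound your interior face sum $\iota^2\sum_{F}(\partial_n\boldsymbol{\sigma}(\boldsymbol{u}),[\![\boldsymbol{\varepsilon}_h(\boldsymbol{v}_h)]\!])_F$ verbatim: those estimates control $a_h(\boldsymbol{u},\boldsymbol{v}_h)+(\Delta\boldsymbol{\sigma}(\boldsymbol{u}),\boldsymbol{\varepsilon}(E_h\boldsymbol{v}_h))$, which differs from the face sum by the volume term $(\Delta\boldsymbol{\sigma}(\boldsymbol{u}),\boldsymbol{\varepsilon}_h(E_h\boldsymbol{v}_h-\boldsymbol{v}_h))$; you must either bound this extra term (routine, via $\iota^2\|\Delta\boldsymbol{\sigma}(\boldsymbol{u})\|_0\lesssim\iota^{1/2}\|\boldsymbol{f}\|_0$ and \eqref{connection-Vh1}) or rerun the one-line $Q_{0,F}$-subtraction bound from inside the proof of Lemma~\ref{consistency-lemma1}. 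Neither point is a genuine gap; with these repairs your proof is complete and delivers both \eqref{main-proof-uh-20} and \eqref{main-proof-uh-2}.
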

\begin{proof}
Let $\boldsymbol{w}_h=\boldsymbol{v}_h-E_h\boldsymbol{v}_h$ for simplicity. 
Apply integration by parts to problem (\ref{SGElinear}) to get
\begin{equation}\label{eq:202408081}
(\boldsymbol{f}, \boldsymbol{w}_h)-b_h(\boldsymbol{u},\boldsymbol{w}_h)=(\boldsymbol{\sigma}(\boldsymbol{u}_0-\boldsymbol{u}),\boldsymbol{\varepsilon}_h(\boldsymbol{w}_h))-\sum_{F\in\mathcal{F}_h}(\boldsymbol{\sigma}(\boldsymbol{u}_0)\boldsymbol{n},[\![\boldsymbol{v}_h]\!])_F.
\end{equation}
Adopting (\ref{connection-Vh1}), it holds
\begin{equation}\label{eq:202408082}
(\boldsymbol{\sigma}(\boldsymbol{u}_0-\boldsymbol{u}),\boldsymbol{\varepsilon}_h(\boldsymbol{w}_h))
\lesssim \|\boldsymbol{\sigma}(\boldsymbol{u}_0-\boldsymbol{u})\|_0\min\{|\boldsymbol{v}_h|_{1,h}, h|\boldsymbol{v}_h|_{2,h}\}.
\end{equation}
Thanks to the weak continuity \eqref{weak-continuity-vh2} of $V_{h}$, we obtain
\[-\sum_{F\in\mathcal{F}_h}(\boldsymbol{\sigma}(\boldsymbol{u}_0)\boldsymbol{n},[\![\boldsymbol{v}_h]\!])_F=-\sum_{F\in\mathcal{F}_h}(\boldsymbol{\sigma}(\boldsymbol{u}_0)\boldsymbol{n}-Q_{1,F}\boldsymbol{\sigma}(\boldsymbol{u}_0)\boldsymbol{n},[\![\boldsymbol{v}_h]\!]-Q_{1,F}[\![\boldsymbol{v}_h]\!])_F.\]
From the error estimate of $Q_{1,F}$, the trace inequality and (\ref{elasregularity}), we acquire
\[
-\sum_{F\in\mathcal{F}_h}(\boldsymbol{\sigma}(\boldsymbol{u}_0)\boldsymbol{n},[\![\boldsymbol{v}_h]\!])_F\lesssim h^{s-1}\|\boldsymbol{f}\|_{s-2}|\boldsymbol{v}_h|_{1,h}.
\]
Then it holds from \eqref{eq:202408081}-\eqref{eq:202408082} and the regularity \eqref{Regularity-u}-\eqref{Regularity-divu} that
\begin{align}
\label{consistency-error4}
(\boldsymbol{f},\boldsymbol{w}_h)-b_h(\boldsymbol{u},\boldsymbol{w}_h)&\lesssim \iota^{1/2}\|\boldsymbol{f}\|_0\min\{|\boldsymbol{v}_h|_{1,h}, h|\boldsymbol{v}_h|_{2,h}\} \\
&\quad+h^{s-1}\|\boldsymbol{f}\|_{s-2}|\boldsymbol{v}_h|_{1,h}.\notag
\end{align}
On the other hand, apply integration by parts to problem \eqref{SGE0} to get
\begin{align*}
(\boldsymbol{f},E_h\boldsymbol{v}_h)=-\iota^2(\Delta\boldsymbol{\sigma}(\boldsymbol{u}),\boldsymbol{\varepsilon}(E_h\boldsymbol{v}_h))+(\boldsymbol{\sigma}(\boldsymbol{u}),\boldsymbol{\varepsilon}(E_h\boldsymbol{v}_h)).
\end{align*}
So we have
\begin{align*}
(\boldsymbol{f},\boldsymbol{v}_h)-\iota^2a_h(\boldsymbol{u},\boldsymbol{v}_h)-b_h(\boldsymbol{u},\boldsymbol{v}_h)
&=(\boldsymbol{f},\boldsymbol{w}_h)-b_h(\boldsymbol{u},\boldsymbol{w}_h) \\
&\quad\;-\iota^2a_h(\boldsymbol{u},\boldsymbol{v}_h)-\iota^2(\Delta\boldsymbol{\sigma}(\boldsymbol{u}),\boldsymbol{\varepsilon}(E_h\boldsymbol{v}_h)).
\end{align*}
Combining (\ref{consistency-error30})-(\ref{consistency-error3}) and (\ref{consistency-error4}) gives
\begin{align*}
(\boldsymbol{f},\boldsymbol{v}_h)-\iota^2a_h(\boldsymbol{u},\boldsymbol{v}_h)-b_h(\boldsymbol{u},\boldsymbol{v}_h)&\lesssim \iota^{1/2}h\|\boldsymbol{f}\|_0|\boldsymbol{v}_h|_{2,h}+h\|\boldsymbol{f}\|_{0}|\boldsymbol{v}_h|_{1,h},\\
(\boldsymbol{f},\boldsymbol{v}_h)-\iota^2a_h(\boldsymbol{u},\boldsymbol{v}_h)-b_h(\boldsymbol{u},\boldsymbol{v}_h)&\lesssim (\iota^{1/2}\|\boldsymbol{f}\|_0
+h^{s-1}\|\boldsymbol{f}\|_{s-2})|\boldsymbol{v}_h|_{1,h}.
\end{align*}
The estimates \eqref{main-proof-uh-20}-\eqref{main-proof-uh-2} follow from the last two inequalities and the discrete Korn's inequalities~\eqref{discreteKorn}-\eqref{discreteKornH2}.
\end{proof}

\begin{theorem}
\label{main-theorem-uh-1}
Let $\boldsymbol{u}\in H_0^2(\Omega;\mathbb{R}^d)\cap H^3(\Omega;\mathbb{R}^d)$ be the solution of problem \eqref{SGE0}, and the solution $\boldsymbol{u}_0\in H^1_0(\Omega;\mathbb{R}^d)$ of problem \eqref{SGElinear} satisfies the regularity \eqref{elasregularity} with $2\leq s\leq 3$. Then
we have
\begin{align}
\label{main-result-uh-1}
\mathopen{\interleave}\boldsymbol{u}-\boldsymbol{u}_h\mathclose{\interleave}_{\iota,\lambda,h} &\lesssim \min\{\iota^{-1/2}h\|\boldsymbol{f}\|_0, \iota^{1/2}\|\boldsymbol{f}\|_0+h^{s-1}\|\boldsymbol{f}\|_{s-2}\}, \\
\label{main-result-uh-2}
\|\boldsymbol{u}_0-\boldsymbol{u}_{h}\|_{\iota,\lambda,h}&\lesssim \iota^{1/2}\|\boldsymbol{f}\|_0+h^{s-1}\|\boldsymbol{f}\|_{s-2}.
\end{align}
\end{theorem}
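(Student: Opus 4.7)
The plan is to follow a second Strang-type argument. Introduce $\boldsymbol{w}_h := \boldsymbol{u}_h - I_h^V\boldsymbol{u} \in V_h$. By the coercivity equivalence \eqref{eq:abhelliptic} together with the discrete equation \eqref{discrete-fem-weak}, one decomposes
\[
\interleave\boldsymbol{w}_h\interleave_{\iota,h}^2 \lesssim \bigl[(\boldsymbol{f},\boldsymbol{w}_h) - \iota^2 a_h(\boldsymbol{u},\boldsymbol{w}_h) - b_h(\boldsymbol{u},\boldsymbol{w}_h)\bigr] + \bigl[\iota^2 a_h(\boldsymbol{u}-I_h^V\boldsymbol{u},\boldsymbol{w}_h) + b_h(\boldsymbol{u}-I_h^V\boldsymbol{u},\boldsymbol{w}_h)\bigr],
\]
splitting the error into a consistency contribution (first bracket) and an approximation contribution (second bracket). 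Lemmas \ref{consistency-lemma2} and \ref{interpolation-I-lemma7} are tailored precisely to bound these two pieces.

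For the first alternative of \eqref{main-result-uh-1}, I would apply \eqref{main-proof-uh-20} and \eqref{Ih-error4}. The right-hand side of \eqref{main-proof-uh-20} carries a prefactor $h\|\boldsymbol{f}\|_0$ times $\|\boldsymbol{\varepsilon}_h(\boldsymbol{w}_h)\|_0 + \iota^{1/2}\interleave\boldsymbol{\varepsilon}_h(\boldsymbol{w}_h)\interleave_{1,h}$; rewriting $\iota^{1/2} = \iota^{-1/2}\cdot\iota$ and noting that both $\sqrt{2\mu}\|\boldsymbol{\varepsilon}_h(\boldsymbol{w}_h)\|_0$ and $\iota\sqrt{2\mu}\interleave\boldsymbol{\varepsilon}_h(\boldsymbol{w}_h)\interleave_{1,h}$ are bounded by $\interleave\boldsymbol{w}_h\interleave_{\iota,h}$ from the definition of the triple-bar norm (and using $\iota\leq 1$), this contribution is absorbed as $\iota^{-1/2}h\|\boldsymbol{f}\|_0\interleave\boldsymbol{w}_h\interleave_{\iota,h}$. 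Bound \eqref{Ih-error4} already has the prefactor $\iota^{-1/2}h\|\boldsymbol{f}\|_0$ and is handled identically. Hence $\interleave\boldsymbol{w}_h\interleave_{\iota,h}\lesssim\iota^{-1/2}h\|\boldsymbol{f}\|_0$. For the second alternative I would instead use \eqref{main-proof-uh-2} and \eqref{Ih-error5}, which factor cleanly through $\|\boldsymbol{\varepsilon}_h(\boldsymbol{w}_h)\|_0\lesssim\interleave\boldsymbol{w}_h\interleave_{\iota,h}$ and deliver $\interleave\boldsymbol{w}_h\interleave_{\iota,h}\lesssim \iota^{1/2}\|\boldsymbol{f}\|_0 + h^{s-1}\|\boldsymbol{f}\|_{s-2}$. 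The triangle inequality together with the interpolation estimates \eqref{Ih-error3} and \eqref{Ih-error2} from Lemma \ref{interpolation-I-lemma2} then yields both alternatives in \eqref{main-result-uh-1}.

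For \eqref{main-result-uh-2}, I would combine the triangle inequality with the trivial bound $\|\cdot\|_{\iota,h}\leq\interleave\cdot\interleave_{\iota,h}$ to get $\|\boldsymbol{u}_0-\boldsymbol{u}_h\|_{\iota,h}\leq\|\boldsymbol{u}_0-\boldsymbol{u}\|_{\iota,h}+\interleave\boldsymbol{u}-\boldsymbol{u}_h\interleave_{\iota,h}$. The second term is controlled by the second alternative of \eqref{main-result-uh-1}. The first term is estimated directly by expanding $\|\boldsymbol{u}_0-\boldsymbol{u}\|_{\iota,h}^2$ into its four defining pieces and pairing each $\lambda$- or $\lambda\iota^2$-weighted quantity with an elementary bound: for example, $\lambda\|\div(\boldsymbol{u}-\boldsymbol{u}_0)\|_0^2 = [\lambda\|\div(\boldsymbol{u}-\boldsymbol{u}_0)\|_0]\cdot\|\div(\boldsymbol{u}-\boldsymbol{u}_0)\|_0$, where the first factor is $\lesssim\iota^{1/2}\|\boldsymbol{f}\|_0$ by \eqref{Regularity-divu} and the second is bounded by $|\boldsymbol{u}-\boldsymbol{u}_0|_1\lesssim\iota^{1/2}\|\boldsymbol{f}\|_0$ by \eqref{Regularity-u}; analogous pairings using \eqref{elasregularity} and $\iota\leq 1$ handle the other three terms, yielding $\|\boldsymbol{u}_0-\boldsymbol{u}\|_{\iota,h}\lesssim\iota^{1/2}\|\boldsymbol{f}\|_0$.

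The main obstacle I expect is the careful bookkeeping in the first alternative: the right-hand sides of \eqref{main-proof-uh-20} and \eqref{Ih-error4} must be converted into multiples of $\interleave\boldsymbol{w}_h\interleave_{\iota,h}$ with a constant that is robust in $\lambda$, $\iota$, and $h$, which requires the $\iota^{1/2}$ and $\iota^1$ weights on the two seminorm contributions to match exactly the structure of the triple-bar norm. A similar but trickier bookkeeping is needed to verify that \eqref{Ih-error3} also delivers the $\iota^{-1/2}h\|\boldsymbol{f}\|_0$ bound: the terms $\iota h\sqrt{\lambda}|\div\boldsymbol{u}|_2$ and $h\sqrt{\lambda}|\div\boldsymbol{u}|_1$ must be handled by splitting $\lambda|\div\boldsymbol{u}|_k^2$ as $[\lambda^\alpha|\div\boldsymbol{u}|_k]\cdot[\lambda^{1-\alpha}|\div\boldsymbol{u}|_k]$ with the $\lambda$-weighted factor absorbed by \eqref{Regularity-divu} and the remaining factor absorbed by $|\div\boldsymbol{u}|_k\leq |\boldsymbol{u}|_{k+1}$ and \eqref{Regularity-u}, so that the residual $\lambda^{\pm 1/2}$ cancels.
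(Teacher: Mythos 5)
Your proposal is correct and follows essentially the same route as the paper: the same splitting of $\interleave\boldsymbol{u}_h-I_h^V\boldsymbol{u}\interleave_{\iota,h}^2$ via the coercivity \eqref{eq:abhelliptic} and the discrete equation into the consistency and approximation parts bounded by Lemmas \ref{consistency-lemma2} and \ref{interpolation-I-lemma7}, followed by the triangle inequality with \eqref{Ih-error3}--\eqref{Ih-error2} and the regularity \eqref{elasregularity}--\eqref{Regularity-divu}. Your explicit bookkeeping (absorbing the $\iota^{1/2}$- and $\iota$-weighted seminorms into $\interleave\cdot\interleave_{\iota,h}$, and the $\sqrt{\lambda}$-splitting trick for the divergence terms) simply spells out steps the paper leaves implicit.
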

\begin{proof}
Let $\boldsymbol{v}_h=\boldsymbol{u}_h-I_h^V\boldsymbol{u}$ for simplicity. By \eqref{eq:abhelliptic} and (\ref{discrete-fem-weak}), it holds
\begin{align*}
\mathopen{\interleave}\boldsymbol{u}_h-I_h^V\boldsymbol{u}\mathclose{\interleave}^2_{\iota,\lambda,h} &\lesssim\iota^2a_h(\boldsymbol{u}_h-I_h^V\boldsymbol{u},\boldsymbol{v}_h)+b_h(\boldsymbol{u}_h-I_h^V\boldsymbol{u},\boldsymbol{v}_h) \\
& = (\boldsymbol{f},\boldsymbol{v}_h)- \iota^2a_h(I_h^V\boldsymbol{u},\boldsymbol{v}_h)-b_h(I_h^V\boldsymbol{u},\boldsymbol{v}_h)\\
&=(\boldsymbol{f},\boldsymbol{v}_h)-\iota^2a_h(\boldsymbol{u},\boldsymbol{v}_h)-b_h(\boldsymbol{u},\boldsymbol{v}_h) \\
&\quad\;+\iota^2a_h(\boldsymbol{u}-I_h^V\boldsymbol{u},\boldsymbol{v}_h)+b_h(\boldsymbol{u}-I_h^V\boldsymbol{u},\boldsymbol{v}_h).
\end{align*}
By \eqref{Ih-error4} and \eqref{main-proof-uh-20}, we obtain
\begin{align*}
\mathopen{\interleave}\boldsymbol{u}_h-I_h^V\boldsymbol{u}\mathclose{\interleave}^2_{\iota,\lambda,h}
&\lesssim
\iota^{-1/2}h\|\boldsymbol f\|_0
(\|\boldsymbol\varepsilon_h(\boldsymbol v_h)\|_0
+
\iota\mathopen{\interleave}\boldsymbol\varepsilon_h(\boldsymbol v_h)\mathclose{\interleave}_{1,h})  \\
&\quad
+
h\|\boldsymbol f\|_0
(
\|\boldsymbol\varepsilon_h(\boldsymbol v_h)\|_0
+
\iota^{1/2}\mathopen{\interleave}\boldsymbol\varepsilon_h(\boldsymbol v_h)\mathclose{\interleave}_{1,h})\\
&\lesssim\iota^{-1/2}h\|\boldsymbol f\|_0(\|\boldsymbol\varepsilon_h(\boldsymbol v_h)\|_0
+
\iota\mathopen{\interleave}\boldsymbol\varepsilon_h(\boldsymbol v_h)\mathclose{\interleave}_{1,h}).
\end{align*}
On the other hand, using \eqref{Ih-error5} and \eqref{main-proof-uh-2} gives
\begin{equation*}
\mathopen{\interleave}\boldsymbol{u}_h-I_h^V\boldsymbol{u}\mathclose{\interleave}^2_{\iota,\lambda,h}
\lesssim
\bigl(
\iota^{1/2}\|\boldsymbol f\|_0
+
h^{s-1}\|\boldsymbol f\|_{s-2}
\bigr)
\|\boldsymbol\varepsilon_h(\boldsymbol v_h)\|_0 .
\end{equation*}
Combining the last two estimates yields
\begin{equation*}
\mathopen{\interleave}\boldsymbol{u}_h-I_h^V\boldsymbol{u}\mathclose{\interleave}_{\iota,\lambda,h}
\lesssim
\min\{
\iota^{-1/2}h\|\boldsymbol f\|_0,\,
\iota^{1/2}\|\boldsymbol f\|_0+h^{s-1}\|\boldsymbol f\|_{s-2}
\},
\end{equation*}
which together with (\ref{Regularity-u})-(\ref{Regularity-divu}) and (\ref{Ih-error3})-(\ref{Ih-error2})
leads to \eqref{main-result-uh-1}.

Finally, it follows from (\ref{elasregularity})-(\ref{Regularity-divu}) that
\begin{equation*}
\|\boldsymbol{u}_0-\boldsymbol{u}\|_{\iota,\lambda,h}\lesssim \iota^{1/2}\|\boldsymbol{f}\|_0,
\end{equation*}
which combined with (\ref{main-result-uh-1}) yields (\ref{main-result-uh-2}).
\end{proof}

\begin{remark}\label{remark-V-h0}\rm
All boundary conditions in the SGE model \eqref{SGE0} can be imposed strongly. Define
\begin{equation*}
V_{h0}=\{\boldsymbol{v}_{h}\in  V_{h}: \textrm{DoFs \eqref{dof4}-\eqref{dof5} on boundary vanish}\}.
\end{equation*}
We then propose the following finite element method for problem \eqref{weak1}:
find $\boldsymbol{u}_{h0}\in V_{h0}$ such that
\begin{align}
\label{discrete-fem-strong}
\iota^2(\nabla_h\boldsymbol{\sigma}_h(\boldsymbol{u}_{h0}),\nabla_h\boldsymbol{\varepsilon}_h(\boldsymbol{v}_h))+(\boldsymbol{\sigma}_h(\boldsymbol{u}_{h0}),\boldsymbol{\varepsilon}_h(\boldsymbol{v}_h))=(\boldsymbol{f},\boldsymbol{v}_h) \quad \forall~\boldsymbol{v}_h\in V_{h0}.
\end{align}
We can derive the following error estimates
\begin{align}
\notag
\|\boldsymbol{u}-\boldsymbol{u}_{h0}\|_{\iota,\lambda,h}&\lesssim \min\{\iota^{-1/2}h\|\boldsymbol{f}\|_0, h^{1/2}\|\boldsymbol{f}\|_0\} , \\
\label{main-result-u0h-1}
\|\boldsymbol{u}_0-\boldsymbol{u}_{h0}\|_{\iota,\lambda,h}&\lesssim(\iota^{1/2}+h^{1/2})\|\boldsymbol{f}\|_0.
\end{align}
That is $\|\boldsymbol{u}-\boldsymbol{u}_{h0}\|_{\iota,\lambda,h}=O(h^{1/2})$ for small $\iota$.
\end{remark}

\begin{remark}\rm
\label{rem:mixed-bc}
The discrete formulation \eqref{discrete-fem-weak} can be adapted to the mixed boundary conditions in Section~\ref{subsec:mixed-bc}. The displacement conditions on $\Gamma_{\rm cl}\cup\Gamma_{\rm ss}$ are imposed through the finite element space by prescribing the corresponding boundary DoFs. The normal derivative condition on $\Gamma_{\rm cl}$ is imposed weakly by Nitsche's technique. The double-traction condition on $\Gamma_{\rm ss}$ and the natural boundary conditions on $\Gamma_{\rm fr}$ are incorporated into the right-hand side boundary functionals. In the homogeneous case, these boundary functionals vanish. A numerical example with homogeneous mixed boundary conditions is given in Example~\ref{example3}.
\end{remark}

\section{Numerical results}\label{sec5}
In this section, we will numerically examine the performance of the finite element methods (\ref{discrete-fem-weak}) and (\ref{discrete-fem-strong}).
Let $\Omega$ be the unit square $(0, 1)^2$, $\mu = 1$ and $\eta=100$.
All the numerical tests are performed on uniform triangulations.

\begin{example}\label{example1}
\normalfont
We first test the discrete method (\ref{discrete-fem-weak}) with the exact solution
\[
\boldsymbol{u}=
\left(
\begin{matrix}
\sin^3(\pi x)\sin(2\pi y)\sin(\pi y)\\
-\sin^3(\pi y)\sin(2\pi x)\sin(\pi x)
\end{matrix}
\right),
\]
which is divergence-free without boundary layers.
The right-hand side $\boldsymbol{f}$ is computed from \eqref{SGE0}, and is independent of the Lam\'{e} constant $\lambda$. 

Numerical errors $\mathopen{\interleave}\boldsymbol{u}-\boldsymbol{u}_{h}\mathclose{\interleave}_{\iota,\lambda,h}$ for $\lambda = 1$ and $\lambda = 10^6$ are presented in Table~\ref{table1} and Table~\ref{table2}, respectively.
The mesh size $h$ decreases from $2^{-3}$ to $2^{-7}$, and the size parameter $\iota$ from $1$ to $10^{-8}$.
We observe from Table \ref{table1} and Table \ref{table2} that $\mathopen{\interleave}\boldsymbol{u}-\boldsymbol{u}_{h}\mathclose{\interleave}_{\iota,\lambda,h}\eqsim O(h)$ for $\iota = 1, 10^{-2}$, and $\mathopen{\interleave}\boldsymbol{u}-\boldsymbol{u}_{h}\mathclose{\interleave}_{\iota,\lambda,h}\eqsim O(h^2)$ for $\iota = 10^{-4}, 10^{-6}, 10^{-8}$, 
which are optimal and consistent with the theoretical result (\ref{main-result-uh-1}). 


\begin{table}
\centering
\caption{Error $\mathopen{\interleave}\boldsymbol{u}-\boldsymbol{u}_{h}\mathclose{\interleave}_{\iota,\lambda,h}$ of the discrete method (\ref{discrete-fem-weak}) for Example \ref{example1} with $\lambda = 1$.  }
\vspace{-1.0em}
\label{table1}
\begin{tabular}{cccccc}
\toprule
$\iota\backslash h$ & 1/8 & 1/16 & 1/32 & 1/64 & 1/128\\
\midrule
1 & 1.242e+01 & 6.132e+00 & 3.056e+00 & 1.533e+00 & 7.740e-01\\
rate &  & 1.02 & 1.01 & 1.00 & 0.99\\
$10^{-2}$ & 2.943e-01 & 9.317e-02 & 3.518e-02 & 1.593e-02 & 7.816e-03\\
rate &  & 1.66 & 1.41 & 1.14 & 1.03\\
$10^{-4}$ & 2.424e-01 & 6.361e-02 & 1.606e-02 & 4.030e-03 & 1.014e-03\\
rate &  & 1.93 & 1.99 & 1.99 & 1.99\\
$10^{-6}$ & 2.421e-01 & 6.347e-02 & 1.601e-02 & 4.009e-03 & 1.002e-03\\
rate &  & 1.93 & 1.99 & 2.00 & 2.00\\
$10^{-8}$ & 2.421e-01 & 6.347e-02 & 1.601e-02 & 4.009e-03 & 1.002e-03\\
rate &  & 1.93 & 1.99 & 2.00 & 2.00\\
\bottomrule
\end{tabular}
\vspace{1.0em}
\end{table}

\begin{table}
\centering
\caption{Error $\mathopen{\interleave}\boldsymbol{u}-\boldsymbol{u}_{h}\mathclose{\interleave}_{\iota,\lambda,h}$ of the discrete method (\ref{discrete-fem-weak}) for Example \ref{example1} with $\lambda = 10^6$.}
\vspace{-1.0em}
\label{table2}
\begin{tabular}{cccccc}
\toprule
$\iota\backslash h$ & 1/8 & 1/16 & 1/32 & 1/64 & 1/128\\
\midrule
1 & 1.816e+01 & 1.341e+01 & 8.336e+00 & 4.539e+00 & 2.335e+00\\
rate &  & 0.44 & 0.69 & 0.88 & 0.96\\
$10^{-2}$ & 4.005e-01 & 1.936e-01 & 9.523e-02 & 4.719e-02 & 2.354e-02\\
rate &  & 1.05 & 1.02 & 1.01 & 1.00\\
$10^{-4}$ & 3.138e-01 & 1.051e-01 & 3.030e-02 & 7.979e-03 & 2.050e-03\\
rate &  & 1.58 & 1.79 & 1.93 & 1.96\\
$10^{-6}$ & 3.133e-01 & 1.049e-01 & 3.018e-02 & 7.905e-03 & 2.002e-03\\
rate &  & 1.58 & 1.80 & 1.93 & 1.98\\
$10^{-8}$ & 3.133e-01 & 1.049e-01 & 3.018e-02 & 7.905e-03 & 2.002e-03\\
rate &  & 1.58 & 1.80 & 1.93 & 1.98\\
\bottomrule
\end{tabular}
\vspace{1.0em}
\end{table}
\end{example}

\begin{example}\label{example2}
\normalfont
Next, we investigate the convergence of the discrete schemes \eqref{discrete-fem-weak} and \eqref{discrete-fem-strong} in the presence of boundary layers.
The exact solution of the linear
elasticity problem (\ref{SGElinear}) is set to be the divergence-free function
\[
\boldsymbol{u}_0=
\left(
\begin{matrix}
(e^{\cos(2\pi x)}-e)\sin(2\pi y)e^{\cos(2\pi y)}\\
-(e^{\cos(2\pi y)}-e)\sin(2\pi x)e^{\cos(2\pi x)}
\end{matrix}
\right).
\]
We take the right-hand side $\boldsymbol{f}$ computed from (\ref{SGElinear}) as the right-hand side of problem \eqref{SGE0}. Notice that $\boldsymbol{f}$ is independent of both the size parameter $\iota$ and the Lam\'{e} constant $\lambda$. 
Although the explicit expression of the solution $\boldsymbol{u}$ for problem \eqref{SGE0} is unknown, the solution $\boldsymbol{u}$ possesses strong boundary layers when $\iota$ is very small.
Take $\iota = 10^{-6}, 10^{-8}$.

Numerical error $\|\boldsymbol{u}_0-\boldsymbol{u}_{h0}\|_{\iota,\lambda,h}$  of the discrete method (\ref{discrete-fem-strong}) for $\lambda = 1$ and $\lambda = 10^6$ is presented in Table~\ref{table34}.
From Table~\ref{table34}, $\|\boldsymbol{u}_0-\boldsymbol{u}_{h0}\|_{\iota,\lambda,h}\eqsim O(h^{0.5})$, which is in coincidence with the estimate (\ref{main-result-u0h-1}).
So the estimate (\ref{main-result-u0h-1}) is robust with respect to the size parameter $\iota$ and the Lam\'{e} constant $\lambda$, which is suboptimal but sharp.

Finally, we list the numerical error $\|\boldsymbol{u}_0-\boldsymbol{u}_{h}\|_{\iota,\lambda,h}$  of the discrete method (\ref{discrete-fem-weak}) for $\lambda = 1$ and $\lambda = 10^6$ in Table~\ref{table56}. We can see from Table~\ref{table56} that $\|\boldsymbol{u}_0-\boldsymbol{u}_{h}\|_{\iota,\lambda,h}\eqsim O(h^{2})$, which agrees with the estimate \eqref{main-result-uh-2}.
Therefore, the discrete method (\ref{discrete-fem-weak}) is not only robust with respect to the size parameter $\iota$ and the Lam\'{e} constant $\lambda$, but also optimal.

\begin{table}
\centering
\caption{Error $\|\boldsymbol{u}_0-\boldsymbol{u}_{h0}\|_{\iota,\lambda,h}$ of the discrete method (\ref{discrete-fem-strong}) for Example \ref{example2}.}
\vspace{-1.0em}
\label{table34}
\begin{tabular}{ccccccc}
\toprule
$\lambda$ & $\iota\backslash h$ & 1/8 & 1/16 & 1/32 & 1/64 & 1/128\\
\midrule
\multirow{4}{*}{$1$} & $10^{-6}$ & 4.678e+00 & 2.956e+00 & 2.027e+00 & 1.423e+00 & 1.004e+00\\
& rate &  & 0.66 & 0.54 & 0.51 & 0.50\\
& $10^{-8}$& 4.678e+00 & 2.956e+00 & 2.027e+00 & 1.423e+00 & 1.004e+00\\
& rate &  & 0.66 & 0.54 & 0.51 & 0.50\\
\midrule
\multirow{4}{*}{$10^{6}$} & $10^{-6}$ & 5.291e+00 & 3.296e+00 & 2.220e+00 & 1.546e+00 & 1.089e+00\\
& rate &  & 0.68 & 0.57 & 0.52 & 0.51\\
& $10^{-8}$ & 5.291e+00 & 3.296e+00 & 2.220e+00 & 1.546e+00 & 1.089e+00\\
& rate &  & 0.68 & 0.57 & 0.52 & 0.51\\
\bottomrule
\end{tabular}
\vspace{1.0em}
\end{table}
\begin{table}
\centering
\caption{Error $\|\boldsymbol{u}_0-\boldsymbol{u}_h\|_{\iota,\lambda,h}$ of the discrete method (\ref{discrete-fem-weak}) for Example \ref{example2}.}
\vspace{-1.0em}
\label{table56}
\begin{tabular}{ccccccc}
\toprule
$\lambda$ & $\iota\backslash h$ & 1/8 & 1/16 & 1/32 & 1/64 & 1/128\\
\midrule
\multirow{4}{*}{$1$}& $10^{-6}$ & 1.523e+00 & 4.195e-01 & 1.071e-01 & 2.690e-02 & 6.731e-03\\
& rate &  & 1.86 & 1.97 & 1.99 & 2.00\\
& $10^{-8}$ & 1.523e+00 & 4.195e-01 & 1.071e-01 & 2.689e-02 & 6.730e-03\\
& rate &  & 1.86 & 1.97 & 1.99 & 2.00\\
\midrule
\multirow{4}{*}{$10^{6}$}& $10^{-6}$ & 2.225e+00 & 6.630e-01 & 1.840e-01 & 4.816e-02 & 1.222e-02\\
& rate &  & 1.75 & 1.85 & 1.93 & 1.98\\
& $10^{-8}$ & 2.225e+00 & 6.629e-01 & 1.840e-01 & 4.815e-02 & 1.222e-02\\
& rate &  & 1.75 & 1.85 & 1.93 & 1.98\\
\bottomrule
\end{tabular}
\vspace{1.0em}
\end{table}

\end{example}

\begin{example}\label{example3}
\normalfont
Finally, we consider the problem \eqref{weak-mix-nonhomogeneous} with homogeneous mixed boundary conditions. The limiting linear elasticity problem as $\iota\to0$ is
\begin{equation}\label{homogeneous-linear}
\begin{cases}
-\div\boldsymbol\sigma(\boldsymbol u_0)=\boldsymbol f
& \mbox{in } \Omega,\\[1mm]
\boldsymbol u_0=\boldsymbol 0
& \mbox{on } \Gamma_{\rm cl}\cup\Gamma_{\rm ss},\\[1mm]
\boldsymbol\sigma(\boldsymbol u_0)\boldsymbol n=\boldsymbol 0
& \mbox{on } \Gamma_{\rm fr}.
\end{cases}
\end{equation}
Let the exact solution of the limiting problem \eqref{homogeneous-linear} be the divergence-free function
\[
\boldsymbol{u}_0=\curl\psi
:=
\left(\frac{\partial \psi}{\partial y}, -\frac{\partial \psi}{\partial x}\right)^{\intercal},
\]
where
\[
\psi
=
x^4(1-x)^4(1-4y^3+3y^4)
+
(6x^2-40x^3+90x^4-84x^5+28x^6)y^2(1-y)^2 .
\]
The right-hand side $\boldsymbol{f}$ is computed from \eqref{homogeneous-linear}, and hence is independent of $\lambda$ and $\iota$. We use this $\boldsymbol f$ in \eqref{weak-mix-nonhomogeneous} with homogeneous boundary data. The clamped boundary is $y=1$, the simply supported boundary is $x=0$ and $x=1$, and the free boundary is $y=0$.

The function $\boldsymbol{u}_0$ vanishes on the clamped and simply supported sides, but is generally nonzero on the free side. Moreover, $\boldsymbol{\sigma}(\boldsymbol{u}_0)\boldsymbol n=\boldsymbol 0$ on the free side. Since $\boldsymbol u_0$ does not satisfy the condition $\partial_n\boldsymbol u_0=\boldsymbol 0$ on the clamped side, boundary layers appear there for small $\iota$. On the simply supported and free sides, the higher-order boundary conditions are natural and compatible with the limiting problem.

In the computation, the displacement condition is imposed strongly, the normal derivative condition on the clamped side is imposed weakly by Nitsche's technique, and the remaining higher-order boundary conditions are treated naturally.

\begin{table}[htbp]
\centering
\caption{Error $\|\boldsymbol{u}_0-\boldsymbol{u}_h\|_{\iota,\lambda,h}$ for Example \ref{example3}.}
\vspace{-1.0em}
\label{table-mix}
\begin{tabular}{ccccccc}
\toprule
$\lambda$ & $\iota\backslash h$ & 1/8 & 1/16 & 1/32 & 1/64 & 1/128\\
\midrule
\multirow{4}{*}{$1$}& $10^{-6}$ & 2.150e-02 & 5.790e-03 & 1.472e-03 & 3.692e-04 & 9.244e-05\\
& rate &  & 1.89 & 1.98 & 2.00 & 2.00\\
& $10^{-8}$ & 2.150e-02 & 5.790e-03 & 1.472e-03 & 3.692e-04 & 9.234e-05\\
& rate &  & 1.89 & 1.98 & 2.00 & 2.00\\
\midrule
\multirow{4}{*}{$10^{6}$}& $10^{-6}$ & 2.954e-02 & 9.141e-03 & 2.525e-03 & 6.544e-04 & 1.654e-04\\
& rate &  & 1.69 & 1.86 & 1.95 & 1.98\\
& $10^{-8}$ & 2.954e-02 & 9.141e-03 & 2.525e-03 & 6.544e-04 & 1.654e-04\\
& rate &  & 1.69 & 1.86 & 1.95 & 1.98\\
\bottomrule
\end{tabular}
\vspace{1.0em}
\end{table}

The numerical results are reported in Table~\ref{table-mix}. For both $\lambda=1$ and $\lambda=10^6$, and for $\iota=10^{-6}$ and $\iota=10^{-8}$, the error $\|\boldsymbol{u}_0-\boldsymbol{u}_h\|_{\iota,\lambda,h}$ converges at the rate $O(h^2)$. Thus the proposed method works well for this mixed boundary case.

Figure~\ref{fig:H2} plots the pointwise Frobenius norm of $\nabla_h\boldsymbol{\varepsilon}_h(\boldsymbol{u}_h)$ for $h=1/128$ and $\lambda=1$. As $\iota$ decreases, a pronounced boundary layer can be observed near the clamped boundary $(y=1)$. In contrast, the simply supported and free boundaries do not exhibit such a pronounced layer structure.

\begin{figure}[htbp]
\centering
\captionsetup[subfigure]{skip=2pt}
\setlength{\tabcolsep}{2pt}

\begin{tabular}{ccc}

\begin{subfigure}{0.32\textwidth}
	\centering
	\includegraphics[width=\linewidth]{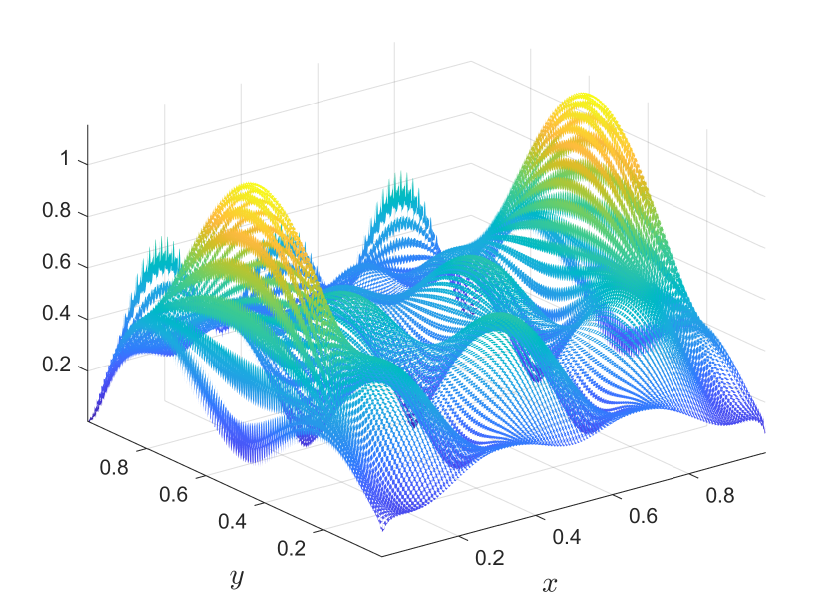}
	\caption{$\iota=10^{-1}$}
\end{subfigure}
\hfill 
\begin{subfigure}{0.32\textwidth}
	\centering
	\includegraphics[width=\linewidth]{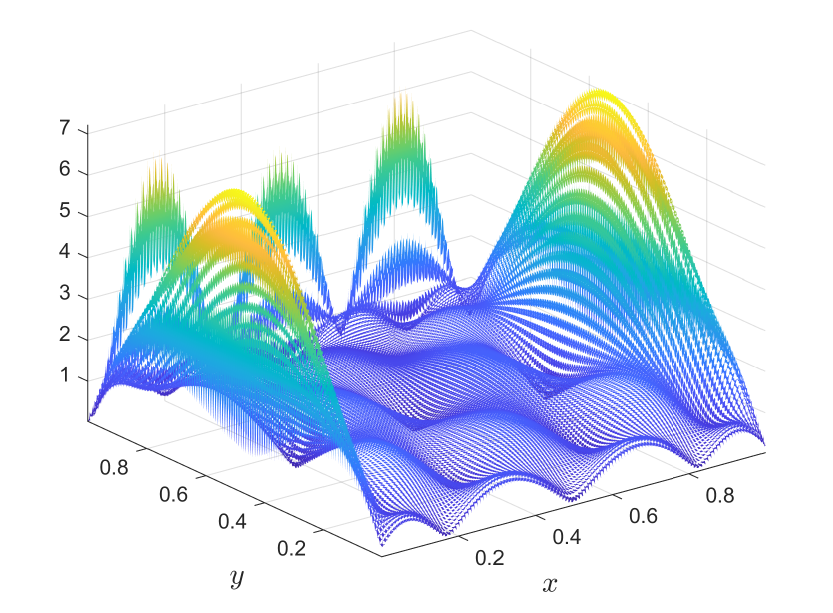}
	\caption{$\iota=10^{-2}$}
\end{subfigure}
\hfill 
\begin{subfigure}{0.32\textwidth}
	\centering
	\includegraphics[width=\linewidth]{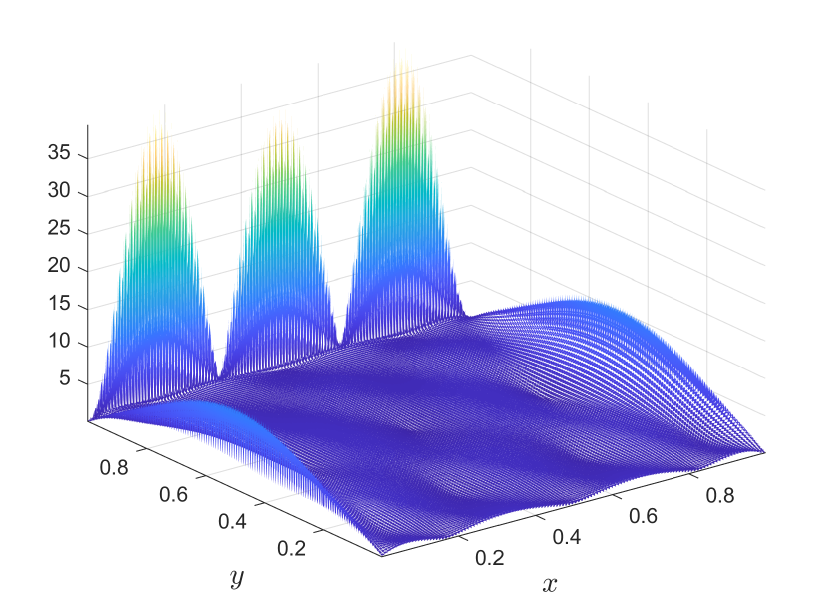}
	\caption{$\iota=10^{-3}$}
\end{subfigure}

\end{tabular}
\caption{The pointwise Frobenius norm of $\nabla_h\boldsymbol\varepsilon_h(\boldsymbol{u}_h)$ for Example~\ref{example3} with $h=1/128$ and $\lambda=1$ for several values of $\iota$.}
\label{fig:H2}
\end{figure}
\end{example}

\appendix
\section{Nonconforming finite element complexes}\label{app:complexes}
Let $\Omega$ be a contractible bounded polytope. We delve into a nonconforming finite element discretization of the smooth Stokes complex in two dimensions
\begin{equation}
\label{smoothStokescomplex2d}
0\xrightarrow{\subset} W\xrightarrow{\curl}  V
\xrightarrow{\div} Q \xrightarrow {}0
\end{equation}
with $W=H^3(\Omega)\cap H_0^2(\Omega)$, $V=H^2(\Omega;\mathbb{R}^2)\cap H_0^1(\Omega;\mathbb{R}^2)$ and $Q=H^1(\Omega)\cap L_0^2(\Omega)$,
as well as its three-dimensional counterpart,
\begin{equation}
\label{smoothStokescomplex3d}
0\xrightarrow{\subset} H_0^1(\Omega)\xrightarrow{\nabla}  W\xrightarrow{\curl}  V
\xrightarrow{\div} Q \xrightarrow {}0
\end{equation}
with $V=H^2(\Omega;\mathbb{R}^3)\cap H_0^1(\Omega;\mathbb{R}^3)$, $Q=H^1(\Omega)\cap L_0^2(\Omega)$, and
\begin{equation*}
W=\{\boldsymbol{v}\in H_0(\curl,\Omega): \curl\boldsymbol{v}\in V\}.
\end{equation*}
The parts of \eqref{smoothStokescomplex2d}--\eqref{smoothStokescomplex3d} before the last divergence step remain exact in the usual sense. More precisely, in two dimensions,
\begin{equation*}
\curl W=V\cap\ker(\div),
\end{equation*}
and in three dimensions,
\begin{equation*}
\nabla H_0^1(\Omega)=W\cap\ker(\curl),\qquad \curl W=V\cap\ker(\div).
\end{equation*}
The failure of exactness refers to the last step only: in general $\div V\neq Q$; for the two-dimensional polygonal case, see \cite[Theorem~3.1]{ArnoldScottVogelius1988}. In contrast, the finite element complexes constructed below are exact.
The spaces $V$ and $Q$ are discretized by the nonconforming spaces $V_h$ and $Q_h$ introduced in Section \ref{sec3}.
Therefore, we focus on the construction of the remaining discrete spaces in
\eqref{smoothStokescomplex2d}--\eqref{smoothStokescomplex3d}.

\subsection{Nonconforming finite element complex in two dimensions}
We first construct an $H^3$-nonconforming finite element to discretize $W$ in the complex \eqref{smoothStokescomplex2d}.
Take the space of shape
functions 
\begin{align*}
W(T):=\mathbb{P}_{3}(T)
\oplus\Oplus_{i=0}^{2}(b_{T}b_{F_i}\mathbb{P}_{1}(F_i))
\oplus\Oplus_{i=0}^{2}(b^2_{T}b_{F_i}\mathbb{P}_{0}(F_i)).
\end{align*}
Then $\dim W(T) = 19$.
The DoFs for $W(T)$ are given by
\begin{subequations}\label{2d-wdof}
\begin{align}
\label{2d-wdof1}
v(\texttt{v}), \nabla v(\texttt{v}), & \quad \texttt{v}\in\Delta_0(T),\\
\label{2d-wdof2}
\int_{F}\partial_nv \, q \ds, & \quad q\in\mathbb{P}_1(F), F\in\Delta_{1}(T),\\
\label{2d-wdof3}
\int_{F}\partial_{nn}v \ds, & \quad F\in\Delta_{1}(T),\\
\label{2d-wdof4}
\frac 13\sum_{i=0}^{2}(\Delta v)(\texttt{v}_{i}).&
\end{align}
\end{subequations}
\begin{lemma}
\label{2d-w-dof-uni-solvent}
The DoFs \eqref{2d-wdof} are unisolvent for $W(T)$.
\end{lemma}
\begin{proof}
The number of the DoFs \eqref{2d-wdof} is $9+6+3+1=19=\dim W(T).$

Take $ v\in W(T)$ and assume all the DoFs \eqref{2d-wdof} vanish, then we prove $v=0$. Notice that $v|_F\in\mathbb{P}_{3}(F)$ for each $F\in\Delta_{1}(T)$, we get from the vanishing DoFs (\ref{2d-wdof1}) that $v|_{\partial T} = 0$. 
Then $\curl v\in H_0(\div,T)$. By comparing DoFs \eqref{2d-wdof} with DoFs \eqref{dof}, we conclude $\curl v=0$ from Lemma~\ref{dof-uni-solvent}, $(\curl v)\cdot\boldsymbol{t}=-\partial_nv$, and $\rot(\curl v)=-\Delta v$.
Thus, $v=0$.
\end{proof}

Define the global $H^3$-nonconforming finite element space:
\begin{align*}
W_{h}&=\{v_{h}\in L^{2}(\Omega):v_h|_T\in W(T)~\textrm{for}~T\in\mathcal{T}_h; \textrm{all the DoFs \eqref{2d-wdof1}-\eqref{2d-wdof3}} \\
&\qquad\quad\textrm{are~single-valued, and DoFs \eqref{2d-wdof1}-\eqref{2d-wdof2} on boundary vanish}\}.
\end{align*}
We have $W_h\subset H_0^1(\Omega)$ and $W_h\not\subseteq H_0^2(\Omega)$.

\begin{lemma}
The nonconforming discrete complex
\begin{equation}
\label{2dcomplex-2}
0\xrightarrow{\subset} W_h\xrightarrow{\curl}  V_h
\xrightarrow{\div} Q_h \xrightarrow {}0
\end{equation}
is exact.
\end{lemma}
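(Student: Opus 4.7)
The strategy is the standard one for a short discrete complex: we verify (a) $\curl$ maps $W_h$ into $V_h$ with $\div\circ\curl=0$ on $W_h$; (b) $\curl$ is injective on $W_h$; (c) $\div\colon V_h\to Q_h$ is surjective, which is already \eqref{divontodiscrete}; and (d) the image of $\curl$ coincides with the kernel of $\div$, which we reduce to a dimension count once (a)--(c) are in hand.

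Step (a), $\curl W_h\subseteq V_h$, is the most delicate and splits into two substeps. On the shape-function level, $\curl\mathbb{P}_3(T)\subset\mathbb{P}_2(T;\mathbb{R}^2)$, and by the two-dimensional identifications in the Remark following \eqref{eq:divVT}, the bubble pieces $\curl(b_Tb_{F_i}\mathbb{P}_1(F_i))$ and $\curl(b_T^2b_{F_i}\mathbb{P}_0(F_i))$ coincide exactly with the corresponding divergence-of-skew bubble summands of $V(T)$; hence $\curl W(T)\subseteq V(T)$. For DoF compatibility, I will show that each DoF of $V_h$ evaluated on $\curl v$ is determined by single-valued DoFs of $W_h$: at a vertex $(\curl v)\cdot\boldsymbol{n}_i^e$ equals (up to sign) a component of $\nabla v(\texttt{v})$, so \eqref{dof1} matches \eqref{2d-wdof1}; along an edge $(\curl v)\cdot\boldsymbol{n}|_F=-\partial_t v$ integrates to the difference of endpoint values, single-valued from \eqref{2d-wdof1}, handling \eqref{dof2}; $(\curl v)\cdot\boldsymbol{t}|_F=-\partial_n v$ pairs \eqref{dof3} with \eqref{2d-wdof2}; $\div\curl v\equiv 0$ makes \eqref{dof4} vanish automatically; $\partial_n((\curl v)\cdot\boldsymbol{t})=-\partial_{nn}v$ pairs \eqref{dof5} with \eqref{2d-wdof3}; and the single nontrivial entry of $\skw\nabla(\curl v)$ equals $\tfrac12\Delta v$, pairing \eqref{dof6} with \eqref{2d-wdof4}. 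The boundary DoFs of $V_h$ vanish because the corresponding boundary DoFs of $W_h$ do — in particular \eqref{dof2} on a boundary edge uses that $v$ vanishes at both endpoints.

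The remaining steps are brief. For (b), if $v\in W_h$ satisfies $\curl v=0$, then since $W_h\subset H^1_0(\Omega)$ and $\Omega$ is connected, $v$ is a constant vanishing on $\partial\Omega$, hence $v\equiv 0$. The pointwise identity $\div\curl v=0$ on each element, combined with the $H(\div)$-conformity of $V_h$, gives $\curl W_h\subseteq\ker(\div|_{V_h})$. For (d) a DoF count yields
$$\dim W_h=3\#\mathring{\mathcal{V}}_h+2\#\mathring{\mathcal{F}}_h+\#\mathcal{F}_h+\#\mathcal{T}_h,$$
$$\dim V_h=2\#\mathring{\mathcal{V}}_h+3\#\mathring{\mathcal{F}}_h+2\#\mathcal{F}_h+\#\mathcal{T}_h,$$
$$\dim Q_h=\#\mathcal{F}_h+\#\mathcal{T}_h-1.$$
For a simply connected polygonal $\Omega$, Euler's formula gives $\#\mathring{\mathcal{V}}_h-\#\mathring{\mathcal{F}}_h+\#\mathcal{T}_h=1$, hence $\dim W_h+\dim Q_h=\dim V_h$; together with the injectivity of $\curl$ and the surjectivity of $\div$, this forces $\dim\curl W_h=\dim\ker(\div|_{V_h})$ and completes exactness at $V_h$.

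The main obstacle is the bookkeeping in step (a): checking that every DoF of $V_h$ on $\curl v$ is expressible by, and single-valued through, the DoFs of $W_h$, and that the boundary DoFs inherit their vanishing. The dimension count rests on Euler's formula for a simply connected polygon; in the multiply connected case one would expect a harmonic correction to appear on the left end of the complex.
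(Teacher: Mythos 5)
Your proposal is correct, but it follows a different route from the paper's own proof of exactness of \eqref{2dcomplex-2}. The paper argues directly at the level of potentials: given $\boldsymbol{v}_h\in V_h\cap\ker(\div)\subset H_0(\div,\Omega)$, the continuous theory (on a simply connected domain) yields a stream function $w_h\in H_0^1(\Omega)$ with $\curl w_h=\boldsymbol{v}_h$; one then checks $w_h|_T\in W(T)$ and that the single-valuedness/boundary conditions of $W_h$ follow from those of $V_h$, so $w_h\in W_h$ and $V_h\cap\ker(\div)=\curl W_h$, with surjectivity of $\div$ supplied by \eqref{divontodiscrete}. You instead verify the inclusion $\curl W_h\subseteq V_h$ by DoF bookkeeping (pairing \eqref{dof1}--\eqref{dof6} with \eqref{2d-wdof1}--\eqref{2d-wdof4} through $(\curl v)\cdot\boldsymbol{t}=\pm\partial_n v$, $(\curl v)\cdot\boldsymbol{n}=\pm\partial_t v$, $\skw\nabla(\curl v)\leftrightarrow\Delta v$), use injectivity of $\curl$ on $W_h\subset H_0^1(\Omega)$, surjectivity of $\div$ from \eqref{divontodiscrete}, and close the argument by a dimension count with Euler's formula; your formulas $\dim W_h=3\#\mathring{\mathcal{V}}_h+2\#\mathring{\mathcal{F}}_h+\#\mathcal{F}_h+\#\mathcal{T}_h$, $\dim V_h=2\#\mathring{\mathcal{V}}_h+3\#\mathring{\mathcal{F}}_h+2\#\mathcal{F}_h+\#\mathcal{T}_h$ and $\dim Q_h=\#\mathcal{F}_h+\#\mathcal{T}_h-1$ are all correct, and $\#\mathring{\mathcal{V}}_h-\#\mathring{\mathcal{F}}_h+\#\mathcal{T}_h=1$ does give $\dim W_h+\dim Q_h=\dim V_h$. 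This is in fact the strategy the paper itself uses for the three-dimensional complex \eqref{3dcomplex-2}, so your argument is a legitimate and more self-contained alternative: it avoids invoking the continuous stream-function result and the (tacit) verification that the potential inherits the $W_h$ continuity, at the cost of the combinatorial count and an explicit appeal to simple connectivity — which you rightly flag, and which the paper's potential argument also needs implicitly.
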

\begin{proof}
Clearly, the sequence \eqref{2dcomplex-2} is a complex.

For $\boldsymbol{v}_h\in V_h\cap\ker(\div)\subset H_0(\div,\Omega)\cap\ker(\div)$,
it follows that $\boldsymbol{v}_h=\curl w_h$ with $w_h\in H_0^1(\Omega)$ satisfying $w_h|_T\in W(T)$ for $T\in\mathcal T_h$. Then $w_h\in W_h$. This concludes $V_h\cap\ker(\div)=\curl W_h$.
Then we end the proof by using \eqref{divontodiscrete}.
\end{proof}

Define interpolation operator $I_h^W: W\rightarrow W_h$ as follows:
\begin{align*}
(I_h^Wv)(\texttt{v})&=v(\texttt{v}) \qquad\qquad\qquad\qquad\qquad\;\forall~\texttt{v}\in \mathring{\mathcal{V}}_h, \\
\curl(I_h^Wv)(\texttt{v})&=I_h^V(\curl v)(\texttt{v}) \qquad\qquad\quad\quad\;\,\forall~\texttt{v}\in \mathring{\mathcal{V}}_h, \\
\int_{F}\curl(I_h^Wv) \cdot \boldsymbol{t}\,q\ds&=\int_{F}I_h^V(\curl v) \cdot \boldsymbol{t}\,q\ds \qquad\quad\;\, \forall~q\in\mathbb{P}_1(F), F\in\mathring{\mathcal{F}}_h,\\
\int_{F}\partial_n(\curl(I_h^Wv) \cdot \boldsymbol{t})\ds&=\int_{F}\partial_n(I_h^V(\curl v) \cdot \boldsymbol{t})\ds \qquad\; \forall~F\in\mathcal{F}_h,\\
\sum_{i=0}^{2}(\Delta(I_h^Wv))(\texttt{v}_{i})&=-\sum_{i=0}^{2}(\rot(I_h^V(\curl v)))(\texttt{v}_{i}) \quad \forall~T\in\mathcal{T}_h.
\end{align*}
It can be verified that
\begin{equation}\label{commutative2dcurl}
\curl(I_h^Wv)=I_h^V(\curl v)\quad\forall~ v\in W.
\end{equation}

The combination of complex \eqref{2dcomplex-2}, \eqref{commutative2dcurl} and \eqref{commutative1} yields the commutative diagram
\[
\xymatrix{
0 \ar[r]^{\subset\hspace{1em}} &
W \ar[r]^{\curl\hspace{1em}} \ar[d]^{I^W_h} &
V \ar[r]^{\hspace{0.5em}\div} \ar[d]^{I^{V}_h} &
Q \ar[r] \ar[d]^{I^{Q}_h} & 0\\
0 \ar[r]^{\subset} & W_h \ar[r]^{\curl} &
V_h \ar[r]^{\div} & Q_h \ar[r] & 0. 
}
\]

\subsection{Nonconforming finite element complex in three dimensions}
Now we consider the finite element discretization of the complex \eqref{smoothStokescomplex3d}.

To discretize space $W$,
take the space of shape
functions 
\begin{align*}
W(T):=\mathbb{P}_{3}^-(T;\mathbb{R}^{3})
&\oplus\Oplus_{i=0}^{3}(b_{T}b_{F_i}\mathbb{P}_{1}(F_i)\otimes\mathscr T^{F_i})\oplus\Oplus_{i=0}^{3}(b^2_{T}b_{F_i}\mathbb{P}_{0}(F_i)\otimes\mathscr T^{F_i}),
\end{align*}
where $\mathbb{P}_{3}^-(T;\mathbb{R}^{3})=\nabla\mathbb{P}_{3}(T)\oplus(\mathbb{P}_{2}(T;\mathbb R^3)\times\boldsymbol{x})$, and $\mathscr T^{F_i}$ is the tangent plane of $F_i$.
We have $\dim W(T) = 77$.
The DoFs for $W(T)$ are given by
\begin{subequations}\label{3d-wdof}
\begin{align}
\label{3d-wdof1}
\int_{e}\boldsymbol{v} \cdot\boldsymbol{t}\,q \ds, & \quad q\in\mathbb{P}_2(e), e\in\Delta_{1}(T),\\
\label{3d-wdof2}
\int_{e}\curl\boldsymbol{v} \cdot \boldsymbol{n}_i^e \ds, & \quad e\in\Delta_{1}(T), i=1,2,\\
\label{3d-wdof3}
\int_{F}(\Pi_F\boldsymbol{v}) \cdot \boldsymbol{q} \ds, & \quad \boldsymbol{q}\in\boldsymbol{x}\mathbb{P}_0(F), F\in\Delta_{2}(T),\\
\label{3d-wdof4}
\int_{F}\curl\boldsymbol{v} \cdot\boldsymbol{n} \, q \ds, & \quad q\in\hat{\mathbb{P}}_2(F), F\in\Delta_{2}(T),\\
\label{3d-wdof5}
\int_{F}(\curl\boldsymbol{v}) \cdot \boldsymbol{t}_i \, q \ds, & \quad q\in\mathbb{P}_1(F), F\in\Delta_{2}(T), i=1,2,\\
\label{3d-wdof6}
\int_{F}\partial_{n}((\curl\boldsymbol{v})\cdot \boldsymbol{t}_i) \ds, & \quad F\in\Delta_{2}(T), i = 1,2,\\
\label{3d-wdof7}
\frac{1}{4}\sum_{i=0}^{3}(\curl^2\boldsymbol{v})(\texttt{v}_{i}).&
\end{align}
\end{subequations}

\begin{lemma}
\label{3d-w-dof-uni-solvent}
The DoFs \eqref{3d-wdof} are unisolvent for $W(T)$.
\end{lemma}
\begin{proof}
The number of the DoFs \eqref{3d-wdof} is
\[
6\times(3+2)+4\times(1+2+6+2)+3=77=\dim W(T).
\]

Take $ \boldsymbol{v}\in W(T)$ and assume all the DoFs \eqref{3d-wdof} vanish,
then we prove $\boldsymbol{v}=0$. 
Applying integration by parts, we get from the vanishing DoF \eqref{3d-wdof1} that
\begin{equation*}
\int_{F}\curl\boldsymbol{v} \cdot\boldsymbol{n}\ds=0  \quad \forall~F\in\Delta_{2}(T).
\end{equation*}
Using the vanishing DoFs \eqref{3d-wdof2} and \eqref{3d-wdof4}-\eqref{3d-wdof7}, we apply Lemma~\ref{dof-uni-solvent} to acquire $\curl\boldsymbol{v}=0$.
So $\boldsymbol{v}=\nabla w$ with $w\in\mathbb P_3(T)$ satisfying $w(\texttt{v}_0)=0$.
Finally, the vanishing DoFs \eqref{3d-wdof1} and \eqref{3d-wdof3} imply $w=0$ and $\boldsymbol{v}=0$.
\end{proof}

Define the global $W$-nonconforming finite element space
\begin{align*}
W_{h}&=\{\boldsymbol{v}_{h}\in L^{2}(\Omega;\mathbb{R}^{3}):\boldsymbol{v}_h|_T\in W(T)~\textrm{for}~T\in\mathcal{T}_h; \textrm{all~the~DoFs \eqref{3d-wdof1}-\eqref{3d-wdof6}} \\
&\qquad\qquad\;\;\textrm{are~single-valued, and DoFs \eqref{3d-wdof1}-\eqref{3d-wdof5} on boundary vanish}\}. 
\end{align*}
We have $W_{h}\subset H_0(\curl,\Omega)$, but $W_{h}\not\subseteq W$.

\begin{lemma}
The nonconforming discrete complex
\begin{align}
\label{3dcomplex-2}
0\xrightarrow{\subset}V_h^L\xrightarrow{\nabla} W_h \xrightarrow{\curl}  V_h
\xrightarrow{\div} Q_h \xrightarrow {}0
\end{align}
is exact, where $V^L_h:=\{v_h\in H_0^1(\Omega): v_h|_T\in\mathbb{P}_3(T)~\textrm{for~each}~T\in\mathcal{T}_h\}.$
\end{lemma}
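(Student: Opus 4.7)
The plan is to mirror the two-dimensional argument above, using $\div V_h=Q_h$ from \eqref{divontodiscrete} as the anchor at the right end and working leftward through the complex. First I would verify that \eqref{3dcomplex-2} is a complex: for $v_h\in V_h^L$, one has $\nabla v_h|_T\in\nabla\mathbb{P}_3(T)\subset\mathbb{P}_3^-(T;\mathbb{R}^3)\subset W(T)$, the inclusion $\nabla v_h\in H_0(\curl,\Omega)$ comes from $v_h\in H_0^1(\Omega)$, and the boundary DoFs of $W_h$ vanish as a consequence of the DoFs of $V_h^L$; the other two inclusions $\curl W_h\subseteq V_h$ and $\div V_h\subseteq Q_h$ are already recorded, and the vanishing of consecutive compositions is classical.

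Injectivity of $\nabla$ on $V_h^L$ is immediate since $V_h^L\subset H_0^1(\Omega)$ forces any gradient-free element to vanish. For exactness at $W_h$, I would take $\boldsymbol{v}_h\in W_h$ with $\curl\boldsymbol{v}_h=0$; since $W_h\subset H_0(\curl,\Omega)$ and $\Omega$ is simply connected, the continuous de Rham complex yields $w\in H_0^1(\Omega)$ with $\boldsymbol{v}_h=\nabla w$. The crucial elementwise claim is $W(T)\cap\ker\curl=\nabla\mathbb{P}_3(T)$: the inclusion $\supset$ is immediate, and the converse follows by combining the standard N\'ed\'elec identity $\mathbb{P}_3^-(T;\mathbb{R}^3)\cap\ker\curl=\nabla\mathbb{P}_3(T)$ with an inspection of the curl of the face-bubble enrichments, which have vanishing tangential trace on $\partial T$. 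Comparing the DoFs of $W_h$ with those of $V_h^L$ then gives $w\in V_h^L$.

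For exactness at $V_h$, I would take $\boldsymbol{v}_h\in V_h$ with $\div\boldsymbol{v}_h=0$; since $V_h\subset H_0(\div,\Omega)$, the continuous complex provides $\boldsymbol{w}\in H_0(\curl,\Omega)$ with $\curl\boldsymbol{w}=\boldsymbol{v}_h$. The local identity $\curl W(T)=V(T)\cap\ker\div$, built into the design of the shape functions and cross-checked by the dimension count $\dim W(T)-\dim\nabla\mathbb{P}_3(T)=77-19=58=\dim V(T)-\dim\div V(T)=63-5$, yields elementwise $\boldsymbol{w}_T\in W(T)$ with $\curl\boldsymbol{w}_T=\boldsymbol{v}_h|_T$; any cross-element discrepancy then lies in $\ker\curl$ and is absorbed using the exactness at $W_h$ from the previous step, producing a global $\boldsymbol{w}_h\in W_h$ with $\curl\boldsymbol{w}_h=\boldsymbol{v}_h$. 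Surjectivity of $\div$ onto $Q_h$ is just \eqref{divontodiscrete}.

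The principal obstacle is the elementwise polynomial exactness, namely $W(T)\cap\ker\curl=\nabla\mathbb{P}_3(T)$ and $\curl W(T)=V(T)\cap\ker\div$, because $\curl$ acts nontrivially on the face-bubble summands $b_Tb_{F_i}\mathbb{P}_1(F_i)\otimes\mathscr{T}^{F_i}$ and $b_T^2b_{F_i}\mathbb{P}_0(F_i)\otimes\mathscr{T}^{F_i}$, so these identities must be argued by a careful DoF-based analysis exploiting the tangential vanishing of the bubbles. Once these local exactness statements are settled, the global assertion follows in a routine way, and the dimension count serves as a convenient consistency check.
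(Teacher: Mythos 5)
Your treatment of the complex property, of the surjectivity of $\div$ via \eqref{divontodiscrete}, and of exactness at $W_h$ (continuous potential in $H_0^1(\Omega)$ plus the elementwise fact that curl-free elements of $W(T)$ are gradients of $\mathbb{P}_3(T)$) matches the paper. The genuine gap is in your argument for exactness at $V_h$. Having a continuous potential $\boldsymbol{w}\in H_0(\curl,\Omega)$ and elementwise potentials $\boldsymbol{w}_T\in W(T)$ with $\curl\boldsymbol{w}_T=\boldsymbol{v}_h|_T$ does not yet produce an element of $W_h$: the piecewise field $(\boldsymbol{w}_T)_T$ need not have single-valued DoFs \eqref{3d-wdof1} and \eqref{3d-wdof3} (the curl-type DoFs \eqref{3d-wdof2}, \eqref{3d-wdof4}--\eqref{3d-wdof6} are fine because they only see $\curl\boldsymbol{w}_T=\boldsymbol{v}_h$), and the discrepancy $\boldsymbol{w}|_T-\boldsymbol{w}_T=\nabla\phi_T$ involves non-polynomial $\phi_T\in H^1(T)$. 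Saying the discrepancy ``lies in $\ker\curl$ and is absorbed using the exactness at $W_h$'' is not an argument: exactness at $W_h$ concerns curl-free functions that already belong to $W_h$, whereas what you need is to modify each $\boldsymbol{w}_T$ by $\nabla\psi_T$, $\psi_T\in\mathbb{P}_3(T)$, so that the inter-element DoFs match and the boundary DoFs vanish --- a global compatibility (discrete cohomology) problem that does not follow from the previous step. In addition, the two local identities you rely on, $W(T)\cap\ker\curl=\nabla\mathbb{P}_3(T)$ and $\curl W(T)=V(T)\cap\ker\div$, are only flagged as ``to be argued,'' and your $77-19=63-5$ count is a consistency check, not a proof.

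The paper closes exactness at $V_h$ by an entirely global dimension count that sidesteps both issues: from $\div V_h=Q_h$ one gets $\dim(V_h\cap\ker\div)=\dim V_h-\dim Q_h$; from $W_h\cap\ker\curl=\nabla V_h^L$ one gets $\dim\curl W_h=\dim W_h-\dim V_h^L$; and the difference of these two counts equals $\#\mathring{\mathcal{V}}_h-\#\mathring{\mathcal{E}}_h+\#\mathring{\mathcal{F}}_h-\#\mathcal{T}_h+1=0$ by Euler's formula, so the inclusion $\curl W_h\subseteq V_h\cap\ker\div$ is an equality. If you want to keep your local-to-global route, you must either prove the gluing step directly (show the tangential-trace mismatches can be removed by local $\mathbb{P}_3$ gradients, which is essentially the same cohomological content) or replace it by such a counting argument.
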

\begin{proof}
First, the sequence \eqref{3dcomplex-2} is a complex.
By \eqref{divontodiscrete}, $\div V_h=Q_h$, which means
\begin{equation*}
\dim V_h\cap\ker(\div)=\dim V_h -\dim Q_h=2\#\mathring{\mathcal{E}}_h +9\#\mathring{\mathcal{F}}_h+2\#\mathcal{F}_h+2\#\mathcal{T}_h+1.
\end{equation*}

Next we verify $W_h\cap\ker(\curl)=\nabla V^L_h$.
For $\boldsymbol{w}_h\in W_h\cap\ker(\curl)\subset H_0(\curl,\Omega)\cap\ker(\curl)$,
it follows that $\boldsymbol{w}_h=\nabla v_h$ with $v_h\in H_0^1(\Omega)$ satisfying $v_h|_T\in \mathbb{P}_3(T)$ for $T\in\mathcal T_h$. Then $v_h\in V^L_h$, and $\boldsymbol{w}_h\in\nabla V^L_h$.
As a result,
\begin{equation*}
\dim\curl W_h=\dim W_h-\dim V^L_h= 3\#\mathring{\mathcal{E}}_h+8\#\mathring{\mathcal{F}}_h+2\#\mathcal{F}_h+3\#\mathcal{T}_h-\#\mathring{\mathcal{V}}_h.
\end{equation*} 

Now we have
\begin{equation*}
\dim V_h\cap\ker(\div) - \dim\curl W_h= \#\mathring{\mathcal{V}}_h-\#\mathring{\mathcal{E}}_h +\#\mathring{\mathcal{F}}_h-\#\mathcal{T}_h+1,
\end{equation*}
by the Euler's formula, which equals $0$. Therefore, $V_h\cap\ker(\div)=\curl W_h$.
\end{proof}

Let $I_h^L: H_0^1(\Omega)\to V_h^L$ be the Scott-Zhang interpolation operator \cite{ScottZhang1990}.
We follow the idea in \cite[Section 4.2]{MR4621133} to construct an interpolation operator $I_h^W: W\to W_h$. First define an interpolation operator $\widetilde{I}_h^W: W\cap H_0^2(\Omega;\mathbb R^3)\to W_h$ as follows: 
\begin{align*}
\int_{e}(\widetilde{I}_h^W\boldsymbol{v})\cdot\boldsymbol{t}\,q \ds &= \int_{e}\boldsymbol{v} \cdot\boldsymbol{t}\,q \ds, \qquad\qquad\quad\;\;\;\forall~q\in\mathbb{P}_2(e), e\in\mathring{\mathcal{E}}_h,\\
\int_{e}\curl(\widetilde{I}_h^W\boldsymbol{v}) \cdot \boldsymbol{n}_i^e \ds &= \int_{e}I_h^V(\curl\boldsymbol{v}) \cdot \boldsymbol{n}_i^e \ds, \quad\;\;\;\,\forall~e\in\mathring{\mathcal{E}}_h, i=1,2,\\
\int_{F}(\Pi_F(\widetilde{I}_h^W\boldsymbol{v})) \cdot \boldsymbol{q} \ds &= \int_{F}(\Pi_F\boldsymbol{v}) \cdot \boldsymbol{q} \ds, \qquad\qquad\forall~ \boldsymbol{q}\in\boldsymbol{x}\mathbb{P}_0(F), F\in\mathring{\mathcal{F}}_h,\\
\int_{F}\curl(\widetilde{I}_h^W\boldsymbol{v}) \cdot\boldsymbol{n} \, q \ds &= \int_{F}I_h^V(\curl\boldsymbol{v}) \cdot\boldsymbol{n} \, q \ds,  \quad \;\;\forall~q\in\hat{\mathbb{P}}_2(F), F\in\mathring{\mathcal{F}}_h,\\
\int_{F}\Pi_F\curl(\widetilde{I}_h^W\boldsymbol{v}) \cdot \boldsymbol{q} \ds &= \int_{F}\Pi_FI_h^V(\curl\boldsymbol{v}) \cdot \boldsymbol{q}_i \ds,  \;\;\forall~ \boldsymbol{q}\in\mathbb{P}_1(F;\mathbb R^2), F\in\mathring{\mathcal{F}}_h,\\
\int_{F}\partial_{n}(\Pi_F\curl(\widetilde{I}_h^W\boldsymbol{v})) \ds &= \int_{F}\partial_{n}(\Pi_FI_h^V(\curl\boldsymbol{v})) \ds,  \;\forall~F\in\mathcal{F}_h, \\
\sum_{i=0}^{3}(\curl^2(\widetilde{I}_h^W\boldsymbol{v}))(\texttt{v}_{i}) &=\sum_{i=0}^{3}(\curl I_h^V(\curl\boldsymbol{v}))(\texttt{v}_{i}), \;\;\forall~T\in\mathcal{T}_h.
\end{align*}
It can be verified that
\begin{equation}\label{commutative3dcurl0}
\curl(\widetilde{I}_h^W\boldsymbol{v})=I_h^V(\curl\boldsymbol{v})\quad\forall~\boldsymbol{v}\in W\cap H_0^2(\Omega;\mathbb R^3).
\end{equation}

Applying Lemma 4.5 in \cite{MR4621133}, we have the regular decomposition
\begin{equation}\label{Wregulardecomp}
W = (W\cap H_0^2(\Omega;\mathbb R^3)) + \nabla H_0^1(\Omega).
\end{equation}
For $\boldsymbol{v}\in W$, by the regularity decomposition \eqref{Wregulardecomp}, write $\boldsymbol{v}=\boldsymbol{v}_2+\nabla v_1$ with $\boldsymbol{v}_2\in W\cap H_0^2(\Omega;\mathbb R^3)$ and $v_1\in H_0^1(\Omega)$. Here $\boldsymbol{v}_2$ satisfies equation (4.19) in \cite{MR4621133}.
When $\boldsymbol{v}\in\nabla H_0^1(\Omega)$, $\boldsymbol{v}_2=0$.
Define the interpolation 
\[
I_h^W\boldsymbol{v}=\widetilde{I}_h^W\boldsymbol{v}_2+\nabla(I_h^Lv_1).
\]
By the definition of $I_h^W$,
\begin{equation}\label{commutative3dgrad}
I_h^W(\nabla v) = \nabla(I_h^Lv)\quad\forall~v\in H_0^1(\Omega).
\end{equation}
It follows from \eqref{commutative3dcurl0} that
\begin{equation}\label{commutative3dcurl}
\curl(I_h^W\boldsymbol{v})=\curl(\widetilde{I}_h^W\boldsymbol{v}_2)=I_h^V(\curl\boldsymbol{v})\quad\forall~\boldsymbol{v}\in W.
\end{equation}
The combination of complex \eqref{3dcomplex-2}, \eqref{commutative1} and \eqref{commutative3dgrad}-\eqref{commutative3dcurl} gives the commutative diagram
\[
\xymatrix{
0 \ar[r]^{\subset} &
H_0^1(\Omega) \ar[r]^{\nabla} \ar[d]^{I^L_h} &
W \ar[r]^{\curl} \ar[d]^{I^{W}_h} &
V \ar[r]^{\div} \ar[d]^{I^{V}_h} &
Q \ar[r] \ar[d]^{I^{Q}_h} & 0\\
0 \ar[r]^{\subset} & V^L_h \ar[r]^{\nabla} &
W_h \ar[r]^{\curl} &  V_h \ar[r]^{\div} & Q_h \ar[r] & 0. 
}
\]

\bibliographystyle{abbrv} 
\bibliography{ref} 
\end{document}